
\documentclass[10pt,journal,compsoc]{IEEEtran}
%


%

\usepackage{hyperref}

\usepackage{xcolor}
\usepackage[american]{babel}
\usepackage{epsfig}
\usepackage{color}
\usepackage{amsthm,amsfonts,amsmath,amssymb,amstext,latexsym}
\usepackage{color}

\allowdisplaybreaks

\newtheorem{theorem}{Theorem}
\newtheorem{lemma}{Lemma}

\newtheorem{remark}{Remark}

\newtheorem{proposition}{Proposition}
\newtheorem{definition}{Definition}

\newtheorem{assumption}{Assumption}

\newcommand{\bydef}{:=}
\newcommand{\E}{\mathbb{E}}

\newcommand{\SO}{\mathcal{S}_{{\rm subopt}}}

\newcommand{\pp}{\mathbb{P}}
\newcommand{\I}[1]{\mathbb{I}_{\{#1\}}}
\newcommand{\II}[2]{\mathbf{1}_{#1}^{#2}}

%
\ifCLASSOPTIONcompsoc
  \usepackage[nocompress]{cite}
\else
  \usepackage{cite}
\fi
%

%
\ifCLASSINFOpdf
\else
\fi
\hyphenation{a net-works semi-conduc-tor auto-scaling}


\newif\ifSOC
\SOCfalse

\ifSOC
    \newcommand{\review}[1]{{\color{blue}#1}}
\else
    \newcommand{\review}[1]{{\color{black}#1}}
\fi


\begin{document}
%
\title{Asynchronous Load Balancing and Auto-scaling: Mean-Field Limit and Optimal Design}
%
%
%
%

\author{Jonatha~Anselmi
\IEEEcompsocitemizethanks
{
\IEEEcompsocthanksitem J. Anselmi is with
Univ. Grenoble Alpes, CNRS, Inria, Grenoble INP, LIG, 38000 Grenoble, France.
E-mail: jonatha.anselmi@inria.fr
}
}

\IEEEtitleabstractindextext{%
\begin{abstract}
We develop a Markovian framework for load balancing that combines classical algorithms such as Power-of-$d$ with auto-scaling mechanisms that allow the net service capacity to scale up or down in response to the current load on the same timescale as job dynamics. Our framework is inspired by serverless platforms, such as Knative, where servers are software functions that can be flexibly instantiated in milliseconds according to scaling rules defined by the users of the serverless platform. The main question is how to design such scaling rules to minimize user-perceived delay performance while ensuring low energy consumption. For the first time, we investigate this problem when the auto-scaling and load balancing processes operate \emph{asynchronously} (or \emph{proactively}), as in Knative. In contrast to the synchronous (or reactive) paradigm, asynchronism brings the advantage that jobs do not necessarily need to wait any time a scale-up decision is taken.

In our main result, we find a general condition on the structure of scaling rules able to drive mean-field dynamics to delay and relative energy optimality, i.e., a situation where both the user-perceived delay and the relative energy waste induced by idle servers vanish in the limit where the network demand grows to infinity in proportion to the nominal service capacity. The identified condition suggests to scale up the current net capacity if and only if the mean demand exceeds the rate at which servers become idle and active.
Finally, we propose a family of scaling rules that satisfy our optimality condition.  Numerical simulations demonstrate that these rules provide better delay performance than existing synchronous auto-scaling schemes while inducing almost the same power consumption.
\end{abstract}

\begin{IEEEkeywords}
Load balancing, auto-scaling, serverless computing, asymptotic optimality, Knative
\end{IEEEkeywords}}

\maketitle

\IEEEdisplaynontitleabstractindextext

%
\IEEEpeerreviewmaketitle

\IEEEraisesectionheading{\section{Introduction}\label{sec:introduction}}

%
%
%
%



\IEEEPARstart{L}{oad} balancing
is the process of distributing work units (jobs) over a set of distributed computational resources (servers) for processing.
In large architectures, each server has its own queue, as this enhances scalability, and jobs are irrevocably dispatched to one out of~$N$ parallel servers instantaneously upon their arrival.
%
%
Given the stringent latency requirements of modern applications, breaches of which can severely impact revenue,
load balancing techniques are designed to optimize user-perceived delay performance and popular examples are Power-of-$d$ \cite{Mitzenmacher2001} and Join-the-Idle-Queue (JIQ) \cite{Lu2011}.

Closely related to load balancing, \emph{auto-scaling} is a term often used in cloud computing
to refer to the process of adjusting
the current service capacity
automatically in response to the current load~\cite{Buyya2018}.
Auto-scaling mechanisms are meant to control the current net capacity over time to avoid
performance degradation, which yields unacceptably large delays, and
overprovisioning of resources, which yields high infrastructure and energy costs.
%
%
Google Cloud Run,
Amazon Elastic Compute Cloud (EC2), Microsoft Windows Azure and Oracle Cloud Platform are examples of platforms that offer auto-scaling and load balancing features.
Users of these platforms deploy their applications with some control on how the system should scale up resources in front of an increased load. Modern auto-scaling mechanisms are extremely reactive in the sense that they control the current net capacity relying on fresh observations of the system state rather than historical data.
This especially holds true in \emph{serverless computing platforms}, or Function-as-a-Service, which nowadays provide the convenient solution to deploy any type of application or backend service~\cite{scaleperrequest}.

In this paper, we are interested in the interplay between the load balancing and auto-scaling processes.
The main objective is to design a scheme that combines both to minimize delay performance while ensuring low energy consumption.

\subsection{Timescale Separation}

Most of the existing performance models for load balancing assume that the available service capacity remains constant over time~\cite{van2018scalable}, i.e.,
auto-scaling is not taken into account.
Nonetheless, auto-scaling mechanisms are widely employed by cloud applications and affect delay performance.
This does not mean that classic load balancing models are inadequate for cloud systems but simply that they assume that auto-scaling operates at a much slower \emph{timescale} than load balancing.
Essentially, this means that jobs do not see any change in the available capacity because they evolve much faster than servers.
This makes sense if servers are interpreted as physical or even virtual machines because setup times are of the order of minutes if not longer~\cite{Gandhi13}
%
while in typical applications hosted in cloud networks job service times are about ten milliseconds~\cite{scaleperrequest}.
The large body of literature on load balancing, reviewed in Section~\ref{sec:RW}, is undoubtedly the proof that this {timescale separation} assumption is well accepted for several systems.
In the context of serverless computing however,
a server is interpreted as a software function that can be flexibly instantiated in milliseconds \cite{Peeking18,Peeking18}, i.e., within a time window that is comparable with the magnitude of job inter-arrival and service times, and with negligible switching costs.
Here,
auto-scaling mechanisms are extremely reactive and the decisions of turning servers on or off are based on instantaneous observations of the current system state rather than on the long-run equilibrium behavior.
Therefore,
the timescale separation assumption above becomes questionable, as also discussed in~\cite{scaleperrequest},
because
it would mean to assume that job dynamics achieve stochastic equilibrium between consecutive changes of the net service capacity, i.e., in milliseconds.
%
%
%
%
%
%

\subsection{Getting Rid of the Timescale Separation Assumption}

While a large body of the literature investigates load balancing and auto-scaling \emph{separately} \cite{van2018scalable,Buyya2018}, little has been done when both are applied jointly within the same timescale.
Existing works focus on \emph{synchronous} (i.e., both scale-up and dispatching decisions are taken at the same time) or \emph{centralized} (i.e., all servers share a common queue) architectures~\cite{Gandhi13,scaleperrequest}.
%
%
For scalability reasons however,
no central queue is maintained {(in this case, we say that the architecture is \emph{decentralized})}
and no decisions are taken synchronously
in massive cloud systems;
see Section~\ref{sec:sync_vs_async} below.
A decentralized but synchronous architecture where JIQ is synchronized with an ad-hoc auto-scaling strategy is considered in \cite{elasticSIG,Jonckheere18,Clausen}.
%
%
In contrast,
we consider a decentralized and \emph{asynchronous} architecture, where the term ``asynchronous'
means that scaling and dispatching decisions are decoupled.



\subsection{Synchronous vs Asynchronous in Serverless Computing}
\label{sec:sync_vs_async}


The load balancing and auto-scaling processes of existing implementations of public serverless computing platforms are either ``synchronous'' or ``asynchronous''; this terminology is borrowed from the cloud computing community~\cite{scaleperrequest}, though some works use the terms ``reactive'' and ``proactive'', respectively~\cite{DOGANI2024104837}.
As explained in these references,

\begin{itemize}

\item

The auto-scaling principle underlying a synchronous architecture is that
\emph{a new server is turned on at the arrival time of a job if the job itself finds all servers busy}.
The drawback of this approach is that all jobs that have triggered a scale-up signal are forced to wait before being processed.
In centralized implementations,
each of these jobs waits for the activation of the server that has been launched at the moment of its arrival (coldstart latency) \cite{254430,Peeking18,scaleperrequest},
while in the decentralized proposals given in \cite{elasticSIG,Jonckheere18,Clausen}, each of these is sent to an already active (busy) server chosen at random,
hence slowed down by the jobs ahead.

To the best of our knowledge, no synchronous-decentralized implementations currently exist.
In contrast, AWS Lambda, Azure Functions, IBM Cloud Functions and Apache OpenWhisk are examples of synchronous-centralized platforms.

\item

The auto-scaling principle underlying an asynchronous architecture is that
\emph{the load balancing and auto-scaling processes are decoupled}.
Specifically, a job is dispatched to some running server immediately upon its arrival  according to some load balancing algorithm and,
independently of this, an auto-scaling mechanism decides whether the current processing capacity should change as a function of user-defined metrics that may depend on instantaneous observations of the current system state \cite{scaleperrequest}.
Because of this decoupling, scale-up decisions do not need to wait that all active servers are busy as in the synchronous approach. Thus, they may anticipate the arrival of a job and overcome the intrinsic drawback of the synchronous approach described above.
In addition, the scale-up decision rate is fine-tuned by the platform user; in Knative, this is set via the \texttt{max-scale-up-rate} global key.

To the best of our knowledge, no asynchronous-centralized implementations currently exist.
In contrast, Google Cloud Run and Knative are examples of asynchronous-decentralized platforms~\cite{KnativeDocs}.

\end{itemize}

In a stochastic and dynamic setting, no performance model/analysis is available in the literature for the asynchronous-decentralized approach. Our  main motivation is to contribute to fill this gap.

\subsection{Summary of our Contributions}

We develop a Markovian framework for load balancing that includes asynchronous auto-scaling mechanisms.
We refer to this framework as `Asynchronous Load Balancing and Auto-scaling' (ALBA).
%
%
Two (asynchronous) mechanisms drive dynamics in ALBA:
\begin{itemize}


\item[i)]  a \emph{dispatching rule}, or load balancing rule, which defines how jobs are dispatched among the set of active servers as they join the system, and

 \item[ii)] a \emph{scaling rule}, which defines how the number of active servers scales up and down over time, possibly as a function of the current system state.

 \end{itemize}
%
%
%
The dispatching rules included in ALBA are Join-Below-Threshold-$d$ (JBT-$d$), which is a generalization of JIQ, and Power-of-$d$; in fact, these are the rules used in Knative~\cite{KnativeLB}.
We also assume that a server is turned off only if it remains idle during an expiration window.
This scale-down rule is commonly used in practice~\cite{scaleperrequest,KnativeDocs} and also known as ``delay-off''~\cite{Gandhi13}.
%
In contrast, we do not impose any particular structure on scale-up rules because they are usually defined by the user of the serverless platform.
%
%
%
Having fixed the scale-down rule, in the following the term ``scaling rule'' refers to a scale-up rule.

%

Our key technical contribution is a general condition on the structure of scaling rules that is able to drive the mean-field dynamics induced by ALBA to \emph{delay and relative energy optimality}, a situation where the user-perceived delay  and the relative energy wastage induced by idle servers vanish.
This condition suggests to scale up capacity if and only if the mean demand exceeds the overall rate at which servers become idle and active, which can be measured.

We also propose \emph{Rate-Idle}, see Definition~\ref{blind}, a scaling rule that satisfies our optimality condition.
{Provided that it is combined with JIQ, we show by means of numerical simulations that Rate-Idle
provides a better delay performance than the synchronous schemes in~\cite{elasticSIG,Jonckheere18} while inducing the same energy consumption cost.
We own this gain to the fact that scale up decisions may be taken before job arrivals, while in a synchronous scheme such as TABS-$d$, jobs are forced to wait any time a scale up decision is taken.
}

Our results are obtained through a rigorous analysis of the underlying Markov process in the mean-field limit.
Here, we establish the convergence of the stochastic finite model to a fluid model with a discontinuous drift.
Then, we leverage the fluid model to identify a condition that drives the fluid trajectories to a unique fixed point corresponding to delay and relative energy optimality.

\subsection{Organization and Main Results Detailed}


Section~\ref{sec:RW} reviews the existing literature and Section~\ref{sec:ALBA} introduces ALBA by defining a stochastic (intractable) and a deterministic (tractable) model to describe its dynamics.
Section~\ref{sec:main_results} presents our main results, i.e., Theorems~\ref{th1}, \ref{th2} and~\ref{th3}:


\begin{itemize}


\item Theorem~\ref{th1} connects the stochastic and the deterministic models
and justifies the use of the latter to approximate the dynamics of the former.
This enables analytical tractability and allows one to study dynamics easily.
%
We prove Theorem~\ref{th1} following the framework developed in~\cite{tsitsiklis2012,Bramson1998}, though we develop ad-hoc arguments to handle the discontinuities of the drift function of the underlying Markov chain.

 \item
Theorem~\ref{th2} characterizes the \emph{fixed points} of the deterministic model in terms of a set of non-linear equations. It also provides a simple necessary and sufficient condition able to tell whether or not the nominal service capacity will be needed to handle the incoming demand.
Within Power-of-$d$, roughly speaking, there always exists a unique fixed point if the scaling rule is ``nice''.
Within JBT-$d$ however, uniqueness is guaranteed only if the scaling rule has access to the number of servers containing exactly one job (see Remark~\ref{sc8as9s}).



\item
Theorem~\ref{th3} investigates how to design \emph{optimal} and {globally stable} scaling rules. More specifically, we identify a general condition ensuring that dynamics of the deterministic model converge to delay and relative energy optimality.
We show that optimality can only be achieved within JIQ (or equivalently JBT-$0$), though in practice this may not be the convenient choice within architectures with several dispatchers. In this case, an exact implementation of JIQ would imply an expensive communication overhead per job and Power-of-$d$ may be the way to go as it does not require the dispatcher(s) to store information about the server states.

\end{itemize}

%
%





Section~\ref{sec:empirical} compares by simulation the asynchronous and synchronous approaches, showing that the former provides a much better delay performance.
Then, Section~\ref{sec:numerical}
develops a tractable optimization framework to illustrate how the results presented in this paper can be applied to trade off between performance and energy consumption.
Finally, Section~\ref{sec:conclusions} draws the conclusions.
Proofs of our results are deferred to the appendix.

\section{Literature review}
\label{sec:RW}


The existing literature related to load balancing and auto-scaling is huge and our goal is to provide the necessary background highlighting the difference of our work.

\subsection{Load Balancing and the Zero Delay Property}

Popular examples of load balancing algorithms that work well when servers are homogeneous, i.e., all servers have the same processing speed, are Random, Round-Robin (RR) \cite{LR98,anselmi:hal-01614892}, Power-of-$d$ \cite{Mitzenmacher2001}, Join-the-Idle-Queue (JIQ) \cite{Lu2011}, Least-Left-Workload (LLW) and Size Interval Task Allocation (SITA) \cite{anselmi2019b,HarcholBalter2009,SITAE}.
Random sends each job to random server,
RR sends jobs to servers in a cyclic manner,
Power-of-$d$ sends an incoming job to the least loaded server among $d$ selected uniformly at random.
JIQ sends an incoming job to a random idle server if an idle server exists and to a random one otherwise,
LLW sends an incoming jobs to the queue having the shortest workload, and
SITA sends a job to a given server if its size belong to a given interval.
In general, it is not possible to identify which of these algorithms is the best because the general answer depends on the underlying architecture, load conditions, service time distribution and on the amount of information available to the dispatcher~\cite{van2018scalable}.
%
%


Recently, a number of works attempted to understand under which conditions the mean waiting time can be driven down to zero in the limiting regime where the arrival rate grows linearly with the number of servers while keeping the average load below one.
This is possible within different load balancing schemes and architectures.
Examples include JIQ \cite{Stolyar2015}, Power-of-$d$ with $d\to\infty$ as the network size grows to infinity \cite{Borst2016}, Power-of-$d$ with memory \cite{anselmi2019}, SITA combined with RR~\cite{Anselmi2020} and the pull-based policies developed in~\cite{Gamarnik2016,borst_hyper}.
To some extent, the fundamental limits of load balancing are described in~\cite{Gamarnik2016}, where the authors investigate trade-offs between performance (the zero-delay property), communication overhead and memory within a certain class of symmetric architectures and the large-system limiting regime.

\subsection{Joint Load Balancing and Auto-scaling}

The load balancing algorithms above have been analyzed under the assumption that the active number of servers is \emph{constant} at all times.
Few works considered a time-varying net capacity~\cite{elasticSIG,Jonckheere18,Clausen,DebankurStolyar}.
In these references,
JIQ is synchronized with a specific auto-scaling strategy as described in Section~\ref{sec:sync_vs_async}.
When the traffic demand and the nominal service capacity proportionally grow to infinity,
the mechanism proposed in \cite{elasticSIG} yields the zero-delay property but also deactivates any surplus idle servers,
thus inducing delay and relative energy optimality.
This property has been strengthened in~\cite{DebankurStolyar}, where the authors relax some finite buffer assumptions.
In contrast, our work shows optimality:
\begin{itemize}
 \item
within an asynchronous (see Section~\ref{sec:sync_vs_async}) architecture; an advantage of asynchronism is that jobs do not necessarily need to wait any time a scale-up decision is taken, a fact whose performance gain is evaluated in Section~\ref{sec:empirical} by simulation;

\item without limiting on an ad-hoc auto-scaling strategy; rather, we identify a structural property on scaling rules that induces optimality under broader conditions (Theorem~\ref{th3}).
\end{itemize}

\section{Asynchronous Load Balancing and Auto-scaling (ALBA)}
\label{sec:ALBA}


In this section, we first describe the main principles at the basis of Asynchronous Load Balancing and Auto-scaling (ALBA).
Since our aim is to develop a model tailored to serverless computing,
we will make several references to Knative, a popular serverless framework for hosting Function-as-a-Service processing that is used, among others, by Google Cloud Run.
%
Then,
we propose two performance models for ALBA.
The first is meant to capture the stochastic nature of the underlying dynamics
while the second is deterministic and will serve to approximate the dynamics induced by the first.
The advantage of the deterministic model is its tractability.
Finally, we formalize the structure of the scaling rules investigated in this paper.


\subsection{System Description}
\label{sec:sys_desc}

The proposed framework, ALBA, is composed of a system of $N$ parallel {servers}, each with its own queue, that represent the nominal service capacity, i.e., the upper limit on the amount of resources that one user can have up and running at the same time\footnote{In Knative, this upper limit is specified by the \texttt{max-scale-limit} global key.}.
In the cloud computing community, servers are also referred to as containers, cloud functions, instances or replicas.
Public serverless computing platforms usually require to specify such limit in order to ensure service availability for other users.
In the following, the terms servers and queues will be used interchangeably.
A server is said \emph{warm} if turned on, \emph{cold} if turned off and \emph{initializing}
if making the transition from cold to warm.
These are the possible server states~\cite{Peeking18,Optimizing19,scaleperrequest}.
An initializing server performs basic startup operations such as connecting to database, loading libraries, etc. This is the time to provision a new function instance.
Only warm servers are allowed to receive jobs.
A server is also said \emph{idle-on} if warm but not processing any job, and \emph{busy} if warm and processing some job.
Typically, billing policies charge per number of warm and initializing servers used per time unit.

Jobs join the system from an exogenous source to receive service. Upon arrival, each job is dispatched to a warm server according to some dispatching rule.
After dispatching, each job is processed by the selected server according to the presumed scheduling discipline at that server.
After processing, each job leaves the system.

\begin{assumption}
\label{as_dispatcher}
Jobs are dispatched to servers according to either Power-of-$d$ or Join-Below-Threshold-$d$ (JBT-$d$).
\end{assumption}
We recall that
Power-of-$d$ sends an incoming job to the shortest among $d\ge 1$ warm servers selected at random at the moment of its arrival and
JBT-$d$ sends an incoming job to a warm server containing no more than $d\ge 0$ jobs if one exists otherwise to a warm server selected at random.
In all cases, ties are broken randomly.
If $d=0$, JBT-$d$ is also known as Join-the-Idle-Queue (JIQ) \cite{Lu2011}.
We limit our framework to these types of schemes
because they involve a constant communication overhead per job (in architectures with a single dispatcher) and
because they are commonly used in practice.
For instance, Knative uses Power-of-$2$ if no limit is set on the queue length of each server
and JBT-$d$ if such limit is set to $d$~\cite{KnativeLB}.

Alongside with the above job dynamics, the pools of warm/initializing/cold servers change over time in the background and in an asynchronous manner. Precisely, the platform monitors the system state at some epochs that we refer to as \emph{{scaling} times}.
At such times, a cold server is selected, provided that one exists, and becomes initializing
according to the outcome of some scaling rule.
After some \emph{initialization time}, or coldstart latency, an initializing server becomes idle-on.
When a server becomes idle-on, it becomes cold after a scale down delay, or \emph{expiration time}, if during such time the server received no job; this scale-down rule is used in several serverless computing platforms (including Knative) \cite{Peeking18,SPEC18} and also in other settings \cite{Gandhi13}.
%
%
%
We observe that the number of warm servers fluctuates from 0 to $N$ over time.
While in practice it may be possible to set a lower limit on the number of warm servers,
the scale down to zero (or one) servers configuration is usually the default choice
\cite{Knative-scale-bounds}.


To a great extent, the scale up rule, the expiration rate and the {scaling} times are under the control of the platform user, which may design them in a way to optimize a trade-off between performance and energy.
On the other hand, several measurements indicate that initialization times are typically one order of magnitude higher than jobs' service times in serverless platforms~\cite{scaleperrequest,Peeking18}.


\subsection{Notation}
\label{sec:notation}

We introduce some notation that will be used throughout the paper.
Let $B\in \mathbb{Z}_+\cup\{+\infty\}$ be a constant that will denote the buffer size of each server.
We use $\I{A}$ to denote the indicator function of~$A$.
If $a\in\mathbb{R}$ and $A$ denotes an interval, $\mathbf{1}_{A}^a\bydef \I{a\in A}$.
We also let $(\cdot)^+\bydef \max\{\cdot,0\}$ and $\|\cdot\|$ denotes the $L_1$ norm.
Unless specified otherwise,
 $(i,j)$ ranges over the set $\{0,\ldots,B\}\times\{0,1,2\}$ if $B<\infty$ and over $\mathbb{Z}_+\times\{0,1,2\}$ otherwise.
%
%
%
%
The process of interest will take values in
$\mathcal{S}\bydef
\{ (x_{i,j}\in\mathbb{R}_+, \forall (i,j)) : \sum_{i,j} x_{i,j} =1
\}$
and our analysis holds under the distance function $d_w$ induced by the weighted $\ell_2$ norm $\|\cdot\|_w$ on $\mathbb{R}^{\mathbb{Z}_+}$ defined by
$\|x-x'\|_w^2 \bydef \sum_{i,j} \frac{|x_{i,j}-x_{i,j}'|^2}{2^{i+j}}.$
For $x\in\mathcal{S}$, let $y_i\bydef \sum_{k\ge i} x_{i,2}$.
We also let
$\mathcal{S}_1\bydef \{x\in\mathcal{S}: \sum_{i\ge 1} i x_{i,2}<\infty\}$.

\subsection{Markov Model}
\label{sec:model}

We model the dynamics induced by ALBA in terms of a continuous time Markov chain.
{The exogenous arrival process of jobs is assumed to be Poisson with rate~$\lambda N$, with $0<\lambda<1$.
Our analysis (Theorem~\ref{th1}) generalizes trivially to a time-varying arrival rate, a case that we omit
for clarity of exposition.
We discuss this point in the Conclusions.}
%
%
%
%
%
%
The processing times, or service times, of jobs are independent and exponentially distributed random variables with unit mean. Servers process jobs according to any work-conserving discipline.
Upon arrival, each job is assigned to one warm server as specified in Assumption~\ref{as_dispatcher}.
In the extreme case where no warm server exists, the job is lost.
We assume that each server can contain at most $B>d$ jobs and a job that is sent to a server with $B$ jobs is rejected.
If not specified otherwise,~$B$ is either finite or infinite.
At each {scaling} time, a cold server is selected uniformly at random,
provided that one exists, and becomes initializing with some probability~$g$. This is the \emph{scaling probability} (or rule) and will possibly depend on the system state; in the conclusion section, we will discuss how our work adapts to the case where a random number of cold servers is selected at each scaling time.
Given that jobs arrive with a rate proportional to~$N$ and only one server can be added at each scaling time, we let the scaling frequency increase with~$N$ as well.
As it occurs in Knative, this implies that the number of servers created in a time window of constant size is proportional to~$N$ if within such window the scaling probability is not zero.
We let the inter-{scaling}, initialization and expiration times be independent and exponentially distributed with rate $\alpha N$, $\beta$ and $\gamma$, respectively.


Let $\tilde{Q}^N(t)\bydef (\tilde{Q}_1^N(t),\ldots,\tilde{Q}_N^N(t))$ be the vector of queue lengths at time $t$, including the jobs in service, and let
$\tilde{S}^N(t)\bydef (\tilde{S}_1^N(t),\ldots,\tilde{S}_N^N(t))$
be the vector of server states.
Specifically, $\tilde{S}_k^N(t)\in\{0,1,2\}$ indicates whether server $k$ is cold ($\tilde{S}_k^N(t)=0$), initializing ($\tilde{S}_k^N(t)=1$) or warm ($\tilde{S}_k^N(t)=2$) at time $t$.
Under the above assumptions, the stochastic process $(\tilde{Q}^N(t),\tilde{S}^N(t))$ is a continuous-time Markov chain on state space
$
\{(n,s)\in\{0,\ldots,B\}^N\times \{0,1,2\}^N: n_k>0 \Rightarrow s_k=2,\, \forall k=1,\ldots,N\}$.

It is
convenient to describe dynamics in terms of the process
$X^N(t)\bydef(X_{0,0}^N(t),X_{0,1}^N(t), X_{i,2}^N(t):  i =0,\ldots,B)$
where
\begin{align}
X_{i,j}^N(t) \bydef \frac{1}{N}\sum_{k=1}^N \I{ \tilde{Q}_k^N(t)=i, \tilde{S}_k^N(t)=j}
\end{align}
is the proportion of servers in state $j$ with $i$ jobs at time $t$.
The process~$X^N(t)$ is still a Markov chain with values in some set $\mathcal{S}^{(N)}$
that is a subset of $\mathcal{S}$.
Let $e_{i,j}\bydef \left(\delta_{i,i'}\,\delta_{j,j'} \in\{0,1\} : i'\ge 0, j'=0,1,2 \right)$  where $\delta_{a,b}$ denotes the Kronecker delta
and
let~$x\bydef (x_{i,j}) \in \mathcal{S}^{(N)}$ denote a generic state of $X^N(t)$.
For conciseness, the Markov chain $X^N(t)$ has the following transitions:
\begin{equation*}
\begin{array}{lll}
x\mapsto x'\bydef x + \frac{1}{N}(e_{i,2}-e_{i-1,2})   &{\rm with~rate}\quad \lambda N f_{i-1}(x) \\
\vspace{-0.13cm}
\\
x\mapsto x'\bydef x + \frac{1}{N} (e_{i-1,2}-e_{i,2} )    &{\rm with~rate}\quad x_i N  \\
\vspace{-0.13cm}
\\
x\mapsto x'\bydef x + \frac{1}{N} (-e_{0,0}, e_{0,1})    &{\rm with~rate}\quad \alpha N g
\\
\vspace{-0.13cm}
\\
x\mapsto x'\bydef x + \frac{1}{N} (-e_{0,1}, e_{0,2})    &{\rm with~rate}\quad \beta x_{0,1} N \\
\vspace{-0.13cm}
\\
x\mapsto x'\bydef x + \frac{1}{N} (e_{0,0} - e_{0,2})  &{\rm with~rate}\quad \gamma x_{0,2} N
\end{array}
\end{equation*}
for all $i=1,\ldots,B$,
provided that $x,x'\in\mathcal{S}^{(N)}$.
Here,
$g\bydef g(x):\mathcal{S}\to[0,1]$ is the scaling probability,
and
$f_{i}(x)$, which depends on the dispatching rule, represents the probability of assigning an incoming job to a warm server containing exactly~$i$ jobs.
If $y_0>0$,
within Power-of-$d$ we have (assuming that server selections are with replacement)
\begin{equation}
\label{eq:h_pod}
%
f_i(x) =\frac{y_i^d-y_{i+1}^d}{y_0^d},
\end{equation}
where  $y_i\bydef y_i(x)\bydef\sum_{j\ge i}x_{j,2}$,
and within JBT-$d$ we have
\begin{equation}
\label{eq:h_jbt}
%
f_i(x) =
\frac{x_{i,2} \, \I{\sum_{k=0}^d x_{k,2}=0}}{y_0}
+
\frac{x_{i,2} \, \I{\sum_{k=0}^d x_{k,2}>0}}{\sum_{k=0}^d x_{k,2}} \I{i\le d},
\end{equation}
where we have taken the convention that $0/0=0$, for all $i=0,\ldots,B-1$.
If $y_0=0$, then $f_i(x)=0$ as no warm server exists.

\subsection{Deterministic Model}

We introduce the deterministic (or fluid, mean-field) model for the dynamics of ALBA.

\begin{definition}
\label{def1}
A continuous function $x(t):\mathbb{R}_+\to \mathcal{S}$ is said to be a \emph{fluid model} (or fluid solution) if for almost all $t\in[0,\infty)$
\begin{subequations}
\label{x_def}
\begin{align}
\label{x00_def}
\dot{{x}}_{0,0} & = \gamma {x}_{0,2} - \alpha g\I{{x}_{0,0}>0}
-  \gamma {x}_{0,2}\,\I{{x}_{0,0}=0,\, \gamma {x}_{0,2} \le \alpha g}\\
\label{x01_def}
\dot{{x}}_{0,1} & = \alpha  g \I{{x}_{0,0}>0} -\beta{x}_{0,1}
+\gamma {x}_{0,2}\,\I{{x}_{0,0}=0,\, \gamma {x}_{0,2} \le \alpha g} \\
\label{x02_def}
\dot{x}_{0,2} &= x_{1,2} - h_0(x) + \beta x_{0,1} - \gamma x_{0,2} \\
\label{xi2_def}
\dot{x}_{i,2} &= x_{i+1,2}\I{i< B}-x_{i,2} + h_{i-1} (x)- h_i(x)\I{i< B},
\end{align}
\end{subequations}
$i=1,\ldots,B$, where
$g\bydef g(x):\mathcal{S}\to[0,1]$,
and
$h_i (x)=\min\{\beta x_{0,1},\lambda\}$ if  $y_0>0$
and otherwise ($y_0=0$):
\begin{align}
\label{h_i_Pod_def}
h_i (x) =
\lambda\, \frac{y_i^d-y_{i+1}^d}{y_0^d}
\end{align}
if Power-of-$d$ is applied and
\begin{align}
\label{h_i_JBTd_def}
h_i (x) =
\left\{
\begin{array}{ll}
%
\lambda\, \frac{x_{i,2}}{\sum_{k=0}^d x_{k,2}} \I{i\le d}, \quad {\rm if }\,\,  \sum_{k=0}^dx_{k,2}>0\\
\\
\left(\beta {x}_{0,1}+{x}_{d+1,2}\I{i=d} \right)\I{{x}_{d+1,2} + (d+1)\beta {x}_{0,1}\le \lambda},
\\ \qquad\qquad\qquad {\rm if }\,\,  \sum_{k=0}^dx_{k,2}=0,\,\, i\le d,\\
\\
 \frac{{x}_{i,2}}{y_0} (\lambda - {x}_{d+1,2} - (d+1) \beta {x}_{0,1}  )^+,
\\ \qquad\qquad\qquad {\rm if }\,\,  \sum_{k=0}^dx_{k,2}=0,\,\, i> d,\\
\end{array}
\right.
\end{align}
if JBT-$d$ is applied.
%
%
%
%
\end{definition}


As for $X_{i,j}^N(t)$, $x_{i,j}(t)$ is interpreted as the proportion of servers in state~$j$ with~$i$ jobs at time~$t$.

%

Let us provide some intuition about the fluid model.
First, when a strictly positive fluid mass of warm server exists, i.e., $y_0>0$, the functions $h_i$ are interpreted as the rate at which jobs are assigned to servers with exactly~$i$ jobs. When the amount of fluid of cold servers is strictly positive, i.e., $x_{0,0}>0$, to some extent these equations may be interpreted as the conditional expected change, or \emph{drift}, from state~$x$ of the Markov chain~$X^N(t)$.
In contrast, when $x_{0,0}=0$, there exists a term, $- \I{{x}_{0,0}=0,\, \gamma {x}_{0,2} \le \alpha g} \gamma {x}_{0,2}$ (see~\eqref{x00_def} and \eqref{x01_def}), that still drains the amount of cold servers down.
This is due to warm servers that become cold but immediately turn initializing
and it appears if the scaling rule is `greedy enough', i.e.,
if the rate at which new initializing servers can be created is greater than or equal to the rate at which warm servers go cold.
This term  is due to fluctuations of order $1/N$ that appear when $X_{0,0}^N(t)=0$, which bring discontinuities in the drift of $X^N(t)$, and will come out from the stochastic analysis developed in Appendix~1.1.3.
%

Now, let us focus on \eqref{h_i_Pod_def} and \eqref{h_i_JBTd_def}, and
let us assume that $y_{0}>0$.
In the case of Power-of-$d$, $h_i=\lambda f_i$ and $x(t)$ evolves following the natural dynamics of Power-of-$d$ as in \cite{Mitzenmacher2001}, though normalized on the variable mass of warm servers~$y_0(t)$.
%
The case of JBT-$d$ is more delicate because of the discontinuous structure of $f_i$ in \eqref{eq:h_jbt}.
If a strictly positive fraction of warm servers with no more than $d$ jobs exist,
then $h_i=\lambda f_i$  and $x(t)$ evolves following the natural dynamics of JBT-$d$, though again normalized on a variable number of servers. On the other hand, when $\sum_{k=0}^dx_{k,2}=0$,
there is a flow of warm servers with at most $d$ jobs that are created but immediately used for dispatching jobs.
Specifically, there are two factors that come into play here:
the first is due to initializing servers that get warm with exactly $i$ jobs (with rate $\beta x_{0,1}$), for all $i\le d$,
and
the second is due service completions from servers with exactly $d+1$ jobs (with rate $x_{d+1,2}$).
The resulting rate
can not be greater than $\lambda$, the rate where jobs are assigned to servers, and this justifies the $\I{{x}_{d+1,2} + (d+1)\beta {x}_{0,1}\le \lambda}$ term.
Then, the excess of such rate, $(\lambda - {x}_{d+1,2} - (d+1)\beta {x}_{0,1})^+$, is distributed uniformly over servers with $i>d$ jobs.
In Theorem~\ref{th3}, we will show that such rate is key for the design of fluid optimal scaling rules.
%
Finally, assume that no warm server exists, i.e., $y_0=0$.
Here,
initializing servers get idle-on with rate $\beta x_{0,1}$ but all of them are immediately filled by new arrivals if $\lambda\ge \beta x_{0,1}$, and in this case the mass of idle-on servers remains zero.
Otherwise, $x_{0,2}$ increases with surplus rate $ \beta x_{0,1} - \lambda$.

The existence of a fluid solution started in $x^{(0)}\in\mathcal{S}_1$ will be direct from Theorem~\ref{th1}.

\subsection{Scaling Rules}
\label{sec:scaling_rules_def}

The scaling rule $g$ gives the probability to activate a new server at each scaling time as a function of the system state. The following assumption,
which will hold throughout the paper,
provides the structure of the scaling rules investigated in this paper.

%
%

\begin{assumption}
\label{as3}
The scaling rule~$g:\mathcal{S}\to [0,1]$ is Lipschitz continuous,
and $g(x)>0$ if $x_{0,0}=1$.
\end{assumption}

The last technical condition
is natural and will rule out
the existence of
degenerate fixed points.
We allow $g(x)$ to be greater than zero even when no cold server exists, i.e., $x_{0,0}=0$.
While this has no impact on the dynamics of the stochastic model, it does affect the fluid model as there may exist a flow of idle-on servers that go cold but instantly turn initializing keeping the proportion of cold servers at zero.
This situation can occur if $\lambda$ is large enough and not only in the transient regime; see Theorem~\ref{th2}.

We propose two scaling rules that satisfy Assumption~\ref{as3}.


\begin{definition}
\label{blind}
At each {scaling} time, if the system state is $x$,
\begin{itemize}
 \item
\emph{Blind}-$\theta$ activates a new server with probability $g(x)=\theta$, $\theta \in(0,1]$;





 \item
\emph{Rate-Idle} activates a new server with probability $g(x)=\frac{1}{\lambda}(\lambda-\beta x_{0,1} -x_{1,2})^+$.

\end{itemize}

\end{definition}

Blind-$\theta$ is oblivious of the system state and thus highly scalable.
Rate-Idle scales resources up if and only if the mean demand, $\lambda$, exceeds the rate at which servers become idle-on, $\beta x_{0,1}+x_{1,2}$.
Here, the auto-scaler  needs to know the amount of initializing servers, the amount of busy servers with exactly one job and both the job arrival and server initialization rates; in Knative, these variables are available to the auto-scaler.
If combined with JIQ, we will show in Theorem~\ref{th3} that Rate-Idle is asymptotically optimal.
%


\section{Main Results}
\label{sec:main_results}

We now present our main results.
In Theorem~\ref{th1}, we justify the use of the deterministic model to approximate the behavior of the stochastic model.
Then, we focus on properties of the deterministic model and i) characterize its fixed points in Theorem~\ref{th2} and ii) investigate the design of optimal scaling rules in Theorem~\ref{th3}.

\subsection{Connection between the Fluid and Markov Models}
\label{sec:connection}

The following result shows that the fluid model can be seen as a first-order approximation of the sample paths of the stochastic \review{model}.
\begin{theorem}
\label{th1}
Let $T<\infty$, $x^{(0)}\in\mathcal{S}_1$ and assume that $\|X^N(0)- x^{(0)}\|_w \to 0$ almost surely.
Then, limit points of the stochastic process $(X^{N}(t))_{t\in[0,T]}$
exist and almost surely satisfy the conditions that define a fluid solution started at $x^{(0)}$.
\end{theorem}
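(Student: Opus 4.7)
The strategy is the classical martingale approach for mean-field limits, combined with the methodology of~\cite{tsitsiklis2012,Bramson1998} to handle the drift discontinuities that appear at the boundary sets $\{x_{0,0}=0\}$, $\{y_0=0\}$, and, under JBT-$d$, $\{\sum_{k=0}^d x_{k,2}=0\}$. Using the Dynkin decomposition
\[
X^N(t) = X^N(0) + \int_0^t F^N(X^N(s))\,ds + M^N(t),
\]
with $F^N$ the drift induced by the transitions of Section~\ref{sec:model} and $M^N$ a local martingale, the plan has three steps: (i) tightness of $(X^N)$ in the Skorokhod space $D([0,T],(\mathcal{S},d_w))$; (ii) vanishing of $M^N$ uniformly on $[0,T]$; and (iii) identification of every subsequential limit with a fluid solution of~\eqref{x_def}. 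Existence of a fluid solution started at $x^{(0)}$ then comes as a by-product of (iii).

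For tightness, the weighted norm $\|\cdot\|_w$ is what saves us when $B=\infty$: it makes $\mathcal{S}$ a bounded subset of $\ell_2$ with precompact closed balls; every jump of $X^N$ has $\|\cdot\|_w$-size at most $C/N$; and the total jump rate is $O(N)$ thanks to mass balance and to $\mathbb{E}\bigl[\sum_i i\,X^N_{i,2}(t)\bigr]$ remaining finite on $[0,T]$, a control that follows from $x^{(0)}\in\mathcal{S}_1$ together with the arrival rate being $\lambda N$. Aldous--Rebolledo's criterion then yields relative compactness, and vanishing of $M^N$ is standard from Doob's maximal inequality applied coordinatewise together with the $O(1/N)$ bound on its predictable quadratic variation.

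The identification step is where the work concentrates. Outside the three discontinuity sets, $F^N\to F$ uniformly on compact sets and a standard continuous-mapping argument shows that any limit $x(\cdot)$ is absolutely continuous and satisfies $\dot x = F(x)$ at almost every $t$ for which $x(t)$ lies in the interior. The main obstacle — and the reason the proof follows~\cite{tsitsiklis2012,Bramson1998} — is to describe the limit dynamics on a boundary set. For $\{x_{0,0}=0\}$, the prelimit $X_{0,0}^N$ fluctuates at scale $1/N$ and a local time-change/averaging argument shows that the rate at which warm servers go cold is instantly absorbed by fresh initializations as long as $\gamma x_{0,2}\le \alpha g$, producing the extra term $-\gamma x_{0,2}\,\I{x_{0,0}=0,\gamma x_{0,2}\le \alpha g}$ in~\eqref{x00_def}--\eqref{x01_def}. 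An analogous argument on $\{y_0=0\}$ yields $h_i=\min\{\beta x_{0,1},\lambda\}$ in~\eqref{h_i_Pod_def}--\eqref{h_i_JBTd_def}; under JBT-$d$, the boundary $\{\sum_{k=0}^d x_{k,2}=0\}$ is handled similarly by noting that servers entering states $i\le d$ via initialization (rate $\beta x_{0,1}$) or via service at level $d+1$ (rate $x_{d+1,2}$) are instantly consumed by incoming dispatches up to the cumulative rate $\lambda$, yielding the piecewise formula in~\eqref{h_i_JBTd_def}. Making these ``instantaneous replenishment'' identities rigorous at the level of the prelimit fluctuations is the main technical difficulty, and I would expect the argument to be essentially pathwise, invoking the Bramson-style fluid-model verification rather than a differential-inclusion formulation.
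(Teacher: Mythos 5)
Your outline is correct and reaches the same conclusion, but it runs the functional-limit machinery along a genuinely different route from the paper. You use the Dynkin decomposition $X^N(t)=X^N(0)+\int_0^t F^N(X^N(s))\,ds+M^N(t)$, Aldous--Rebolledo tightness in the Skorokhod space, and Doob's inequality on the $O(1/N)$ quadratic variation; the paper instead builds all the $X^N$ pathwise on a single probability space via uniformization (Section~\ref{sec:constructionX}), invokes the functional strong law of large numbers and the SLLN to isolate a full-measure event $\mathcal{C}$ (Lemma~\ref{GC_F}), and obtains subsequential limits by showing the sample paths are approximately Lipschitz coordinatewise (Lemma~\ref{LM:tightness}, Proposition~\ref{PR:tightness}). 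Both routes are standard; the paper's coupled construction buys the almost-sure statement of Theorem~\ref{th1} directly, whereas your plain $L^2$ maximal inequality only gives $\mathbb{P}(\sup_{t\le T}\|M^N(t)\|_w>\epsilon)=O(1/N)$, which is not summable in $N$ --- to match the a.s.\ conclusion you would need an exponential martingale inequality for bounded-jump martingales (or Azuma--Hoeffding, which is exactly what the paper deploys in Lemma~\ref{lemma_ascasji2}) followed by Borel--Cantelli. On the identification step the two proofs essentially coincide: you correctly list the three discontinuity sets $\{x_{0,0}=0\}$, $\{y_0=0\}$ and $\{\sum_{k\le d}x_{k,2}=0\}$ and the ``instantaneous replenishment'' mechanism behind the extra terms in \eqref{x00_def}--\eqref{x01_def} and \eqref{h_i_JBTd_def}. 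One refinement worth noting: rather than a time-change/averaging argument, the paper extracts the boundary flux more cheaply from the constraint that a non-negative absolutely continuous coordinate sitting at zero at a point of differentiability must have zero derivative there, combined with a one-sided drift inequality from the prelimit (Proposition~\ref{prop_asc9s}); the genuinely delicate decoupling is confined to the JBT-$d$ boundary, where the selection indicators must be averaged against the event $\{\sum_{k\le d}X^N_{k,2}>0\}$ via a martingale concentration bound (Lemma~\ref{lemma_ascasji2}). Your proposal identifies the right difficulties and would go through once the a.s.\ upgrade and that decoupling step are made explicit.
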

\begin{proof}
Given in Appendix~1.
\end{proof}

The stochastic and the deterministic models have some non-standard aspects that prevent us to prove Theorem~\ref{th1} by directly applying Kurtz's theorem or similar known results. The main technical difficulty is that
the trajectories of the deterministic model may cross or converge to points of discontinuity of its drift function.
%
%
We handle this by following the general framework in~\cite{tsitsiklis2012,Bramson1998} and developing  ad-hoc arguments specific to the structure of our problem (given in Appendix~1.1.3).
%

In view of Theorem~\ref{th1} and since typical and default maximum scale limit values of real applications are 1000 or more \cite{scaleperrequest}, i.e., $N\ge 10^3$, we expect that the fluid model $x(t)$ provides an accurate approximation of the average behavior of~$X^N(t)$.
To support this claim, we present the results of numerical simulations; see also Section~\ref{sec:numerical}.
Figure~\ref{fig:simulations} (left) plots the trajectories of $x(t)$ and $X^{N}(t)$ when $N=10^3$ and $B=10^2$ along the coordinates of cold ($x_{0,0}$), initializing  ($x_{0,1}$), idle-on  ($x_{0,2}$) and busy ($y_1$) servers.
Also, Figure~\ref{fig:simulations} (right) plots the average number of jobs per warm server, which in state~$x$ is given by $Q(x)\bydef\frac{1}{y_0}\sum_{i\ge 1} ix_{i,2}$.
The fluid (stochastic) trajectories are always represented by dashed (continuous) lines and each curve is the average of ten simulations.
Each simulation is based on~$10^6$ events.
We have set $\lambda=0.7$, $\alpha=0.05$, $\beta=0.1$ and $\gamma=0.025$.
%
%
As scaling rule, we have chosen Blind-$\theta$ where $\theta=\frac{0.5}{\alpha}\frac{1-\lambda}{ \frac{1}{\beta}  + \frac{1}{\gamma}  }$;
this choice will ensure that a strictly positive proportion of cold servers exists in the long run (see Theorem~\ref{th2}).
As dispatching algorithm, we have used Power-of-$2$ (for JIQ, see Section~\ref{sec:numerical}).
At time zero, we have assumed that the system is dimensioned exactly for the average demand, i.e., $(1-\lambda) N$ servers are cold and the remaining ones are idle-on.
\begin{figure*}
\centering
%
%
\makebox[\textwidth][c]{\hspace{0cm}\includegraphics[width=1.1\textwidth,height=7.7cm]{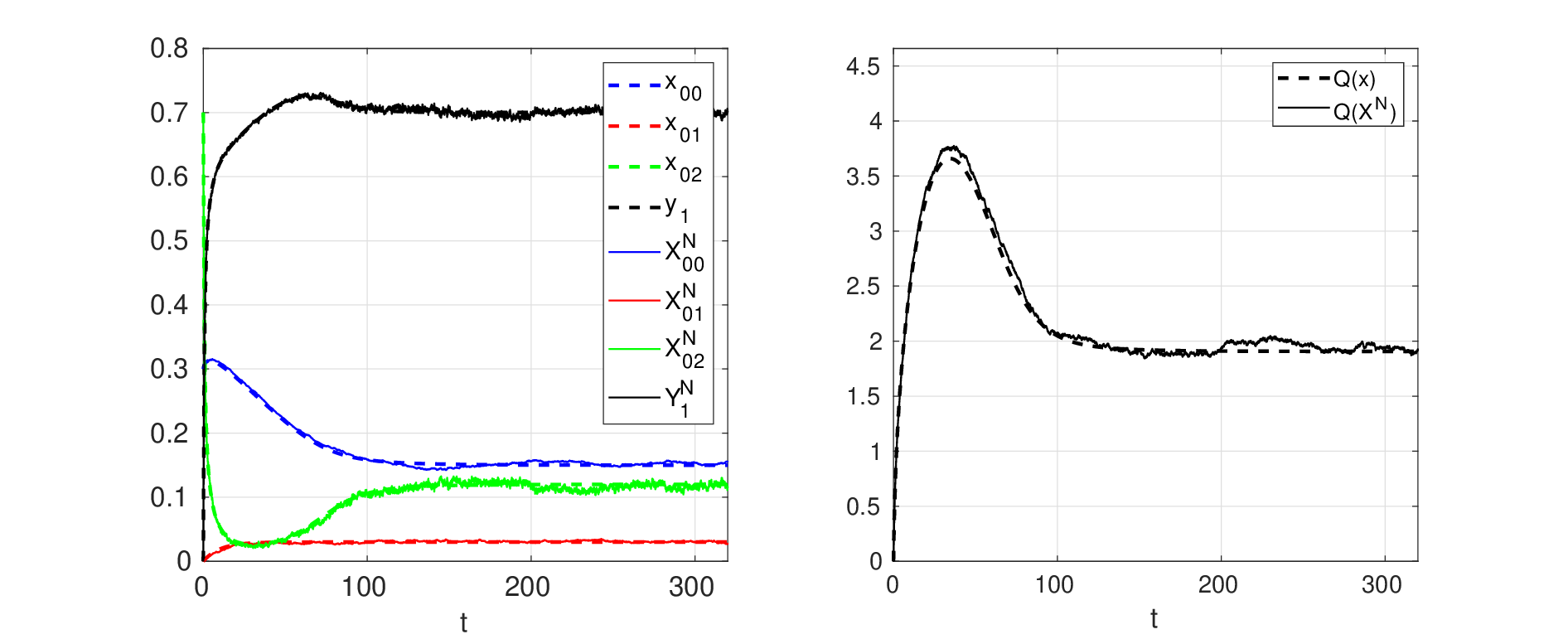}}%
\caption{Numerical convergence of the stochastic model $X^N(t)$ (continuous lines), $N=10^3$, to the fluid model~$x(t)$ (dashed lines)
when combining Power-of-2 and Blind-$\theta$.
}
%
%
%
%
%
\label{fig:simulations}
\end{figure*}
In both pictures, we observe that the fluid model captures the dynamics of $X^N(t)$ accurately.

Let us comment on the dynamics in Figure~\ref{fig:simulations}.
Initially, the system is close to instability as capacity exactly matches demand.
Here,
$Q(x(t))$ increases rapidly and
as soon as a warm server is created, it is filled with a job and as a result the proportion of idle-on servers decreases.
These decrease also because
they are not discovered fast enough upon job dispatching, thus letting them go cold even in heavy load.
This explains why the number of cold servers (the blues lines) is increasing at the beginning.
Then, more warm servers are created to mitigate the effect of the ``close to instability'' window on the accumulated overall number of jobs. Here, the mass of busy servers ($y_1$) becomes greater than the average demand $\lambda=0.7$ and $Q(x(t))$ decreases.
Finally, dynamics stabilize and in equilibrium there is a strictly positive fraction of servers that remain cold, initializing and idle-on. This indicates that there is a flux of idle-on servers that expires continuously even in equilibrium.

\subsection{Characterization of Fixed Points}
\label{sec:fixed_points}

The fluid model has the form $\dot{x}=F(x)$; see Definition~\ref{def1}.
We say that $x^*\in\mathcal{S}_1$
is a \emph{fixed point} if~$F(x^*)=0$.
We now investigate the fixed points of fluid model when buffer sizes are infinite and $\lambda$ is constant and less than one (for stability).

%
%
%
%
Let us define the following conditions:
\begin{subequations}
\label{FP_PoD_conditions}
\begin{align}
\label{FP_PoD_conditions1}
&x_{0,0} + x_{0,1} + x_{0,2} +\lambda=1 \\
\label{FP_PoD_conditions2}
&\beta x_{0,1}  =\gamma x_{0,2}\\
\label{FP_PoD_conditions3}
&\gamma x_{0,2}  \le  \alpha g(x),\quad  \mbox{ if } x_{0,0}=0 \\
\label{FP_PoD_conditions4}
&\gamma x_{0,2}  =  \alpha g(x),\quad  \mbox{ if } x_{0,0}>0
%
%
%
%
%
%
\end{align}
\end{subequations}
and if Power-of-$d$ is used:
\begin{align}
\label{FP_PoD_conditions5}
x_{i,2}  =
(\lambda+x_{0,2})\left(
\left( \tfrac{\lambda}{\lambda+x_{0,2}} \right)^{\frac{d^i-1}{d-1}}
-
\left( \tfrac{\lambda}{\lambda+x_{0,2}} \right)^{\frac{d^{i+1}-1}{d-1}}
\right),
\end{align}
for all $i\ge 1$,
otherwise if JBT-$d$ is used:
\begin{subequations}
\label{FP_PoD_conditions7_FULL}
\begin{align}
\label{FP_PoD_conditions7}
 \mbox{if }x_{0,2}=0:&\,
x_{i,2}  = 0,\quad 0\le i\le d\\
\label{FP_PoD_conditions7aa}
&\, x_{d+i,2}  = x_{d+1,2}  \left(1 - \frac{{x}_{d+1,2}}{\lambda}  \right)^{i-1},\, i\ge 2\\
\label{FP_PoD_conditions7a}
&\, x_{d+1,2} \in (0, \lambda]\\
\label{FP_PoD_conditions7b}
&\, g(x)=0\\
%
%
%
\label{FP_PoD_conditions6}
\mbox{if }x_{0,2}>0:&\,  x_{i,2} = \left( \frac{ \lambda}{z_d+x_{0,2}}\right)^i x_{0,2}\, \I{1\le i\le d+1},\, i\ge 1
\end{align}
\end{subequations}
with
$z_d\in[0,1]$ being the unique solution of
\begin{align}
\label{z_d_FP}
z_d+x_{0,2} = \frac{1-\left( \frac{ \lambda}{z_d+x_{0,2}}\right)^{d+1}}{1-\frac{ \lambda}{z_d+x_{0,2}} }   x_{0,2}
\end{align}
if $d\ge 1$ and $z_d=0$ if $d=0$.
Here, $z_d$ is interpreted as the proportion of  busy servers with no more than $d$ jobs.
%

Now, let us also introduce the following assumption, {which we will only use in Theorem~\ref{th2} below}.
\begin{assumption}
\label{as4}
For any $x_{0,2}\in[0,1-\lambda]$,
\eqref{FP_PoD_conditions5}-\eqref{FP_PoD_conditions6} uniquely determine $x_{i,2}$ for all $i\ge 1$.
\end{assumption}
Within Power-of-$d$, this assumption is clearly satisfied by
\eqref{FP_PoD_conditions5}.
Within JBT-$d$, it is satisfied only if $x_{0,2}>0$, as if $x_{0,2}=0$, then $x_{d+1,2}$ is only required to belong to $(0,\lambda]$.
Under Assumption~\ref{as4},
let $x^\circ=(x_{i,j}^\circ)$ be the unique point in $\mathcal{S}_1$ such that $x_{0,0}^\circ=0$,
$x_{0,1}^\circ  =  \frac{\gamma}{\beta+\gamma}(1-\lambda) $,
$x_{0,2}^\circ  =  \frac{\beta}{\beta+\gamma}(1-\lambda) $.

The following result characterizes fixed points.

\begin{theorem}
\label{th2}
Assume that $\lambda$ is constant and less than one.
If $x^*$ satisfies the conditions in \eqref{FP_PoD_conditions}-\eqref{z_d_FP}, then it is a fixed point of the fluid model with $B=+\infty$.
In addition,
under Assumption~\ref{as4}
\begin{enumerate}
 \item If
\begin{align}
\label{saturated_condition}
\alpha g(x^\circ) < \frac{1-\lambda}{ \frac{1}{\beta}  + \frac{1}{\gamma}  },
\end{align}
then $x_{0,0}^*>0$.
\item If \eqref{saturated_condition} does not hold, then $x^*=x^\circ$ is the unique fixed point.
\end{enumerate}

\end{theorem}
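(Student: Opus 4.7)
The plan is to prove the theorem in three stages: verification that any $x^*$ satisfying \eqref{FP_PoD_conditions}-\eqref{z_d_FP} is a fixed point; characterization of when $x^\circ$ itself is a fixed point (which yields part~1); and uniqueness under Assumption~\ref{as4} (part~2). For verification, I plug $x^*$ into each component of the drift $F$ in \eqref{x_def}. For $\dot{x}_{0,0}$ and $\dot{x}_{0,1}$, I split on whether $x_{0,0}^*>0$: when positive, \eqref{FP_PoD_conditions4} cancels $\gamma x_{0,2}^*-\alpha g(x^*)$ and kills the third-term indicator; when zero, \eqref{FP_PoD_conditions3} activates the indicator and cancels $\gamma x_{0,2}^*$. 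Combined with the balance \eqref{FP_PoD_conditions2}, both derivatives vanish. For $\dot{x}_{0,2}$ and $\dot{x}_{i,2}$ with $i\ge 1$, it then suffices to show $h_{i-1}(x^*)=x_{i,2}^*$ for all $i\ge 1$, since \eqref{xi2_def} telescopes to zero and \eqref{x02_def} collapses to $\beta x_{0,1}^*-\gamma x_{0,2}^*=0$.

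Checking the identity $h_{i-1}(x^*)=x_{i,2}^*$ depends on the dispatching rule. Within Power-of-$d$, I would derive the telescoping identity $y_i^*=(\lambda+x_{0,2}^*)[\lambda/(\lambda+x_{0,2}^*)]^{(d^i-1)/(d-1)}$ from \eqref{FP_PoD_conditions5} and substitute into \eqref{h_i_Pod_def}, with the simplification relying on $d(d^{i-1}-1)/(d-1)+1=(d^i-1)/(d-1)$. Within JBT-$d$, I split on $x_{0,2}^*>0$ versus $x_{0,2}^*=0$: in the former case, the geometric form \eqref{FP_PoD_conditions6} yields $\sum_{k=0}^d x_{k,2}^*=z_d+x_{0,2}^*$ through \eqref{z_d_FP}, so the first branch of \eqref{h_i_JBTd_def} applies and delivers the recursion; in the latter, \eqref{FP_PoD_conditions7b} forces $g(x^*)=0$ (hence $x_{0,1}^*=0$ by \eqref{FP_PoD_conditions2}), activating the overflow branches of \eqref{h_i_JBTd_def} and giving $h_i(x^*)=x_{d+1,2}^*\,\I{i=d}$ for $i\le d$ and $h_i(x^*)=x_{i,2}^*(1-x_{d+1,2}^*/\lambda)$ for $i>d$, which matches the geometric tail \eqref{FP_PoD_conditions7aa}.

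For parts~1 and~2, I first observe that $x^\circ$ automatically satisfies \eqref{FP_PoD_conditions1}, \eqref{FP_PoD_conditions2}, and (via Assumption~\ref{as4}) \eqref{FP_PoD_conditions5}/\eqref{FP_PoD_conditions6}; the only remaining constraint is \eqref{FP_PoD_conditions3}, which reads $\gamma x_{0,2}^\circ=(1-\lambda)/(1/\beta+1/\gamma)\le \alpha g(x^\circ)$, precisely the negation of \eqref{saturated_condition}. Hence $x^\circ$ is a fixed point if and only if \eqref{saturated_condition} fails. For uniqueness, any fixed point $x^*$ with $x_{0,0}^*=0$ must equal $x^\circ$: conservation \eqref{FP_PoD_conditions1} and balance \eqref{FP_PoD_conditions2} pin $(x_{0,1}^*,x_{0,2}^*)$ to the values at $x^\circ$, and Assumption~\ref{as4} forces the tail. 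The final iff in part~1 then follows contrapositively from part~2.

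The main obstacle is to rule out fixed points with $x_{0,0}^*>0$ when \eqref{saturated_condition} fails. Such a point would require $\alpha g(x^*)<(1-\lambda)/(1/\beta+1/\gamma)\le \alpha g(x^\circ)$, and since Assumption~\ref{as3} only provides Lipschitz continuity of $g$, I would treat this as a self-consistency equation in the scalar $g^*\bydef g(x^*)$: the fixed-point equations \eqref{FP_PoD_conditions1}-\eqref{FP_PoD_conditions4} together with Assumption~\ref{as4} determine a unique $x^*(g^*)$ from $g^*$, and then one must show that the equation $g(x^*(g^*))=g^*$ has no solution in the admissible interval $\bigl(0,(1-\lambda)/(\alpha(1/\beta+1/\gamma))\bigr)$. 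Continuity of the map $g^*\mapsto x^*(g^*)$ and the boundary identification $x^*\bigl((1-\lambda)/(\alpha(1/\beta+1/\gamma))\bigr)=x^\circ$ (together with $g(x^\circ)$ lying above the boundary) would be used to preclude any self-consistent interior solution, which is the delicate step that hinges on Assumption~\ref{as4}.
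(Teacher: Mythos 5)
Your first two stages track the paper's proof closely. The paper likewise reduces $F(x)=0$ to the balance $\beta x_{0,1}=\gamma x_{0,2}$ plus the recursion $x_{i,2}=h_{i-1}(x)$ for $i\ge 1$ (its equation \eqref{sj8x}), checks that recursion separately for Power-of-$d$ and for JBT-$d$ (the latter via Lemma~\ref{asjch7rrr}, with exactly the case split $x_{0,2}>0$ versus $x_{0,2}=0$ you describe), and then, under Assumption~\ref{as4}, collapses everything onto the three scalar conditions \eqref{suc7dg} in $(x_{0,0},x_{0,2})$, from which the identification of $x^\circ$ and the ``if'' direction of part~1 follow exactly as you argue. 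One point of care: parts~1 and~2 quantify over \emph{all} fixed points, so besides your Stage~1 (sufficiency of \eqref{FP_PoD_conditions}--\eqref{z_d_FP}) you also need the converse, that every fixed point satisfies those conditions --- in particular \eqref{FP_PoD_conditions1} and \eqref{FP_PoD_conditions2}, which you invoke to pin $(x_{0,1}^*,x_{0,2}^*)$. This converse is where the paper spends most of its effort (summing the first two equations of \eqref{x_FP}, telescoping the rest, and the JBT-$d$ lemma); you use it implicitly without establishing it.

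The genuine gap is in your last paragraph. To exclude a fixed point with $x_{0,0}^*>0$ when \eqref{saturated_condition} fails, you form the self-consistency equation $g(x^*(g^*))=g^*$ on $\bigl(0,\, (1-\lambda)/(\alpha(\tfrac{1}{\beta}+\tfrac{1}{\gamma}))\bigr)$ and propose to preclude interior solutions using continuity of $g^*\mapsto x^*(g^*)$ plus the boundary identification $x^*(g^*_{\max})=x^\circ$ with $g(x^\circ)\ge g^*_{\max}$, where $g^*_{\max}\bydef (1-\lambda)/(\alpha(\tfrac{1}{\beta}+\tfrac{1}{\gamma}))$. This cannot close the argument: knowing that $\psi(g^*)\bydef g(x^*(g^*))-g^*$ is nonnegative at the right endpoint of the interval says nothing about interior zeros of $\psi$, and Assumption~\ref{as3} supplies only Lipschitz continuity of $g$, not any monotonicity along the curve $g^*\mapsto x^*(g^*)$. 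A $g$ that is large at $x^\circ$ but dips to the value $\gamma x_{0,2}/\alpha$ at some interior point of that curve produces an interior zero of $\psi$, so no argument based purely on continuity and endpoint values can succeed; something structural about $g$ must be invoked. For comparison, the paper's own proof disposes of this step in a single sentence, asserting that $(0,x_{0,2}^\circ)$ ``uniquely solves'' \eqref{suc7dg} when \eqref{casei} holds, without giving the justification you are (rightly) looking for. So you have correctly isolated the delicate point, but the mechanism you propose for it is not a proof.
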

\begin{proof}
Given in Appendix~1.
\end{proof}

At the fluid scale and in a fixed point, Theorem~\ref{th2} also provides the boundary scaling probability that distinguishes between a ``saturated'' and a non-saturated system.
%
Specifically,
%
if the scaling rule satisfies \eqref{saturated_condition},
then in a fixed point there exists a fraction of idle-on servers that go cold and instantly become initializing, provided that $g(x^\circ)>0$. Here,
the pool of cold servers remains non-empty.
%
On the other hand,
if $g(x^\circ)$ does not satisfy \eqref{saturated_condition},
then no cold server exists in a fixed point but we observe that
\eqref{FP_PoD_conditions1}
and
\eqref{FP_PoD_conditions2}
imply that a strictly positive fraction of servers remain initializing, i.e., $\frac{\gamma}{\beta+\gamma}(1-\lambda)$.
Here, the interpretation is that there still exists a mass of idle-on servers that go cold but instantly become initializing while keeping the proportion of cold servers down to zero.
This corresponds to a waste of resources because initializing servers cannot process jobs.
In other words, a better performance may be obtained by keeping the initializing servers warm at all times (no auto-scaling); recall also that billing policies charge warm and initializing servers.


Within Blind-$\theta$, $g(x)=\theta$ and the conditions~\eqref{FP_PoD_conditions}-\eqref{z_d_FP} easily identify a unique fixed point, say $x^*$, with $(x_{0,0}^*,x_{0,1}^*,x_{0,2}^*)$ \emph{not} depending on the choice of the load balancing algorithm.

The following remark says that uniqueness is not always guaranteed.
\begin{remark}[Multiple Fixed Points]
\label{sc8as9s}
Suppose that $g(x)=0$ whenever $y_{1}=\lambda=1-x_{0,0}$ and that JBT-$d$ is used.
Then, Theorem~\ref{th2} implies that uncountably many fixed points exist.
In fact,
while $x_{i,2}=0$ for all $i=0,\ldots,d$ and $x_{i,2}$ is uniquely determined for all $i\ge d+2$ \emph{once fixed} $x_{d+1,2}$,
the conditions \eqref{FP_PoD_conditions}-\eqref{z_d_FP}
do not tie $x_{d+1,2}\in(0,\lambda]$ to a specific value.
%
%
%
%
\end{remark}


\subsubsection{Blind-$\theta$ and Random Dispatching}

For illustration purposes, let us consider Blind-$\theta$ with random dispatching (Power-of-$1$).
This combination does not involve any communication overhead among the auto-scaler, dispatchers and servers, and for this reason it is well suited for large systems with vast numbers of dispatchers.
Here, Theorem~\ref{th2} identifies a unique fixed point, $x^*$.
After some algebra, we obtain
$x_{0,2}^* = \min \left\{
\frac{\alpha\theta}{\gamma},\,
x_{0,2}^\circ
\right\}
$
and for the mean queue length per warm server, $Q(x)=\frac{1}{y_0}\sum_i i x_{i,2}$, we obtain (using also \eqref{FP_PoD_conditions5})
\begin{align}
\label{c9as0cs}
Q(x^*)
 =  \frac{\lambda}{\min \left\{
\frac{\alpha\theta}{\gamma},\,
x_{0,2}^\circ
\right\}}.
\end{align}
As long as a strictly positive fraction of cold servers exists,
or equivalently $\frac{\alpha\theta}{\gamma} <x_{0,2}^\circ$, we remark that $Q(x^*)$ grows \emph{linearly} in~$\lambda$.

\subsection{Optimal Design}
\label{sec:optimal_design}

Within Blind-$\theta$, Theorem~\ref{th2} guarantees the existence of a unique fixed point and
all of our numerical simulations, which we omit, indicate that it is a global attractor.
Here, necessarily $x_{0,2}^*>0$, by \eqref{FP_PoD_conditions4}, which means that a number of warm servers remain idle-on in equilibrium. Clearly, this is not optimal for energy consumption because idle-on servers consume energy.
%
%
%
%
Our goal now is to design scaling rules ensuring that a global attractor exists \emph{and} given by $x^\star$, where $x^\star\in\mathcal{S}$ is uniquely defined by $x_{0,0}^\star=1-\lambda$ and $x_{1,2}^\star=\lambda$.

\begin{remark}[Fluid Optimality]
\label{rem:asymptotic_optimality}
In $x^\star$ dynamics have achieved \emph{``delay and relative energy optimality''} in the sense that both the waiting time of jobs and the relative energy
portion consumed by idle-on and initializing servers vanish in the limit.
%
%
Here, a possible intuition is that each job is always assigned to a busy server with exactly one job but at the precise moment where it completes the processing of its previous job.
Therefore, service capacity perfectly matches demand.
\end{remark}

A direct consequence of Theorem~\ref{th2} and \eqref{FP_PoD_conditions4} is that it is necessary to impose~$g(x^\star)=0$ to achieve fluid optimality.
Within Power-of-$d$, this is impossible
as
this condition would imply that
$x_{0,2}=0$, and then \eqref{FP_PoD_conditions5} would imply~$x_{i,2}=0$ for all $i$, contradicting that~$\|x\|=1$.
In fact, Theorem~\ref{th2} implies that the unique candidate is JIQ, though it leaves open the possibility that $x(t)$ may converge to a fixed point in the sub-optimal set $\SO$, see \eqref{eq:SO}.
%
%
%
Thus, it remains to understand what additional structure the scaling rule~$g(x)$ should satisfy to make $x^\star$ a global attractor.
%
Here,
Remark~\ref{sc8as9s} suggests that even the knowledge of the amount of busy servers is not enough. More precisely, it implies that one needs $g(x)>0$ for all $x\in \SO$ as otherwise multiple fixed points exist. Therefore, given the structure of $\SO$ and $x^\star$, we have the following remark.

\begin{remark}
\label{rem:x12_necessary}
A fluid optimal scaling rule needs the access to the amount of busy servers with exactly one job, i.e.,~$x_{1,2}$.
\end{remark}

%
%

The following result provides a general condition that yields fluid optimality.


\begin{theorem}[Optimal Design]
\label{th3}
Let $\beta<1$ and
let $x(t)$, with $x(0)\in\mathcal{S}_1$, denote a fluid solution
induced by JIQ and any scaling rule $g(x)$
that satisfies, beyond Assumption~\ref{as3},
\begin{align}
\label{gx_optimal}
g(x) = 0 ~{\rm if~and~only~if} ~  x_{1,2} + \beta x_{0,1} \ge \lambda.
\end{align}
Then, $\lim_{t\to\infty}\| x(t) - x^\star \|_w  = 0$.
\end{theorem}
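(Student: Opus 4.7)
The plan is to first certify $x^\star$ as the unique relevant fixed point via Theorem~\ref{th2}, and then establish global attraction in stages that exploit the design property~\eqref{gx_optimal}: the scale-up process is on \emph{precisely} when $x_{1,2}+\beta x_{0,1}<\lambda$. For the fixed-point step, substituting $x^\star=((1-\lambda)e_{0,0}+\lambda e_{1,2})$ into Definition~\ref{def1} gives $\dot x=0$ because $g(x^\star)=0$ (boundary case of \eqref{gx_optimal}) and $h_0(x^\star)=x_{1,2}^\star=\lambda$, $h_i(x^\star)=0$ for $i\ge 1$. For uniqueness, I walk through the JBT-$0$ cases of~\eqref{FP_PoD_conditions}: a candidate with $x_{0,2}^*>0$ needs $g(x^*)>0$ from \eqref{FP_PoD_conditions4}, which by \eqref{gx_optimal} forces $x_{1,2}^*<\lambda$, while simultaneously \eqref{FP_PoD_conditions6} with $d=0$ forces $x_{1,2}^*=\lambda$, a contradiction; a candidate with $x_{0,2}^*=0$ and $x_{0,0}^*>0$ has $x_{0,1}^*=0$ via \eqref{FP_PoD_conditions2}, so $g(x^*)=0$ imposes $x_{1,2}^*\ge\lambda$, which combined with $x_{1,2}^*\in(0,\lambda]$ from \eqref{FP_PoD_conditions7a} pins $x_{1,2}^*=\lambda$, and the total mass then forces $x_{0,0}^*=1-\lambda$; the case $x_{0,0}^*=0$ is excluded by mass balance.

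For the convergence step, I would proceed in three sub-stages. \textbf{(i)~Capacity catches up.} On the set $\{x:x_{1,2}+\beta x_{0,1}\le \lambda-\epsilon\}$, Lipschitz continuity of $g$ together with \eqref{gx_optimal} yields $g(x)\ge g_\epsilon>0$; provided $x_{0,0}>0$, the coupled system for $(x_{0,1},x_{1,2})$ then grows the rate $\beta x_{0,1}+x_{1,2}$ until it reaches $\lambda$. Critically (and in contrast with Proposition~\ref{th4}), here $g$ is bounded away from zero on $\{y_0<\lambda\}$, not merely strictly positive, which prevents the pathological $y_0(t)\uparrow\lambda$ scenario; the hypothesis $\beta<1$ is used to rule out overshoot of $\beta x_{0,1}$ that would stall the scale-up prematurely. \textbf{(ii)~Idle-on mass vanishes.} From~\eqref{x02_def} under JIQ one has $\dot x_{0,2}=x_{1,2}-\lambda+\beta x_{0,1}-\gamma x_{0,2}$ whenever $x_{0,2}>0$; once the rate $x_{1,2}+\beta x_{0,1}$ has stabilized near $\lambda$ from Stage~(i), this becomes $\dot x_{0,2}\le -\gamma x_{0,2}+o(1)$, whence $x_{0,2}\to 0$ and $x_{0,1}\to 0$. \textbf{(iii)~Tails decay.} In the regime $x_{0,2}=0$ with $x_{1,2}+\beta x_{0,1}=\lambda$, JBT-$0$ gives $h_i=0$ for $i\ge 1$, so $\dot x_{i,2}=x_{i+1,2}-x_{i,2}$ for $i\ge 2$. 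The Lyapunov functional $L_k(x):=\sum_{i\ge k}(i-k+1)x_{i,2}$ satisfies $\dot L_k=-x_{k,2}\le 0$ and is non-increasing; an induction on $k$ propagates mass collapse from the top, yielding $x_{i,2}\to 0$ for all $i\ge 2$. Combined with mass conservation, this forces $x_{1,2}\to\lambda$ and $x_{0,0}\to 1-\lambda$. Finally, convergence in $d_w$ follows from dominated convergence applied to the weighted series, using $\overline{Q}(x(t))$ bounded (as a corollary of Stage~(iii)) to handle tails.

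The main technical obstacle is the discontinuity of the drift at the manifolds $\{x_{0,0}=0\}$ and $\{x_{0,2}=0\}$, which prevents a single smooth Lyapunov argument and requires a piecewise/Filippov analysis partitioning trajectories according to which regime they currently visit; I would borrow the discontinuity-handling framework already invoked for Theorem~\ref{th1} (via \cite{tsitsiklis2012,Bramson1998}) to make the stage-wise monotonicity claims rigorous. A secondary obstacle, specific to $B=+\infty$, is ensuring that mass does not ``escape to infinity'' in the tail of $x_{i,2}$; the functionals $L_k$ and the integrable envelope provided by $\overline{Q}$ address this provided the initial data $x(0)\in\mathcal{S}_1$ has finite mean, which is exactly the assumed initialisation set. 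Subtle cases at $y_0=0$ or where $x_{0,2}=0$ but $x_{1,2}+\beta x_{0,1}>\lambda$ (the over-supply sub-regime in~\eqref{h_i_JBTd_def}) require dedicated handling but are transient and self-correcting because $x_{0,1}$ decays exponentially once $g$ vanishes.
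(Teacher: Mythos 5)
Your fixed-point bookkeeping is fine (under \eqref{gx_optimal} the suboptimal set $\SO$ is indeed excluded, since its points have $x_{0,1}=0$ and $x_{1,2}<\lambda$, hence would need $g>0$, contradicting \eqref{FP_PoD_conditions7b}), and your Stage~(iii) tail argument is essentially sound. The gap is in Stages~(i)--(ii), which is where the whole difficulty of the theorem lives. First, $g(x)\ge g_\epsilon>0$ on $\{x_{1,2}+\beta x_{0,1}\le\lambda-\epsilon\}$ does not follow from Lipschitz continuity plus \eqref{gx_optimal}: $\mathcal{S}$ is not compact under $\|\cdot\|_w$ (mass can escape into the tail), so a continuous function that is pointwise positive on a closed set need not be bounded below on it. Second, even granting that bound, the assertion that the coupled system ``grows the rate $\beta x_{0,1}+x_{1,2}$ until it reaches $\lambda$'' is not an argument: $\dot x_{1,2}$ depends on $x_{2,2}$ and, through $h_0$ and $h_1$, on whether $x_{0,2}$ is zero, and no monotonicity of $x_{1,2}+\beta x_{0,1}$ is available. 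Third, and most importantly, once that rate touches $\lambda$, \eqref{gx_optimal} switches $g$ off, $x_{0,1}$ starts to decay, and nothing in your plan prevents the rate from falling back below $\lambda$ so that the trajectory oscillates between regimes forever; your stages implicitly assume they occur once and in order. The appeal to a ``Filippov analysis'' via the Theorem~\ref{th1} framework does not fill this hole: that machinery shows that stochastic limit points are fluid solutions, it says nothing about the $t\to\infty$ behavior of a fluid solution across its switching surfaces.

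The paper resolves exactly this persistence question by a different decomposition: it splits on the behavior of $x_{0,2}(\cdot)$ (identically zero, always positive, or mixed), uses the work identity $\dot{\overline{Q}}=\lambda-y_1$ (Lemma~\ref{lemma:Qdot}) to exclude degenerate sub-cases, and in the mixed case isolates the time $t_1$ at which the trajectory leaves $\{x_{0,2}=0\}$ with $x_{1,2}(t_1)+\beta x_{0,1}(t_1)=\lambda$ exactly. It then solves the resulting constant-coefficient linear ODE explicitly and shows $x_{1,2}(t)+\beta x_{0,1}(t)-\lambda=\beta x_{0,1}(t_1)\left(e^{-\beta(t-t_1)}-e^{-(t-t_1)}\right)+\cdots>0$ for all $t>t_1$; this is precisely where $\beta<1$ enters (it makes $e^{-\beta s}-e^{-s}>0$, so $g\equiv 0$ and $x_{0,2}>0$ persist thereafter), not to ``rule out overshoot that would stall the scale-up.'' Once locked into that linear regime, convergence to its unique fixed point $x^\star$ is immediate. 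You would need an argument of this persistence type, or a genuine Lyapunov function valid across the discontinuity manifolds $\{x_{0,0}=0\}$ and $\{x_{0,2}=0\}$, before your staged plan becomes a proof.
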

\begin{proof}
 Given in Appendix~1.
\end{proof}


The interpretation
is that $x_{1,2} + \beta x_{0,1} $ represents the overall rate at which servers become idle-on.
Thus, our optimality condition  says  to scale up resources whenever
the excess of the mean demand over the rate at which servers become idle-on is positive, as
in this case JIQ is smart enough to fill them up immediately saturating the surplus service capacity.
Otherwise, if the excess is negative, one can turn the scale-up process  off ($g=0$), and in this case the natural dynamics induced by both JIQ and the scale-down rule are enough to drive the system behavior to the desirable configuration~$x^\star$.


\begin{remark}
\label{rem:optim}
%
\emph{Rate-Idle}, see Definition~\ref{as3}, satisfies \eqref{gx_optimal}.
If $g$ denotes \emph{Rate-Idle} and $f:[0,1]\to[0,1]$ is continuous, onto and increasing,
then $f(g)$ is a scaling rule that as well satisfies~\eqref{gx_optimal}.
\end{remark}

As discussed in Section~\ref{sec:sys_desc}, the assumption $\beta<1$, i.e., the mean server initialization rate is smaller than the mean job service rate, is largely accepted in practice~\cite{scaleperrequest,Peeking18}. From a mathematical standpoint, it is not necessary for fluid optimality but simplifies our proof.

\begin{remark}[Communication Overhead]
\label{rem:overhead}
A scaling rule satisfying~\eqref{gx_optimal} requires the central controller
to have access to the amount of initializing and busy servers containing exactly one~job, i.e., $x_{0,1}$ and $x_{1,2}$. Since an initializing server informs the platform as soon as it becomes warm, $x_{0,1}$ is easily obtained in practice.
For $x_{1,2}$, the auto-scaler can run a local memory with $N$ slots, where the $n$-th slot indicates the state of server~$n$, say `Cold',  `Init',  `Idle-on',  `Busy$_1$' and `Busy$_{\ge 2}$', with obvious interpretations.
Then, one way to update the memory is by letting each server send a message to the auto-scaler whenever the transitions
`Busy$_{\ge 2}$' $\to$ `Busy$_{1}$,
`Busy$_1$' $\to$ `Idle-on' and
`Idle-on' $\to$ `Busy$_1$' occur.
As in standard implementations of JIQ, this involves only a constant number of messages per job to be exchanged between the auto-scaler and the servers.
%
%
%
%
\end{remark}


\subsection{Convergence to Multiple Fixed Points}
\label{sec:convergence_multiple}



{
In Theorem~\ref{th3}, we have provided a condition ensuring that $x^\star$ is globally stable.
In this section, we show that it is not always possible to have global stability.
To guarantee stability, one may expect that is enough to have a strictly positive scaling probability whenever the current capacity of warm servers is less than the average demand, i.e., $g(x)>0$ whenever $y_0<\lambda$.
The following proposition shows that this intuition is false.
}

%

Let
\begin{multline}
\label{eq:SO}
\SO \bydef \Big\{x\in\mathcal{S}:
x_{0,0}=1-\lambda,\,  x_{0,1}=x_{0,2}=0,\\ x_{1,2}<\lambda \mbox{ and \eqref{FP_PoD_conditions7aa} holds  with } d=0 \Big\}
\end{multline}
and
let $\overline{Q}(x)\bydef \sum_{i\ge 1}ix_{i,2}$ denote the average number of jobs per server in state~$x\in\mathcal{S}$; here, cold and initializing servers are included in the counting.

\begin{proposition}
\label{th4}
Assume that $\lambda$ is constant and less than one.
Let $g(x)$ be any scaling rule such that
\begin{align}
g(x)=\frac{1}{\lambda}(x_{0,0}-1+\lambda)^+,\qquad \forall x\in\mathcal{S}: y_0<\lambda.
\end{align}
Let $x(t)$ denote a fluid model induced by such $g(x)$ and JIQ such that
%
\begin{multline}
\label{uc8a9gtg}
x_{0,0}(0)>1-\lambda,\,\,
x_{0,2}(0)=0,\\
x_{1,2}(0)+\beta x_{0,1}(0)< \lambda < \overline{Q}(x(0))<\infty.
\end{multline}
Suppose that $\beta <1$, $\alpha\neq\beta$ and $B=+\infty$.
Then,
\begin{align}
\label{g_positive}
g(x(t))&
> 0, \quad \forall t\ge 0
%
%
\end{align}
\begin{multline}
\label{Qlimit}
\lim_{t\to\infty} \overline{Q}(x(t)) = \overline{Q}(x(0))
  +  \frac{x_{0,1}(0)}{\beta}
\\+  \frac{\alpha+\beta}{\alpha\beta} (x_{0,0}(0)-1+\lambda)
>\lambda.
\end{multline}
In addition,
%
%
$y_0(t)\uparrow\lambda$, and if $x_{1,2}(t)\to x_{1,2}(\infty)$,  then $x(t)\to x(\infty)$ with~$x(\infty)\in\SO$.
\end{proposition}
\begin{proof}
Given in Appendix~3.
\end{proof}




Thus, while the proportion of warm servers converges to~$\lambda$,
such convergence may occur \emph{from below} even if there always exists a strictly positive probability of creating new warm servers.
In this case, the average demand is greater than the current service capacity at any point in time and
this makes the mean queue length converge to a limit that depends on the initial conditions.
%
%

Let us comment a little bit further and prepare the setting for our next contribution.
To create the underload situation above where $y_0(t)\uparrow\lambda$, it is not necessary to assume that all warm servers are initially busy ($x_{0,2}(0)=0$), though we have included this condition in \eqref{uc8a9gtg} to simplify our proof.
In contrast, to avoid this situation, it may be sufficient that $g(x)$ is bounded away from zero whenever $y_0<\lambda$.
By continuity, this implies that $g(x)>0$ as well whenever $y_0=\lambda$, but in this case the resulting scaling rule will not possess the optimality property stated in Theorem~\ref{th3} below (as this will imply that~$g(x^\star)>0$).
On the other hand, one may consider a scaling rule that is discontinuous on the set $\{x:y_1=\lambda\}$, a setting that does not satisfy Assumption~\ref{as3}.
Here, beyond revisiting Theorem~\ref{th1} for justification of the fluid model, the problem is that scale-up decisions would significantly depend on small perturbations of the equilibrium system state, severely impacting robustness from a practical standpoint.



%

\section{Empirical Comparison: Synchronous vs Asynchronous}
\label{sec:empirical}

The structural differences between the synchronous and asynchronous approaches have been described in Section~\ref{sec:sync_vs_async}.
{
In this section, we compare both approaches by means of numerical simulations.
Specifically, we compare
our asynchronous combination of JIQ and Rate-Idle (see Definition~\ref{blind})
with a generalization of TABS, i.e., the synchronous scheme developed in~\cite{elasticSIG}.
For the latter, we assume that $d$ servers are initialized at the moment of a job arrival if all active servers are busy upon arrival of that job, in which case the job is sent to a (busy) server at random.
Thus, the TABS scheme in~\cite{elasticSIG} is recovered when $d=1$.
Let us refer to such generalization as TABS-$d$.
Clearly, $d$ affects the scale-up rate and plays the same role of $\alpha$ in ALBA.
To make the comparison fair, we will assume that $\alpha$ is fine-tuned such that the resulting \emph{scale-up rate} induced by ALBA matches the scale-up rate induced by TABS-$d$; thus, $\alpha=\alpha(d)$.
Here, the scale-up rate is defined as the number of server initialization signals divided by the time horizon.
}

{
Our comparison metrics are
\begin{itemize}
 \item the empirical probability of waiting, that is the average fraction of jobs that are sent to a busy server. We refer to these as $p_{{\rm Wait}}^{\rm ALBA}$ and $p_{{\rm Wait}}^{{\rm TABS-}d}$.

\item the empirical energy consumption, that is $E = N(w_{\rm{init}} x_{0,1}(t) + w_{\rm{idle-on}}x_{0,2}(t) + w_{\rm{busy}}) y_1(t)$ averaged over time; here, we assume $w_{\rm{init}}=2$, $w_{\rm{idle-on}}=0.5$ and $w_{\rm{busy}}=1$. We refer to these as $E^{\rm ALBA} $ and $E^{{\rm TABS-}d}$.

\end{itemize}
Then, we consider the ratios
\begin{equation}
\label{comp_ratio}
\mathcal{R}_{{\rm Wait}} :=
\frac
{
p_{{\rm Wait}}^{\rm ALBA} }{p_{{\rm Wait}}^{{\rm TABS-}d}
},
\quad
\mathcal{R}_{{\rm Energy}} :=
\frac
{
E^{\rm ALBA} }{E^{{\rm TABS-}d}
},
\end{equation}
and evaluate them by simulation of $10^7$ events (both schemes have been tested within the same seed sequences)
and when
$N\in\{100,500,1000\}$, $\lambda\in\{0.35,0.7\}$, $d=\{1,5,10\}$, $\beta=0.1$ and $\gamma=0.025$.
If a time unit is 10 milliseconds, these parameters are realistic \cite{scaleperrequest,Google,Peeking18}.
We also assume that the initial condition is $x^\star$, i.e., the global attractor of the fluid dynamics defined in Section~\ref{sec:optimal_design}.
This choice measures the perturbations of order $1/N$ that appear around $x^{\star}$, which are not visible at the fluid scale.
Within this setting, Figure~\ref{fig:comparison} plots $\mathcal{R}_{{\rm Wait}}$ (blue) and $\mathcal{R}_{{\rm Energy}}$ (red)
and
shows that ALBA always provides a much smaller probability of waiting than TABS-$d$ while inducing the same energy consumption cost as $\mathcal{R}_{{\rm Energy}}$ is almost one; see the Appendix for a table containing numerical data.
In addition, this behavior is amplified when~$N$ and~$d$ increase.
\begin{figure}
\centering
\makebox[1.1\columnwidth][c]{\hspace{-0.5cm}\includegraphics[width=1.1\columnwidth]{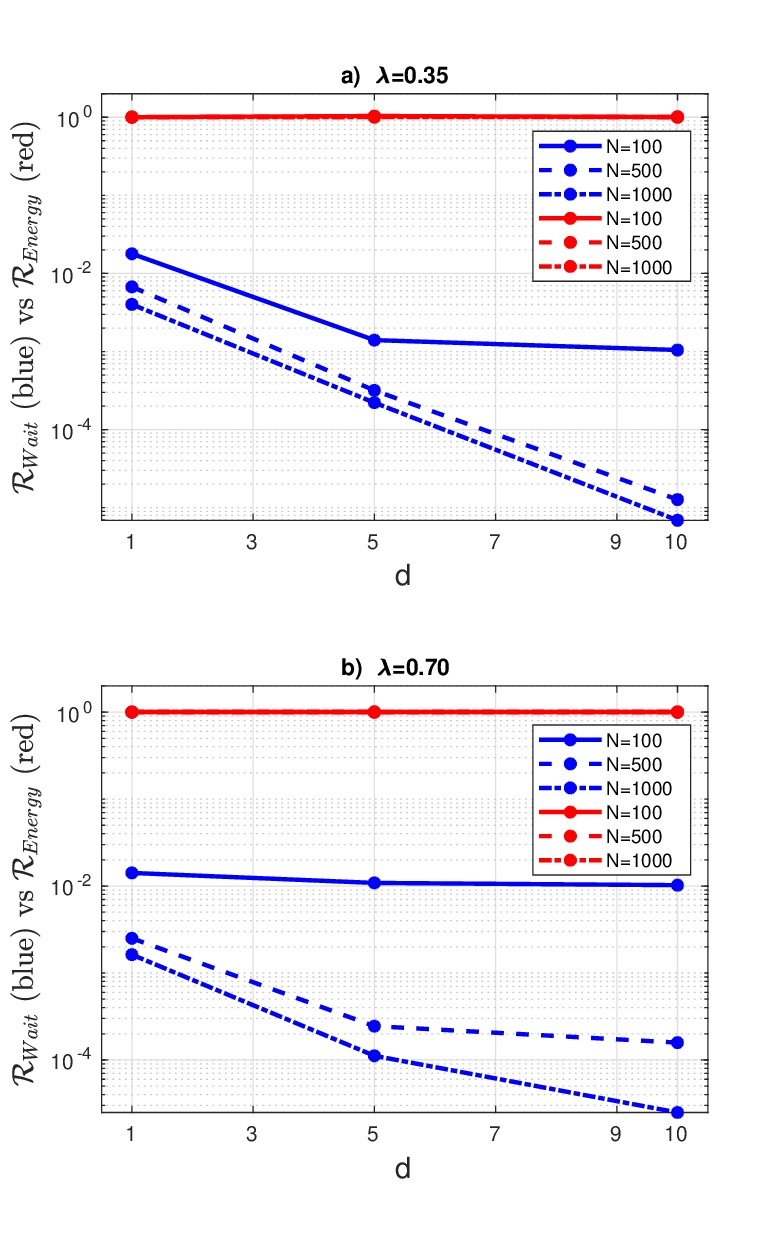}}%
%
%
\vspace{-1cm}
\caption{Ratio $\mathcal{R}^{(N)}$ of the transient probability of waiting induced by the proposed asynchronous scheme (Rate-Idle+JIQ) and the synchronous approach in \cite{elasticSIG}, respectively. The initial condition is the global attractor~$x^\star$~(defined in Section~\ref{sec:optimal_design}, see Theorem~\ref{th3}), which corresponds to delay and relative energy optimality.
}
\label{fig:comparison}
\end{figure}
}
%
%
%
%
%
As discussed in Section~\ref{sec:sync_vs_async}, we own the performance gain of ALBA to the fact that scale up decisions may be taken before job arrivals, while in a synchronous scheme such as TABS-$d$, jobs are forced to wait any time a scale up decision is taken.
{While this anticipation induces a slightly increased energy cost, it pays off because $\mathcal{R}_{{\rm Energy}}$ remains very close to one.}

{Since $\mathcal{R}_{{\rm Wait}}$ decreases with the system size~$N$, we may postulate that it approaches zero as $N\to\infty$. This requires a second-order limit analysis of the underlying Markov chains,
which we leave as future work.}

\section{Energy Optimization with Performance Guarantees}
\label{sec:numerical}


In this section, we use the fluid model in an optimization framework to trade off between performance and energy costs, and we numerically show that it accurately captures the stochastic dynamics of the finite ALBA system.
%
%
%
%
%
Let us focus on JIQ as load balancing algorithm and on the set (say $\mathcal{G}$) of scaling rules that satisfy the assumptions in Theorem~\ref{th3}. Note that these imply fluid optimality in the stationary regime.
As in, e.g., \cite{WiermanSpeedScaling}, let us define the cost function~$\mathcal{J}_{g}$ as the long-run time average of a linear combination between the power consumption $P(x)=c_{0,1} x_{0,1} + c_{0,2} x_{0,2} + c_{1,2} y_{1}$, $c_{i,j}>0$, and the average queue length per busy server $Q(x)=\frac{1}{y_1}\sum_i i x_{i,2}$ induced by the scaling rule~$g$, i.e.,
\begin{align}
\mathcal{J}_g \bydef \lim_{T\to\infty}  \frac{1}{T}\int_0^T \left( \kappa_1 P(x(t)) + \kappa_2 Q(x(t)) \right) {\rm d} t
\end{align}
where $\kappa_i\ge 0$, $i=1,2$;
one can think $\kappa_1$ in terms of \$/watt and $\kappa_2$ in terms of \$/job.
Then, Theorem~\ref{th3} implies that
\begin{align}
\inf_g \mathcal{J}_g = \mathcal{J}_{g^*} = \kappa_1 P(x^\star)+\kappa_2 Q(x^\star) = \kappa_1 c_{1,2}\lambda  + \kappa_2
\end{align}
for all $g^*\in \mathcal{G}$.
%
%
While all policies in $\mathcal{G}$ yield the same (optimal) cost,
their behavior is clearly different trajectory-wise. Depending on the application, a platform user has several options to single out a policy in $\mathcal{G}$ that satisfies a further level of optimization.
For instance, a substantial portion of the applications hosted in cloud networks have ultra-low delay requirements, as this may have important consequences on e-commerce sales.
On the other hand, also energy bills are equally important from both financial and environmental standpoints.
Here, a system manager may want to look for a scaling rule in~$\mathcal{G}$ such that
\begin{equation}
\label{Q_constraint}
Q(x(t))\le q,\quad \forall t\ge 0
\end{equation}
where~$q$ is related to the desired user-perceived performance guarantee;
by Little's law, \eqref{Q_constraint} is equivalent to a constraint on the mean response time.
In view of Remark~\ref{rem:optim}, one may consider the parameterized subset of scaling rules
\begin{align}
\label{gx_eta}
g(x) = \frac{1-\exp(-\frac{\eta}{\lambda} (\lambda - x_{1,2} -\beta x_{0,1})^+)}{1-\exp(-\eta)},\quad \eta>0,
%
%
%
%
\end{align}
which satisfy both Assumption~\ref{as3} and~\eqref{gx_optimal}.
Here, the control parameter $\eta>0$ indicates how aggressive the scaling rule is:
Rate-Idle is recovered when $\eta\downarrow 0$
and $g(x)=\I{\lambda \ge x_{1,2} +\beta x_{0,1}}$ when $\eta\to\infty$.
%
%
%
Then, one may search for the smallest (least aggressive) $\eta$ such that \eqref{Q_constraint} holds true.

The above problem can be easily addressed numerically within the proposed deterministic model.
Assume that the system is currently in a light-load condition, say $\lambda=0.25$, and that, as a result, it is dimensioned accordingly to save energy, say $x_{0,0}=1-\lambda-0.05$, with $x_{0,2}=0.05$ and $x_{1,2}=\lambda$; the extra 0.05 is meant to keep a reserve of idle-on servers ready to go.
Then, at time zero, an unexpected workload peak
occurs, and $\lambda=0.5$. Here, the platform needs to automatically adjust the service capacity while ensuring~\eqref{Q_constraint}.
Let us assume $q=2$,
$\alpha  = 0.35$,
$\beta   = 0.1$ and
$\gamma  = 0.025$.
The dashed lines in Figure~\ref{fig:Qle2} represent the
dynamics of the fluid queue lengths $Q(x(t))$ and scaling probabilities $g(x(t))$, for~$\eta=1,10^3$.
The corresponding continuous lines represent the average of ten simulations of the stochastic model $X^N(t)$ with $N=1000$.
%
%
%
%
%
\begin{figure*}
\centering
\includegraphics[width=19cm]{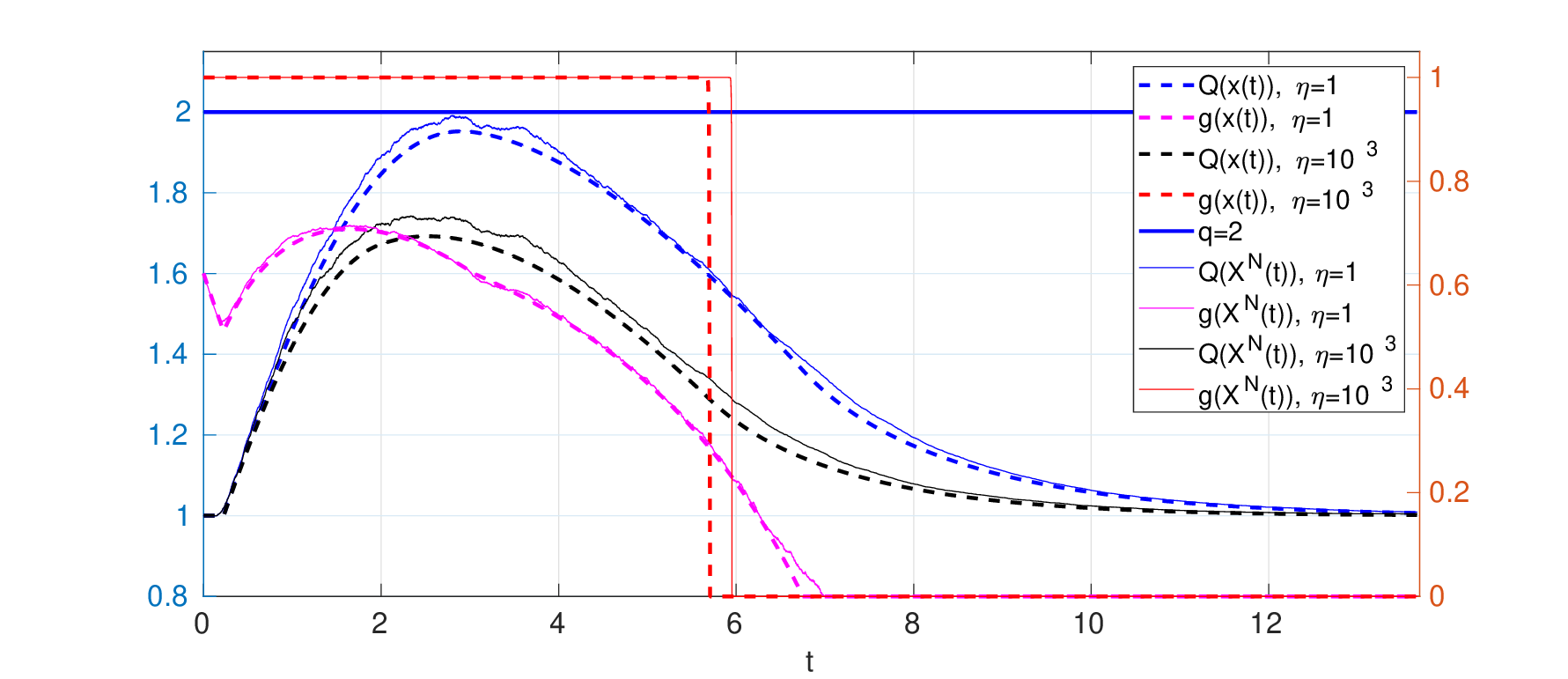}
%
%
\caption{Transient behavior of the queue lengths ($y$-axis on the left) and scaling probabilities ($y$-axis on the right) by varying~$\eta$, see~\eqref{gx_eta}, for both the fluid ($x(t)$) and stochastic ($X^N(t)$) models with $N=1000$.}
\label{fig:Qle2}
\end{figure*}
First, let us remark that the fluid model approximation accurately captures the dynamics of $X^N(t)$, though it slightly underestimates queue lengths and scaling probabilities.
Now, let us consider $\eta=1$.
Initially, queue lengths increase as expected due to the surge of demand and the scaling probability is large enough to drive the proportion of cold servers to zero. This explains the non-differentiability point of the trajectory of the scaling rule because the amount of initializing servers stops to grow.
Then,
the system has enough capacity to drain the load and at some point the rate at which servers become idle-on overflows the mean demand, i.e., $x_{1,2} +\beta x_{0,1}>\lambda$, so that eventually $g(x(t))=0$.
Finally, queue lengths assess to their asymptotic value $Q(x^\star)=1$
We conclude that $\eta=1$ is enough to make~\eqref{Q_constraint} holds true.
We also notice that the choice $\eta=10^3$, which essentially means to scale up resources at the maximum available rate $\alpha$ whenever $x_{1,2} +\beta x_{0,1}<\lambda$, has little impact on performance.
Nonetheless,
it should be clear that the larger the value of $\eta$, the larger the resulting time-average power consumption.



\section{Conclusion}
\label{sec:conclusions}

In cloud systems, load balancing and auto-scaling are key mechanisms to optimize both delay performance and energy consumption.
The focus of the existing literature has been on architectures where these mechanisms are synchronous or rely on a central queue. The novelty of our work is to consider an asynchronous and decentralized architecture.
Decentralization increases scalability and asynchronism does not force jobs to wait any time a scale-up decision is taken.

Our work provides a tractable framework to evaluate the performance of auto-scaling algorithms that are up to the platform user to design.
In our main result, we have identified a structural condition for asymptotic optimality that provides the platform user with some flexibility when designing an optimal scaling rule; see Remark~\ref{rem:optim}.
This can be exploited to develop new levels of optimization as we have shown in Section~\ref{sec:numerical}.
{By means of numerical simulations, we have show that the proposed asynchronous combination of JIQ and Rate-Idle provides a better delay performance than existing synchronous decentralized schemes while inducing almost the same energy consumption.}

%
%
%
%

We discuss some generalizations and {open questions}:
 \begin{itemize}


\item We have assumed that only one server at a time can be activated at each scaling time.
Our approach generalizes trivially to the case where a random number~$C$ of cold servers is selected, provided that the distribution of~$C$ does not depend on~$N$.
Mutatis mutandis, it is enough to replace $\alpha$ by $\alpha\, \E[C]$.

\item {Theorem~\ref{th1} generalizes trivially to a time-varying arrival rate setting if the arrival rate takes the form $\Lambda(t) N$ where $\Lambda(t)$ is a bounded positive real-valued function independent of~$N$.
This change only affects Lemma~1 of the supplementary material, whose proof directly generalizes by the functional strong law of large numbers for the Poisson process.
The resulting deterministic model is identical to the one in Definition~\ref{def1} except that $\lambda$ is replaced by $\Lambda(t)$.}


\item {From a theoretical point of view, it is interesting to prove the ``interchange of limits'' property.
More specifically, within JIQ and the asymptotically optimal condition identified in Theorem~\ref{th3}, the question is whether or not the invariant distribution of the underlying Markov chain concentrates on $x^\star$ when $N\to\infty$.
Numerical evidence indicates that this property holds true.
}



\item \review{The stability of the (finite) stochastic model is a difficult question to answer because the proposed ALBA framework is very general: the scale-up rule~$g$ satisfies mild conditions (see Assumption~\ref{as3}) and to come up with a stability result, one should take additional assumptions such as considering a specific scale-up policy.
Even within the simplest scale-up policy, i.e., Blind-$\theta$, and the simplest dispatching policy, i.e., where jobs are distributed to servers uniformly at random (or equivalently Power-of-$d$ with $d=1$), understanding whether or not the underlying Markov chain is positive recurrent is challenging.
Here, one may check that (natural adaptations of) classical Lyapunov functions used in queueing theory to investigate stability via Foster-Lyapunov theorem do not work.
Also, the utilization of  Dai's fluid framework~\cite{dai1995positive} is again complicated by the identification of a Lyapunov function.
Finally, the drift function does not preserve monotonicity and stochastic dominance arguments
cannot be applied.
}

\end{itemize}

\bibliographystyle{IEEEtran}

%



%

\begin{IEEEbiography}[{\includegraphics[width=1in,height=1in,clip,keepaspectratio]{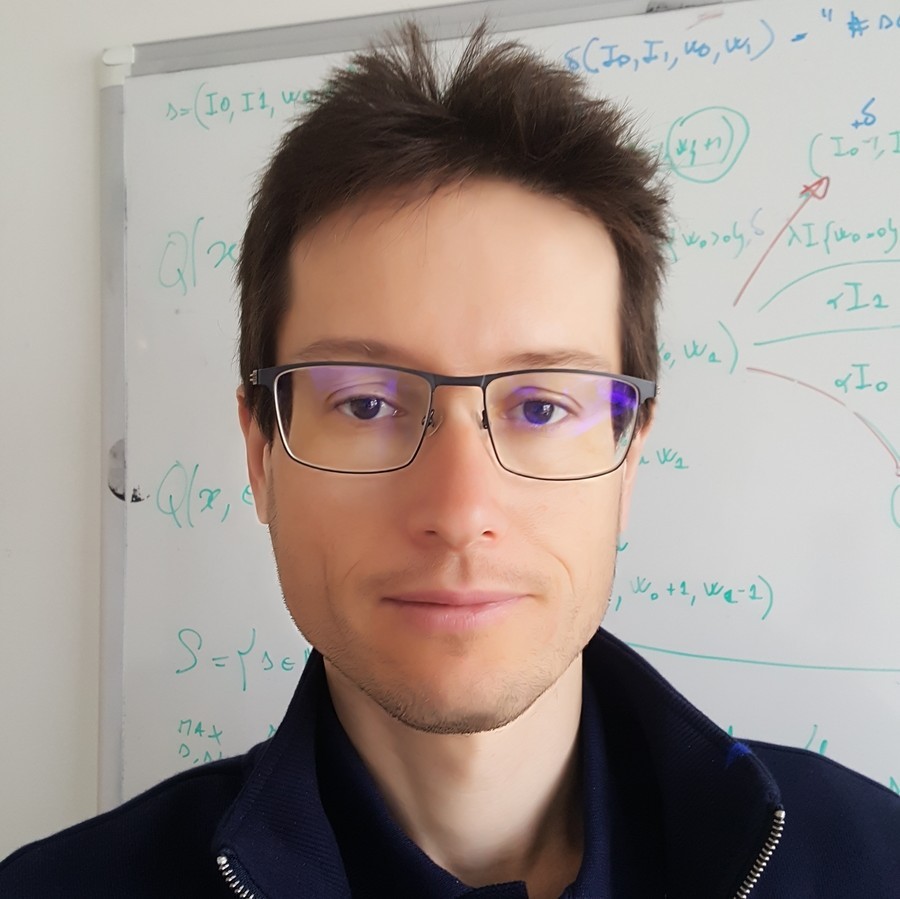}}]{Jonatha Anselmi}
is a tenured researcher at the French National Institute for Research in Digital Science and Technology (Inria), since 2014. Prior
to this, he was a researcher at the Basque Center for Applied
Mathematics and a postdoctoral researcher at Inria. He received his PhD
in computer engineering at Politecnico di Milano (Italy) in 2009. His research interests are in the broad field of decision-making under uncertainty, where computer science, applied maths and engineering intersect.
\end{IEEEbiography}

\vfill





\section{Proofs of Theorems~1,~2 and~3}
\label{sec:proofs}



\subsection{Theorem~1: connection between the fluid and the Markov models}
\label{sec:conn}

To prove Theorem~1, we follow two main steps.
First, we couple the processes $(X^{N}(t))_{t\in[0,T]}$, for all $N\in\mathbb{Z}_+$, on a common probability space and
show that limit trajectories exist and are Lipschitz continuous with probability one.
The arguments used in this step are routine~[7], [10], [28].
Then, we prove that limit trajectories are fluid solutions, which is the main technical difficulty, and here we develop arguments specific to the model under investigation.


\subsubsection{Coupled construction of sample paths}
\label{sec:constructionX}

Let $\mathcal{N}_c(t)$ denote a Poisson process of rate $c$.
We construct a probability space where the stochastic processes $\{(X^{N}(t))_{t\in[0,T]}\}_{N\ge 1}$ are coupled.
All the processes of interest can be constructed in terms of the following mutually independent primitive processes:

\begin{itemize}
\item $\mathcal{N}_{\phi} (t)$, a Poisson process of rate $\phi\bydef \lambda+1+\alpha+\beta+\gamma$.
This process is defined  on $(\Omega_E, \mathcal{A}_E, \mathbb{P}_E)$
and each jump of $\mathcal{N}_{\phi}(t)$ denotes the occurrence of an \emph{event}.

\item $(W_n)_n$, where the random variables $W_n$ are $\{0,1,2,3,4\}$-valued i.i.d. and such that
$\pp(W_n=0)=\lambda/\phi$,
$\pp(W_n=1)=1/\phi$,
$\pp(W_n=2)=\alpha/\phi$
$\pp(W_n=3)=\beta/\phi$ and
$\pp(W_n=4)=\gamma/\phi$.
This process is defined on $(\Omega_W, \mathcal{A}_W, \mathbb{P}_W)$ and will identify the \emph{type} of the $n$-th event. Specifically, $W_n=0$ indicates a job arrival, $W_n=1$ a potential job departure, $W_n=2$ a {scaling} time, $W_n=3$ a potential server initialization, i.e., a server completed the initialization phase, and $W_n=4$ a potential server expiration.

\item $(A_n^p)_n$, $p=1,\ldots,d$, $(D_n)_n$, $(I_n)_n$, $(E_n)_n$ and $(R_n)_n$, where
the random variables $A_n^p$, $D_n$, $I_n$, $E_n$ and $R_n$, for all $n$, are all i.i.d. and uniform over the interval $[0,1]$.
The rvs $A_n^p$, $D_n$, $I_n$, $E_n$ will be respectively used to select a server that
i) will process an arriving job,
ii) fires a departure,
iii) fires an initialization and
iv) fires an expiration.
The rv $R_n$ is related to the scaling rule and will decide whether a new server will be activated.
These processes are defined on $(\Omega_S, \mathcal{A}_S, \mathbb{P}_S)$;

\item $(X^N(0))_N$, the process of the initial conditions, where each random variable $X^N(0)$ takes values in~$\mathcal{S}_N$. This process is defined on $(\Omega_0, \mathcal{A}_0, \mathbb{P}_0)$.

\end{itemize}

Using that $\mathcal{N}_{\phi} (N t)$ and $\mathcal{N}_{\phi N} (t)$ are equal in distribution and the well-known fact that thinnings of a Poisson process produce independent Poisson processes, each process $\{(X^{N}(t))_{t\in[0,T]}\}$, $N\ge 1$, can be constructed on the product space, say $(\Omega,\mathcal{A},\mathbb{P})$.

Now, let $t_n$ be the time of the $n$-th jump of $\mathcal{N}_\phi(N t)$.
Let also $X^N(t^-)\bydef \lim_{s \uparrow t } X^N(s)$,
 $Y_i^N(t) \bydef \sum_{j=0}^i X_{j,2}^N(t)$ for all $i\ge 0$, $Y_{-1}^N(t)=0$
 and $\II{A}{x}=1$ if $x\in A$ and 0 otherwise.
\review{
Note that in the main text,  $y_i$ is defined as a tail sum while here $Y_i^N$ is a cumulative sum.
}
The coordinates of $X^N(t)$ are then given by~\eqref{X_construction} for all $i\ge 1$.
\begin{figure*}[!t]
\normalsize
%
%
%
%
\begin{subequations}
\label{X_construction}
\begin{align}
X_{0,0}^N(t) = &  X_{0,0}^N(0)
+ \frac{1}{N} \sum_{n=1}^{\mathcal{N}_\phi(N t)}
\left(  \I{W_n=4}
   \II{(0,X_{0,2}^N(t_n^-)]}{E_n}
- \I{W_n=2}
\I{X_{0,0}^N(t_n^{-})>0}
\, \II{(0,g(X^N(t_n^{-}))]}{R_n} \right) \\
X_{0,1}^N(t) = &  X_{0,1}^N(0)
+ \frac{1}{N} \sum_{n=1}^{\mathcal{N}_\phi(N t)}
\left(\I{W_n=2}
\I{X_{0,0}^N(t_n^{-})>0}
\, \II{(0,g(X^N(t_n^{-}))]}{R_n}
-  \I{W_n=3}
   \II{(0,X_{0,1}^N(t_n^-)]}{I_n}
   \right)
\\
X_{0,2}^N(t) =&   X_{0,2}^N(0)
+ \frac{1}{N} \sum_{n=1}^{\mathcal{N}_\phi(N t) }
\left(
  \I{{W_n=1}}\II{[Y_0^N(t_n^-),Y_1^N(t_n^-)]}{D_n}
- \I{{W_n=0}} H_{0}(t_n^{-})   +  \I{W_n=3}
   \II{(0,X_{0,1}^N(t_n^-)]}{I_n}
- \I{W_n=4}
   \II{(0,X_{0,2}^N(t_n^-)]}{E_n}
   \right)
   \\
X_{i,2}^N(t) =&  X_{i,2}^N(0)
+ \frac{1}{N} \sum_{n=1}^{\mathcal{N}_\phi(N t) }
\left(\I{W_n=0}\left( H_{i-1}(t_n^{-}) - H_{i}(t_n^{-})\I{i<B}\right)
+ \I{W_n=1}
\left(
 \II{(Y_i^N(t_n^-),Y_{i+1}^N(t_n^-)]}{D_n}
-
 \II{(Y_{i-1}^N(t_n^-),Y_i^N(t_n^-)]}{D_n}
\right)\right)
\end{align}
\end{subequations}
%
%
%
%
\hrulefill
\vspace*{4pt}
\end{figure*}
In \eqref{X_construction}, the $H_i$ terms depend on the load balancing scheme used:
within Power-of-$d$ (servers are selected with replacement)
\begin{multline}
\label{h_i_Pod_proof}
H_i (t_n^{-})
\bydef
\prod_{p=1}^d \II{(Y_{i-1}^N(t_n^-),1]}{A_n^p (1-X_{0,0}^N(t_n^-)-X_{0,1}^N(t_n^-) )} \\
-
\prod_{p=1}^d \II{(Y_{i}^N(t_n^-),1]}{A_n^p (1-X_{0,0}^N(t_n^-)-X_{0,1}^N(t_n^-) )} \in \{0,1\}
\end{multline}
and
within JBT-$d$
\begin{multline}
\label{h_i_JBTd_proof}
H_i (t_n^{-})
\bydef
\II{(Y_{i-1}^N(t_n^-),Y_{i}^N(t_n^-)]}{A_n^1 (1-X_{0,0}^N(t_n^-)-X_{0,1}^N(t_n^-) )}
\I{Y_d^N(t_n^-)=0} \\
 +
\II{(Y_{i-1}^N(t_n^-),Y_{i}^N(t_n^-)]}{A_n^1 Y_d^N(t_n^-)}
\I{i\le d}
\I{Y_d^N(t_n^-)>0} \in \{0,1\}.
\end{multline}
These expressions follow by uniformization of $X^N(t)$.
For instance,  $X_{0,0}^N(t)$ has an upward jump of size $1/N$ at time $t_n$ if the event occurring at that time is of type 4 (potential server expiration) and an idle-on server is actually selected at time $t_n^-$ by the uniformized process.
Analogously,  $X_{0,0}^N(t)$ decreases by $1/N$ at time $t_n$ if the event occurring at that time is of type~2, provided that at time $t_n^-$ the cold servers pool is not empty and the scaling rule applies.
Similar interpretations hold along the other coordinates of $X^N(t)$.


\subsubsection{Tightness of sample paths and Lipschitz property}
We now prove tightness of sample paths.
The lemmas in this section are routine and equivalent to the lemmas in~[11,Section~5.2].

Let us introduce the following formulas for quick reference.

\begin{lemma}
\label{GC_F}
Let $T>0$.
There exists $\mathcal{C}\subseteq \Omega$ such that $\mathbb{P}(\mathcal{C})=1$
and for all $\omega\in\mathcal{C}$:
\begin{equation}
\label{wqw8uw1}
\lim_{N\to\infty} \sup_{t\in[0,T]} | \frac{1}{N} \mathcal{N}_\phi(N t,\omega) -\phi t| =0\\
\end{equation}
\begin{equation}
\label{wqw8uw2}
\lim_{N\to\infty} \sup_{t\in[0,T]} \Big| \frac{1}{N} \sum_{n=1}^{\mathcal{N}_\phi(N t,\omega)} \I{W_n(\omega)=k} -  \pp(W_1=k)\,\phi \,t\,\Big| =0
\end{equation}
for all $ k\in\{0,\ldots,4\}$, and %
\begin{equation}
\label{wqw8uw3}
 \lim_{N\to\infty} \frac{1}{N} \sum_{n=1}^N
\prod_{p=1}^d \II{(a_p,b_p]}{c_pA_n^p}
= \prod_{p=1}^d \frac{b_p-a_p}{c_p}
\end{equation}
for all  $a_p,b_p,c_p\in [0,1], c_p>0, p=1,\ldots,d.$
\end{lemma}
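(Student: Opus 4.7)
\textbf{Proof plan for Lemma~\ref{GC_F}.} The three displays are all almost-sure statements about \emph{independent} primitive processes ($\mathcal{N}_\phi$, $(W_n)$, and the $(A_n^p)$), so the plan is to establish each on its own full-measure event and take the (countable) intersection to produce the single set $\mathcal{C}$ with $\mathbb{P}(\mathcal{C})=1$ on which all three displays hold simultaneously. None of the three is deep: each reduces to the strong law of large numbers, the main work being to upgrade pointwise convergence to the uniform (or parameter-uniform) statements required.

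For \eqref{wqw8uw1}, I would apply the functional strong law of large numbers for the Poisson process: for each rational $t\in[0,T]\cap\mathbb{Q}$, the classical SLLN gives $\mathcal{N}_\phi(Nt)/N\to\phi t$ almost surely, and taking a countable intersection yields a full-measure event on which this holds at every rational $t$. Because $t\mapsto \mathcal{N}_\phi(Nt)/N$ is nondecreasing and the limit $t\mapsto \phi t$ is continuous on $[0,T]$, a standard Dini-type monotonicity argument promotes this to uniform convergence on $[0,T]$.

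For \eqref{wqw8uw2}, I would use the decomposition
\[
\frac{1}{N}\sum_{n=1}^{\mathcal{N}_\phi(Nt)}\I{W_n=k} \;=\; \frac{\mathcal{N}_\phi(Nt)}{N}\cdot \frac{1}{\mathcal{N}_\phi(Nt)}\sum_{n=1}^{\mathcal{N}_\phi(Nt)}\I{W_n=k}.
\]
The first factor converges to $\phi t$ uniformly by step 1, and the second factor converges to $\mathbb{P}(W_1=k)$ almost surely by the SLLN applied to the i.i.d.\ Bernoulli sequence $(\I{W_n=k})_n$, using that $\mathcal{N}_\phi(Nt)\to\infty$ for each $t>0$. (Alternatively, one can invoke the Poisson thinning property: the counts of type-$k$ events form an independent Poisson process of rate $\mathbb{P}(W_1=k)\phi$, and step 1 applies verbatim.) Uniformity in $t\in[0,T]$ is then recovered by the same monotonicity-plus-continuous-limit argument, since the partial-sum process on the left is nondecreasing in $t$ and the limit $\mathbb{P}(W_1=k)\phi t$ is continuous.

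For \eqref{wqw8uw3}, the pointwise limit at any fixed $(a_p,b_p,c_p)$ is immediate from the SLLN applied to the i.i.d.\ sequence of products, whose common expectation is $\prod_{p=1}^d(b_p-a_p)/c_p$ (writing $\II{(a_p,b_p]}{c_p A_n^p}=\I{A_n^p\in(a_p/c_p,b_p/c_p]}$). To obtain a single null set valid simultaneously for \emph{all} parameter choices as required, I would first take a countable dense set of parameters, build the a.s.\ event there, and then extend to all real parameters using monotonicity of the product indicator in each $a_p,b_p$ together with continuity of the limit $\prod_p(b_p-a_p)/c_p$ in these parameters; this is a one-dimensional-per-coordinate Glivenko--Cantelli argument applied $d$ times and is routine. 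The mildly delicate case is $c_p$ close to $0$, which I would dispose of separately by noting that if some $c_p$ is small enough that $(a_p,b_p]\not\subseteq[0,c_p]$, the interval $(a_p/c_p,b_p/c_p]$ may exceed $[0,1]$ and the indicator simplifies, so one can reduce to the well-behaved case.

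The main obstacle, mild as it is, is parameter-uniform control in \eqref{wqw8uw3}: SLLN alone gives only pointwise convergence, and one must carefully choose a countable dense exceptional-free set and extend by monotonicity to handle all real $(a_p,b_p,c_p)$. Once all three full-measure events are obtained, their intersection $\mathcal{C}$ has probability one and the lemma follows.
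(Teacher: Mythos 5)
Your proposal is correct and follows essentially the same route as the paper, whose proof is a one-line citation of the functional SLLN for the Poisson process (for the first display), Poisson thinning (for the second), and the SLLN (for the third). You supply more detail than the paper does — in particular the Glivenko--Cantelli-type upgrade to parameter-uniform null sets in the third display, which the paper's appeal to the bare SLLN glosses over — but the underlying ideas are identical.
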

\begin{proof}
This lemma directly follows by applying the functional strong law of large numbers for the Poisson process (for \eqref{wqw8uw1}), the fact that thinnings of a Poisson process produce independent Poisson processes (for \eqref{wqw8uw2}) and the strong law of the large numbers (for \eqref{wqw8uw3}).
\end{proof}

We will work on a fixed $\omega$ that belongs to $\mathcal{C}$.

Let $x^0\in[0,1]$, sequences $A_N\downarrow 0$ and $B_N\downarrow 0$ be given.
Let also $D[0,T]$ denote the Skorokhod space endowed with the uniform metric
$d(x,y)\bydef \sup_{t\in[0,T]} |x(t)-y(t)|$, for all~$x,y\in D [0,T]$.
For $N\ge 1$, let also
\begin{multline*}
\mathcal{E}_N(B_N, A_N,x^0)  \bydef \big\{ x\in D[0,T] :  |x(0)-x^0|\le B_N, \\
|x(a)-x(b)| \le \phi|a-b|+A_N, \,\forall a,b\in[0,T]  \big\}
\end{multline*}
\begin{multline*}
\mathcal{E}_c(x^0)  \bydef \big\{ x\in D[0,T] : x(0)=x^0,  \\
|x(a)-x(b)| \le \phi|a-b|, \,\forall a,b\in[0,T]  \big\}.
\end{multline*}

The next lemma says that the sample paths along any coordinate is approximately Lipschitz continuous.
The proof is omitted because follows exactly the same standard arguments used in Lemma~5.2 of [11], which basically use the fact that the jumps of the Markov chain of interest are of the order of $1/N$ and that the evolution of such Markov chain on a given coordinate only depends on the evolution of such Markov chain on a finite number of other coordinates.

\begin{lemma}
\label{LM:tightness}
Fix $T>0$, $\omega\in\mathcal{C}$, and some $x^0\in\mathcal{S}_1$. Suppose that $\|X^N(\omega,0)-x^0\|_w \le \tilde{B}_N$, for some sequence $\tilde{B}_N\downarrow 0$. Then, there exists sequences $\left\{B_N^{(i,j)}\downarrow 0\right\}_{i,j} $ and $A_N\downarrow 0$ such that
\begin{align}
X_{i,j}^N(\omega,\cdot)\in \mathcal{E}_N(B_N^{(i,j)}, A_N,x^0),\quad \forall (i,j), \,\forall N.
\end{align}
\end{lemma}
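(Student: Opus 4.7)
\textbf{Proof plan for Lemma~\ref{LM:tightness}.} The two pieces of the claim are essentially independent, so I would handle them separately.

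\emph{Initial-condition bound.} The weighted norm gives
\[
|X_{i,j}^N(\omega,0) - x_{i,j}^0|^2 \le 2^{i+j} \, \|X^N(\omega,0)-x^0\|_w^2 \le 2^{i+j} \, \tilde B_N^2,
\]
so the choice $B_N^{(i,j)} := 2^{(i+j)/2} \tilde B_N$ works: for each fixed $(i,j)$, it is a deterministic sequence tending to $0$ and it dominates $|X_{i,j}^N(\omega,0)-x_{i,j}^0|$ by construction.

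\emph{Approximate Lipschitz bound.} The key observation is that in the coupled construction \eqref{X_construction} every coordinate $X_{i,j}^N(t)$ changes by at most $1/N$ per event and only when an event of the Poisson clock $\mathcal{N}_\phi(N\cdot)$ fires. Hence, for any $0\le a\le b\le T$,
\[
|X_{i,j}^N(\omega,b)-X_{i,j}^N(\omega,a)| \;\le\; \tfrac{1}{N}\bigl(\mathcal{N}_\phi(Nb,\omega)-\mathcal{N}_\phi(Na,\omega)\bigr),
\]
and this bound is uniform in $(i,j)$. Adding and subtracting $\phi b$ and $\phi a$ and applying the triangle inequality gives
\[
|X_{i,j}^N(\omega,b)-X_{i,j}^N(\omega,a)| \;\le\; \phi|b-a| + 2\sup_{t\in[0,T]}\Bigl|\tfrac{1}{N}\mathcal{N}_\phi(Nt,\omega)-\phi t\Bigr|.
\]
I would therefore define
\[
A_N \;:=\; 2\sup_{t\in[0,T]}\Bigl|\tfrac{1}{N}\mathcal{N}_\phi(Nt,\omega)-\phi t\Bigr|,
\]
which is deterministic once $\omega$ is fixed. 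Because $\omega\in\mathcal{C}$, Lemma~\ref{GC_F}, equation~\eqref{wqw8uw1}, guarantees $A_N\downarrow 0$. Combining the two bounds, $X_{i,j}^N(\omega,\cdot)\in\mathcal{E}_N(B_N^{(i,j)},A_N,x^0)$ for every $(i,j)$ and every $N$, which is the conclusion.

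\emph{Remarks on the argument.} There is no real obstacle here: the uniformization in Section~\ref{sec:constructionX} bundles \emph{all} jump mechanisms (arrivals, departures, scaling times, initializations, expirations) into a single Poisson clock of rate $\phi N$, so a coordinate-free total-variation bound is immediate and the infinite-dimensional state space causes no trouble. The only subtlety worth flagging is that $A_N$ is chosen independently of $(i,j)$ (which is what the statement demands), whereas $B_N^{(i,j)}$ inevitably grows in $(i,j)$ through the weight $2^{(i+j)/2}$; this is harmless since the lemma only asks for each $B_N^{(i,j)}$ to vanish as $N\to\infty$ for \emph{fixed} $(i,j)$.
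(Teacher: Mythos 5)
Your argument is correct and is essentially the same standard one the paper relies on (the paper omits the proof, citing Lemma~5.2 of \cite{tsitsiklis2018}): every coordinate jumps by at most $1/N$ and only at ticks of the single uniformized Poisson clock of rate $\phi N$, so the approximate Lipschitz bound follows from the functional SLLN \eqref{wqw8uw1} on $\mathcal{C}$, while the initial-condition bound is immediate from the definition of $\|\cdot\|_w$. The only cosmetic point is that to get $A_N\downarrow 0$ literally as a monotone sequence you should replace your $A_N$ by $\sup_{M\ge N}A_M$, which changes nothing else.
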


The next proposition shows that any sequence of sample paths $X^N(\omega,t)$ contains a further subsequence that converges in $D^\infty[0,T]$, endowed with the metric $d^{\mathbb{Z}_+}(x,y)\bydef \sup_{t\in[0,T]}\|x(t)-y(t)\|_w$, to a coordinate-wise Lipschitz continuous trajectory $x(t)$, as long as $\omega\in\mathcal{C}$.
The proof is routine and omitted because it is a repetition of the argument used in the proof of Proposition~11 in [28] (equivalently, see also Proposition~5.3 in [11]).

\begin{proposition}
\label{PR:tightness}
Fix $T>0$, $\omega\in\mathcal{C}$, and some $x^0\in\mathcal{S}_1$.
Suppose that $\|X^N(\omega,0)-x^0\|_w \le \tilde{B}_N$,
for some sequence $\tilde{B}_N\downarrow 0$. Then,
every subsequence of $\{X^N(\omega,\cdot )\}_{N=1}^\infty$ contains a further subsequence
$\{X^{N_k}(\omega,\cdot )\}_{k=1}^\infty$ such that
\begin{align}
\lim_{k\to\infty} d^{\mathbb{Z}_+}(X^{N_k},x) =0
\end{align}
where $x(0)=x^0$ and $x_{i,j}\in \mathcal{E}_c(x^0)$, for all $i$ and $j$.
\end{proposition}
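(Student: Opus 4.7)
The plan is to combine the near-equicontinuity bound from Lemma~\ref{LM:tightness} with a standard Arzelà--Ascoli plus diagonal-extraction argument, and then upgrade coordinate-wise uniform convergence to convergence in the metric $d^{\mathbb{Z}_+}$ by exploiting the fast decay of the weights $2^{-(i+j)}$.

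First, I would fix $\omega \in \mathcal{C}$ (which is measure-one by Lemma~\ref{GC_F}) and apply Lemma~\ref{LM:tightness} to obtain sequences $\{B_N^{(i,j)}\downarrow 0\}_{i,j}$ and $A_N \downarrow 0$ such that $X_{i,j}^N(\omega,\cdot) \in \mathcal{E}_N(B_N^{(i,j)}, A_N, x_{i,j}^0)$ for every $(i,j)$ and every $N$. For each single coordinate $(i,j)$, the family $\{X_{i,j}^N(\omega,\cdot)\}_{N\ge 1}$ is uniformly bounded (values lie in $[0,1]$), the initial values $X_{i,j}^N(\omega,0)$ converge to $x_{i,j}^0$, and the oscillation is controlled by $\phi|a-b| + A_N$. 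Hence this family is relatively compact in $(D[0,T], d)$ by a variant of Arzelà--Ascoli, and any subsequential limit lies in $\mathcal{E}_c(x_{i,j}^0)$, i.e.\ is Lipschitz with constant $\phi$ and passes through $x_{i,j}^0$ at time $0$.

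Next, given any subsequence of $\{X^N(\omega,\cdot)\}$, I would perform a Cantor-style diagonal extraction over the countable index set $\{(i,j)\}$: extract a sub-subsequence along which the first coordinate converges uniformly on $[0,T]$, then a further one along which the second coordinate converges, and so on, and take the diagonal. This yields a subsequence $\{X^{N_k}(\omega,\cdot)\}$ and a limit $x(\cdot)$ with $x_{i,j} \in \mathcal{E}_c(x_{i,j}^0)$ for every $(i,j)$, and such that $\sup_{t\in[0,T]}|X_{i,j}^{N_k}(\omega,t) - x_{i,j}(t)| \to 0$ for each fixed $(i,j)$.

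Finally, I would promote coordinate-wise uniform convergence to convergence in $d^{\mathbb{Z}_+}$. Fix $\varepsilon>0$. Since each coordinate satisfies $|X_{i,j}^{N_k}(\omega,t) - x_{i,j}(t)|^2 \le 1$ and the weights $2^{-(i+j)}$ are summable, there exists $M$ large enough that $\sum_{i+j>M} 2^{-(i+j)} < \varepsilon$; this controls the tail uniformly in $k$ and $t$. For the finitely many indices with $i+j \le M$, uniform coordinate-wise convergence ensures that $\sup_{t}\|X^{N_k}(\omega,t)-x(t)\|_w^2 < 2\varepsilon$ for all sufficiently large $k$, giving $d^{\mathbb{Z}_+}(X^{N_k},x)\to 0$.

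The only mildly delicate step is the truncation argument: one must check that the weighted tail can be bounded uniformly despite the state space being infinite-dimensional (when $B=+\infty$). This is immediate here because every $X_{i,j}^N$ lies in $[0,1]$ and $\sum 2^{-(i+j)}<\infty$, so the finite-dimensional Arzelà--Ascoli argument combined with the geometric weights suffices without any further control on the tail masses.
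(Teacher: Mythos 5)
Your argument is correct and is essentially the proof the paper has in mind: the paper omits it as routine, pointing to Proposition~11 of \cite{tsitsiklis2012}, which is exactly this combination of the approximate-Lipschitz bound from Lemma~\ref{LM:tightness}, a coordinate-wise Arzel\`a--Ascoli argument, a diagonal extraction over the countable index set, and a truncation using the summable weights $2^{-(i+j)}$ to pass to the metric $d^{\mathbb{Z}_+}$. No gaps; the tail control you flag is indeed immediate from the uniform bound $|X_{i,j}^N|\le 1$.
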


Since Lipschitz continuity implies absolute continuity, we have obtained that limit points of $X^N(t)$ exist and are absolutely continuous.
Since all sample paths of $X^N(t)$ take values in $\mathcal{S}$, these limit points must belong as well to  $\mathcal{S}$ because  $\mathcal{S}$ is a closed set.
Therefore, to conclude the proof of Theorem~1 it remains to show that the derivative of $x_{i,j}(t)$ is as in Definition~1 for all $i$ and $j$,
provided that $t$ is a regular time.
This is done in the next subsection and
will also prove that a fluid solution started in $x^{(0)}\in\mathcal{S}_1$ exists.

\subsubsection{Limit trajectories are fluid solutions}
\label{ascascs}

Fix $\omega\in\mathcal{C}$ and let $\{X^{N_k}(\omega,t)\}_{k=1}^\infty$ be a subsequence that converges to $\overline{x}$ (by Proposition~\ref{PR:tightness}), i.e.
\begin{equation}
\label{seb_conv}
\lim_{k\to\infty} \sup_{t\in [0,T]} \| X^{N_k}(\omega, t) - \overline{x}(t)\|_w = 0.
\end{equation}
In the remainder, we fix such $\omega\in\mathcal{C}$ such that \eqref{seb_conv} holds and for simplicity we drop the dependency on $\omega$.
Since $\overline{x}$ must be Lipschitz continuous (by Proposition~\ref{PR:tightness}), it is also absolutely continuous and to conclude the proof of Theorem~1, it remains to show that $\overline{x}(t)$ satisfies the conditions on the derivatives given in Definition~1 whenever $\overline{x}_{i,j}(t)$ is differentiable, for all $i,j$.

We say that $t$ is a point of differentiability (of $\overline{x}$) if $x_{i,j}(t)$ is differentiable for all $i,j$.

We will (implicitly) use several times the following elementary lemma, which holds true because $\overline{x}$ is a non-negative absolutely continuous function.
\begin{lemma}
If $\overline{x}_{i,j}(t)=0$ and $t$ is a point of differentiability of $\overline{x}_{i,j}$, then $\dot{\overline{x}}_{i,j}(t)=0$.
\end{lemma}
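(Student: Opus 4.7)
The plan is to use the fact that $\overline{x}_{i,j}$ is non-negative (being the coordinate-wise uniform limit, established in Proposition~\ref{PR:tightness}, of the non-negative processes $X_{i,j}^{N_k}$), so that any point $t$ at which $\overline{x}_{i,j}(t)=0$ is automatically a (global) minimum of $\overline{x}_{i,j}$ on $[0,T]$. Since a differentiable function must have vanishing derivative at an interior minimum, the conclusion will follow from a short Fermat-type argument on one-sided difference quotients.

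Concretely, I would proceed as follows. First, record that $X_{i,j}^{N_k}(s)\ge 0$ for every $s$ and $k$ by construction (each coordinate is a proportion of servers), and that uniform convergence on $[0,T]$ preserves non-negativity, so $\overline{x}_{i,j}(s)\ge 0$ on $[0,T]$. Next, assuming $t\in(0,T)$ is a point of differentiability of $\overline{x}_{i,j}$ with $\overline{x}_{i,j}(t)=0$, I would write the two one-sided difference quotients
\begin{equation*}
\frac{\overline{x}_{i,j}(t+h)-\overline{x}_{i,j}(t)}{h}=\frac{\overline{x}_{i,j}(t+h)}{h}\ge 0,\qquad \frac{\overline{x}_{i,j}(t-h)-\overline{x}_{i,j}(t)}{-h}=-\frac{\overline{x}_{i,j}(t-h)}{h}\le 0,
\end{equation*}
valid for all sufficiently small $h>0$. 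Passing to the limit $h\downarrow 0$, both quotients must converge to $\dot{\overline{x}}_{i,j}(t)$ by differentiability, so that $\dot{\overline{x}}_{i,j}(t)$ is simultaneously $\ge 0$ and $\le 0$, hence equal to zero.

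There is no real obstacle here beyond handling the boundary point $t=0$, at which differentiability is interpreted as the existence of a right derivative; in that case only the first inequality above is available, but it still gives $\dot{\overline{x}}_{i,j}(0)\ge 0$, while $\dot{\overline{x}}_{i,j}(0)\le 0$ follows from the fact that $\overline{x}_{i,j}$ is Lipschitz with $\overline{x}_{i,j}(0)=0$ and $\overline{x}_{i,j}\ge 0$, combined with the mean-value-theorem observation that if the right derivative at $0$ were strictly positive then $\overline{x}_{i,j}$ would be strictly negative at some point to the right once the argument is applied symmetrically; since this case is not actually invoked in the later applications of the lemma in Section~\ref{ascascs} (which work on interior regular times given by Rademacher's theorem for absolutely continuous functions), the essential two-sided argument suffices.
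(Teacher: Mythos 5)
Your proof is correct and is exactly the argument the paper has in mind: the paper states this lemma without proof, justifying it only by the remark that $\overline{x}$ is a non-negative absolutely continuous function, which is precisely your Fermat-type observation that a zero of a non-negative function is a global minimum and hence a point of vanishing two-sided derivative. The only blemish is the aside on $t=0$, where the claimed inequality $\dot{\overline{x}}_{i,j}(0)\le 0$ does not actually follow (e.g.\ $f(t)=t$ is non-negative, Lipschitz, vanishes at $0$, and has right derivative $1$), but as you yourself note this boundary case is never invoked, since the lemma is only applied at interior points of differentiability.
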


Let $\epsilon>0$.
By Lemma~\ref{LM:tightness}, there exists a sequence $A_{N_k}\downarrow 0$ such that
$X_{i,j}^{N_k}(\omega,u) \in [ \overline{x}_{i,j}(t) -\epsilon \phi - A_{N_k}, \overline{x}_{i,j}(t) +\epsilon \phi + A_{N_k} ]$,
for all $u\in[t,t+\epsilon]$.
Thus, for all $k$ sufficiently large,
$X_{i,j}^{N_k}(\omega,u) \in [ \overline{x}_{i,j}(t) -2\epsilon \phi, \overline{x}_{i,j}(t) +2\epsilon \phi  ]$, for all $u\in[t,t+\epsilon]$.
Thus, we have
\begin{align}
\label{eq:2_L_eps}
|X_{i,j}^{N_k}(u)-\overline{x}_{i,j}(t)| \le 2 \phi \epsilon,\quad \forall u \in [t,t+\epsilon]
%
%
\end{align}
for all $k$ sufficiently large.
In addition, using \eqref{eq:2_L_eps} and that $g$ is Lipschitz, we obtain
\begin{subequations}
\label{eq:2_L_eps_g}
\begin{align}
|g(X^{N_k}(u)) - g(\overline{x}(u) )|
& \le
L \| X^{N_k}(u)) - \overline{x}(u) \|_w \\
%
&\le 2 \phi \epsilon L \sqrt{\sum_{i,j} \frac{1}{2^{i+j}}}
=   2 \phi \epsilon L \sqrt{2},
\end{align}
\end{subequations}
for all $u \in [t,t+\epsilon]$,
where $L$ is the Lipschitz constant of the scaling rule~$g$.

We will refer to the following lemma, which is a straightforward consequence of~\eqref{eq:2_L_eps} and of the strong law of the large numbers.
In points where the fluid drift function is continuous, it will provide an expression for terms related to job departures, server initializations/departures and, in some cases, dispatching decisions.
\begin{lemma}
\label{lemma_ascasji1}
Fix $\omega\in\mathcal{C}$ and let \eqref{seb_conv} hold.
Then,
\begin{footnotesize}
\begin{align*}
 \lim_{\epsilon\downarrow 0}\lim_{k\to\infty} \frac{1}{\epsilon N_k} \sum_{n=\mathcal{N}_\phi(N_k t)+1}^{\mathcal{N}_\phi(N_k (t+\epsilon))}
 \I{W_n=1}  \II{(Y_{i-1}^N(t_n^-),Y_i^N(t_n^-)]}{D_n}
 \,=  \overline{x}_{i,2}(t) \\
 \lim_{\epsilon\downarrow 0}\lim_{k\to\infty} \frac{1}{\epsilon N_k} \sum_{n=\mathcal{N}_\phi(N_k t)+1}^{\mathcal{N}_\phi(N_k (t+\epsilon))}
\I{W_n=3}   \II{(0,X_{0,1}^N(t_n^-)]}{I_n}
 \,=  \beta \overline{x}_{0,1}(t) \\
 \lim_{\epsilon\downarrow 0}\lim_{k\to\infty} \frac{1}{\epsilon N_k} \sum_{n=\mathcal{N}_\phi(N_k t)+1}^{\mathcal{N}_\phi(N_k (t+\epsilon))}
\I{W_n=4}
   \II{(0,X_{0,2}^N(t_n^-)]}{E_n}
 \,=  \gamma \overline{x}_{0,2}(t).
\end{align*}
\end{footnotesize}
In addition,
\begin{footnotesize}
\begin{multline*}
\lim_{\epsilon\downarrow 0}\lim_{k\to\infty} \frac{1}{\epsilon N_k} \sum_{n=\mathcal{N}_\phi(N_k t)+1}^{\mathcal{N}_\phi(N_k (t+\epsilon))}
 \I{W_n=0}
 \II{(Y_{i-1}^N(t_n^-),Y_{i}^N(t_n^-)]}{A_n^1 Y_d^N(t_n^-)}
\I{i\le d}
%
\\=
\frac{\lambda \I{i\le d} \overline{x}_i(t)}{\sum_{j=0}^d \overline{x}_{j,2}(t)}
%
\end{multline*}
\end{footnotesize}
provided that $\sum_{j=0}^d \overline{x}_{j,2}>0$, and
\begin{align*}
 \lim_{\epsilon\downarrow 0}\lim_{k\to\infty} \frac{1}{\epsilon N_k} \sum_{n=\mathcal{N}_\phi(N_k t)+1}^{\mathcal{N}_\phi(N_k (t+\epsilon))}
 \I{W_n=0} H_{i}(X^N(t_n^{-}))
 &\,=  \lambda h_i(\overline{x})
\end{align*}
provided that Power-of-$d$ is used.
\end{lemma}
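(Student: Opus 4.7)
The plan is to establish each of the five limit identities by a common recipe: use the uniform closeness bound \eqref{eq:2_L_eps} to sandwich each state-dependent indicator between two indicators whose thresholds do not depend on $u$, then apply Poisson thinning together with the strong laws collected in Lemma~\ref{GC_F} to evaluate the limit in $k$, and finally let $\epsilon\downarrow 0$ to close the sandwich. The first step is immediate: for all $u\in[t,t+\epsilon]$ and all $k$ large enough, \eqref{eq:2_L_eps} implies $|Y_j^{N_k}(u)-Y_j(\overline{x}(t))|\le 2(j+1)\phi\epsilon$ and analogous $O(\epsilon)$ bounds for $X_{0,1}^{N_k}(u)$, $X_{0,2}^{N_k}(u)$ and $1-X_{0,0}^{N_k}(u)-X_{0,1}^{N_k}(u)$. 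Consequently, every indicator of the form $\II{(a(X^{N_k}(u)),b(X^{N_k}(u))]}{\xi}$ appearing in the lemma can be upper- and lower-bounded, uniformly in $u$, by an indicator of the form $\II{(a(\overline{x}(t))\pm O(\epsilon),b(\overline{x}(t))\pm O(\epsilon)]}{\xi}$.

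For the first three identities, after this sandwich, the remaining sum has the form $\sum_n\I{W_n=w}\II{A}{\xi_n}$ with $A$ a fixed interval and $(\xi_n)_n$ an i.i.d. uniform sequence independent of $(W_n)_n$. Poisson thinning ensures that $\{n:W_n=w\}$ forms a Poisson point process on the coupled clock of rate $\pp(W_1=w)\phi$, so by \eqref{wqw8uw1}--\eqref{wqw8uw2} the number of such indices in $[t,t+\epsilon]$ divided by $N_k$ converges to $\pp(W_1=w)\phi\,\epsilon$. Combining this with the strong law of large numbers for the i.i.d. marks $\II{A}{\xi_n}$ yields a limit of $\pp(W_1=w)\phi\,|A|\,\epsilon$. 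Dividing by $\epsilon$ and using $\pp(W_1=1)\phi=1$, $\pp(W_1=3)\phi=\beta$, $\pp(W_1=4)\phi=\gamma$, the sandwich collapses as $\epsilon\downarrow 0$ to the announced values $\overline{x}_{i,2}(t)$, $\beta\overline{x}_{0,1}(t)$ and $\gamma\overline{x}_{0,2}(t)$, respectively.

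The JBT-$d$ dispatching identity follows the same scheme after rewriting the indicator as $\II{(Y_{i-1}^N/Y_d^N,Y_i^N/Y_d^N]}{A_n^1}$; this rewriting is legitimate because the hypothesis $\sum_{j=0}^d\overline{x}_{j,2}(t)>0$ and \eqref{eq:2_L_eps} together keep $Y_d^{N_k}(u)$ uniformly bounded away from zero on $[t,t+\epsilon]$ for $k$ large. Sandwiching and thinning then produce the limit $\pp(W_1=0)\phi\,(Y_i(\overline{x}(t))-Y_{i-1}(\overline{x}(t)))/Y_d(\overline{x}(t))$, which equals $\lambda\overline{x}_{i,2}(t)/\sum_{j=0}^d\overline{x}_{j,2}(t)$ for $i\le d$, as claimed.

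The Power-of-$d$ identity is the main obstacle, because $H_i$ is the difference of two products of $d$ indicators, each involving an independent uniform $A_n^p$. The sandwich reduces matters to evaluating the limit of $\tfrac{1}{N_k}\sum_n\I{W_n=0}\prod_{p=1}^d\II{(a_p,b_p]}{c_pA_n^p}$ for constants $a_p,b_p,c_p$ at distance $O(\epsilon)$ from $Y_{i-1}(\overline{x}(t))$, $1$, and $y_0(\overline{x}(t))$ respectively. This is precisely the setting of the joint strong law \eqref{wqw8uw3}, applied however along the Poisson-thinned index set $\{n:W_n=0\}$. The delicate point is the justification of \eqref{wqw8uw3} along this thinned set: thinning preserves the i.i.d. uniform structure of $(A_n^p)_{p=1}^d$ restricted to $\{n:W_n=0\}$ and, by \eqref{wqw8uw2}, the cardinality of this set in $[t,t+\epsilon]$ grows linearly in $k$ at rate $\lambda\epsilon$, so \eqref{wqw8uw3} applies conditionally on that cardinality and yields $\lambda\,\epsilon\prod_p(b_p-a_p)/c_p$. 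Two applications (for $Y_{i-1}$ and for $Y_i$) and $\epsilon\downarrow 0$ then produce $\lambda[(y_i/y_0)^d-(y_{i+1}/y_0)^d]=\lambda h_i(\overline{x}(t))$ whenever $y_0(\overline{x}(t))>0$, as announced.
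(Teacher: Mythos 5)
Your proposal is correct and follows essentially the same route as the paper's proof: sandwich the state-dependent indicator thresholds via the Lipschitz-in-time bound \eqref{eq:2_L_eps}, evaluate the resulting fixed-threshold sums through the thinning/strong-law facts collected in Lemma~\ref{GC_F}, and let $\epsilon\downarrow 0$ to close the sandwich. You are in fact somewhat more explicit than the paper (which proves only the first identity and declares the rest identical) in justifying the application of \eqref{wqw8uw3} along the thinned index set $\{n:W_n=0\}$ for the Power-of-$d$ term and in restricting that identity to $y_0(\overline{x}(t))>0$, which is exactly how the lemma is used later.
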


\begin{proof}
Given in Section~\ref{sec:appendix_technical_lemmas}.
\end{proof}

The next proposition proves the desired condition on the amount of fluid of cold and initializing servers.
\begin{proposition}
\label{prop_asc9s}
Fix $\omega\in\mathcal{C}$, let \eqref{seb_conv} hold and assume that $t$ is a point of differentiability.
Then,
\begin{multline}
 \label{x00_a}
\dot{\overline{x}}_{0,0}  = \gamma \overline{x}_{0,2}(t) - \alpha \I{\overline{x}_{0,0}(t)>0}g(\overline{x}(t))
\\
-  \gamma \overline{x}_{0,2}(t)\,\I{\overline{x}_{0,0}(t)=0,\, \gamma \overline{x}_{0,2}(t) \le \alpha g(\overline{x}(t))}
\end{multline}
\begin{multline}
\label{x01_a}
\dot{\overline{x}}_{0,1}  = \alpha  g(\overline{x}(t)) \I{\overline{x}_{0,0}(t)>0} -\beta\overline{x}_{0,1}(t)
\\
+\gamma \overline{x}_{0,2}(t)\,\I{\overline{x}_{0,0}(t)=0,\, \gamma \overline{x}_{0,2}(t) \le \alpha g(\overline{x}(t))}.
\end{multline}
\end{proposition}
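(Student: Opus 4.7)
The plan is to analyze the increment
\begin{align*}
X_{0,0}^{N_k}(t+\epsilon) - X_{0,0}^{N_k}(t) = E^{N_k}_\epsilon - A^{N_k}_\epsilon,
\end{align*}
read off from \eqref{X_construction}, where $E^{N_k}_\epsilon$ collects the type-4 expiration increments and $A^{N_k}_\epsilon$ the type-2 scaling-time activations, gated by $\I{X_{0,0}^{N_k}(t_n^-)>0}$, over the interval $(t,t+\epsilon]$. Passing to $k\to\infty$ and then $\epsilon\downarrow 0$ produces $\dot{\overline{x}}_{0,0}(t)$ on the left, while on the right the expiration term converges to $\gamma\overline{x}_{0,2}(t)$ by Lemma~\ref{lemma_ascasji1}. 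The delicate quantity is $A^{N_k}_\epsilon$, whose indicator is responsible for the discontinuity of the drift at $\overline{x}_{0,0}=0$.

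If $\overline{x}_{0,0}(t)>0$, continuity of $\overline{x}_{0,0}$ together with the uniform convergence \eqref{seb_conv} ensures $X_{0,0}^{N_k}(s)>0$ uniformly on $[t,t+\epsilon]$ for $\epsilon$ small and $k$ large, so the indicator is identically one. The Lipschitz bound \eqref{eq:2_L_eps_g} on $g$ together with the thinning-plus-LLN argument of Lemma~\ref{GC_F} (also used in Lemma~\ref{lemma_ascasji1}) then gives $A^{N_k}_\epsilon/\epsilon \to \alpha g(\overline{x}(t))$, yielding $\dot{\overline{x}}_{0,0}(t) = \gamma\overline{x}_{0,2}(t) - \alpha g(\overline{x}(t))$ as in \eqref{x00_a}. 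The analogous computation for $X_{0,1}^{N_k}$, with the $-\beta\overline{x}_{0,1}(t)$ term supplied by Lemma~\ref{lemma_ascasji1}, yields \eqref{x01_a} in this regime.

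The substantive case is $\overline{x}_{0,0}(t)=0$ at an interior point of differentiability. Since $\overline{x}_{0,0}\ge 0$ attains a local minimum at $t$, differentiability forces $\dot{\overline{x}}_{0,0}(t)=0$, so it suffices to verify that the right-hand side of \eqref{x00_a} also evaluates to $0$, i.e., that $\gamma\overline{x}_{0,2}(t)\le\alpha g(\overline{x}(t))$. This is the Skorokhod-style reflection step: dropping the indicator inside $A^{N_k}_\epsilon$ yields an upper bound by the ungated type-2 count weighted by $g(X^{N_k}(t_n^-))$, which, combined with \eqref{eq:2_L_eps_g} and \eqref{wqw8uw2}, gives $\limsup_{\epsilon\downarrow 0}\limsup_{k\to\infty} A^{N_k}_\epsilon/\epsilon \le \alpha g(\overline{x}(t))$. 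On the other hand, writing $A^{N_k}_\epsilon/\epsilon = E^{N_k}_\epsilon/\epsilon - (X_{0,0}^{N_k}(t+\epsilon)-X_{0,0}^{N_k}(t))/\epsilon$ and invoking $\dot{\overline{x}}_{0,0}(t)=0$ forces the limit of $A^{N_k}_\epsilon/\epsilon$ to equal $\gamma\overline{x}_{0,2}(t)$. Combining these two identifications yields the desired inequality, proving \eqref{x00_a}. For \eqref{x01_a} in this case, the same identification of the limiting activation rate as $\gamma\overline{x}_{0,2}(t)$ (rather than $\alpha g(\overline{x}(t))$) produces the $\gamma\overline{x}_{0,2}(t)\,\I{\ldots}$ term, with $-\beta\overline{x}_{0,1}(t)$ again supplied by Lemma~\ref{lemma_ascasji1}.

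The main obstacle will be the Skorokhod-reflection step in the boundary case, which must simultaneously control $A^{N_k}_\epsilon$ from above via the Lipschitz continuity of $g$ and exploit the zero-derivative constraint imposed by nonnegativity of $\overline{x}_{0,0}$. This is the only place where the $1/N$-scale discontinuity of the drift enters the analysis in a nontrivial way.
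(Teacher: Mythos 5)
Your proposal is correct and follows essentially the same route as the paper's proof: the positive case is handled by forcing the gate indicator to be identically one via the Lipschitz/uniform-convergence bounds, and the boundary case $\overline{x}_{0,0}(t)=0$ is resolved by combining the zero-derivative constraint from nonnegativity with the upper bound $\alpha g(\overline{x}(t))$ obtained by dropping the gate, then identifying the limiting activation rate as $\gamma\overline{x}_{0,2}(t)$ to feed into the $(0,1)$ equation. The only cosmetic difference is that the paper phrases the key inequality $\gamma\overline{x}_{0,2}(t)\le\alpha g(\overline{x}(t))$ as a proof by contradiction ($\dot{\overline{x}}_{0,0}(t)>0$ would be impossible), whereas you derive it directly; the underlying argument is identical.
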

\begin{proof}
Assume that $\overline{x}_{0,0}(t)>0$ and let $\epsilon\in(0,\frac{\overline{x}_{0,0}(t)}{2 \phi})$.
Given that
\begin{align}
\label{t_nascsac}
t_n\in (t,t+\epsilon] \mbox{ if }  n \in \{ \mathcal{N}_\phi({N_k} t)+1, \ldots, \mathcal{N}_\phi({N_k} (t+\epsilon))\},
\end{align}
\eqref{eq:2_L_eps} implies that for all~$k$ sufficiently large,
$|X_{0,0}^{N_k}(t_n^-)-\overline{x}_{0,0}(t)| \le 2 \phi \epsilon < \overline{x}_{0,0}(t)$
and thus $X_{0,0}^{N_k}(t_n^-)  >0$.
We have shown that
\begin{equation}
\label{eq:krhfjc1}
\I{X_{0,0}^{N_k}(t_n^{-})>0} = 1, \quad \forall n \in \{ \mathcal{N}_\phi({N_k} t)+1, \ldots, \mathcal{N}_\phi({N_k} (t+\epsilon))\}
\end{equation}
for all $k$ sufficiently large.
Using \eqref{X_construction}, Lemma~\ref{lemma_ascasji1} and \eqref{eq:krhfjc1}, we have
\begin{subequations}
\label{asck7fgv_asc9s_PP}
\begin{align}
\nonumber
&\dot{\overline{x}}_{0,0}(t)
=\lim_{\epsilon\downarrow 0} \frac{1}{\epsilon} \lim_{k\to\infty} \left(X_{0,0}^{N_k}(t+\epsilon) - X_{0,0}^{N_k}(t)\right)\\
\nonumber
&=\lim_{\epsilon\downarrow 0} \lim_{k\to\infty} \frac{1}{\epsilon N_k}
\sum_{n=\mathcal{N}_\phi(N_k t)+1}^{\mathcal{N}_\phi(N_k (t+\epsilon))}
\bigg(
\I{W_n=4}
   \II{(0,X_{0,2}^{N_k}(t_n^-)]}{E_n}
\\
\label{asck7fgv_asc9s}
&\qquad\qquad - \I{W_n=2}
\I{X_{0,0}^{N_k}(t_n^{-})>0}
\, \II{(0,g(X^{N_k}(t_n^{-}))]}{R_n}\bigg)\\
\label{asck7fgv}
& = \gamma \overline{x}_{0,2}(t)
- \lim_{\epsilon\downarrow 0} \lim_{k\to\infty} \frac{1}{\epsilon N_k}
\sum_{{\scriptstyle n=\mathcal{N}_\phi(N_k t)+1:}\atop{\scriptstyle W_n=2}}^{\mathcal{N}_\phi(N_k (t+\epsilon))}
 \II{(0,g(X^{N_k}(t_n^{-}))]}{R_n}.
\end{align}
\end{subequations}
Since $t$ is a point of differentiability,
the double limit in the RHS of \eqref{asck7fgv} exists.
Then,~\eqref{eq:2_L_eps_g} implies that given~$\epsilon>0$ small enough,
$g(X^{N_k}(t_n^-)) \in [g(\overline{x}(t))-2 \phi \epsilon L \sqrt{2}, g(\overline{x}(t))+ 2 \phi \epsilon L \sqrt{2}]$
for all~$k$ sufficiently large.
Combining these bounds with Lemma~\ref{GC_F} and letting $\epsilon\downarrow 0$ (as in the proof of Lemma~\ref{lemma_ascasji1}), we obtain
\begin{align}
\label{lemma_g_formulazza}
\lim_{\epsilon\downarrow 0} \lim_{k\to\infty} \frac{1}{\epsilon N_k}
\sum_{{\scriptstyle n=\mathcal{N}_\phi(N_k t)+1:}\atop{\scriptstyle W_n=2}}^{\mathcal{N}_\phi(N_k (t+\epsilon))}
\, \II{(0,g(X^{N_k}(t_n^{-}))]}{R_n}
=\alpha g(\overline{x}(t)).
\end{align}
Similarly, on coordinates (0,1), we obtain
\begin{align}
\nonumber
&\dot{\overline{x}}_{0,1}(t)
=\lim_{\epsilon\downarrow 0} \frac{1}{\epsilon} \lim_{k\to\infty}
\left(
X_{0,1}^{N_k}(t+\epsilon) - X_{0,1}^{N_k}(t)
\right)\\
\nonumber
&=\lim_{\epsilon\downarrow 0} \lim_{k\to\infty}
\frac{1}{\epsilon N_k}
%
%
%
\sum_{{\scriptstyle n=\mathcal{N}_\phi(N_k t)+1:}\atop{\scriptstyle W_n=2}}^{\mathcal{N}_\phi(N_k (t+\epsilon))}
\I{X_{0,0}^{N_k}(t_n^{-})>0}
\, \II{(0,g(X^{N_k}(t_n^{-}))]}{R_n} \\
\nonumber
& \qquad-
\frac{1}{\epsilon N_k}
\sum_{{\scriptstyle n=\mathcal{N}_\phi(N_k t)+1:}\atop{\scriptstyle W_n=3}}^{\mathcal{N}_\phi(N_k (t+\epsilon))}
\I{W_n=3}
   \II{(0,X_{0,1}^{N_k}(t_n^-)]}{I_n}
\\
\nonumber
& = \alpha  g(\overline{x})
-\beta\overline{x}_{0,1}(t).
\end{align}

Now, let us assume that $\overline{x}_{0,0}(t)= 0$.
First, we notice that
\begin{align}
\label{asck7fgvascc}
&\dot{\overline{x}}_{0,0}(t)
 =
\gamma \overline{x}_{0,2}(t) -\lim_{\epsilon\downarrow 0} \lim_{k\to\infty}
\sum_{{\scriptstyle n=\mathcal{N}_\phi(N_k t)+1:}\atop{\scriptstyle W_n=2, X_{0,0}^N(t_n^{-})>0}}^{\mathcal{N}_\phi(N_k (t+\epsilon))}
\frac{ \II{(0,g(X^{N_k}(t_n^{-}))]}{R_n}}{\epsilon N_k}\\
\nonumber
& \ge \gamma \overline{x}_{0,2}(t) - \lim_{\epsilon\downarrow 0} \lim_{k\to\infty} \frac{1}{\epsilon N_k}
\sum_{{\scriptstyle n=\mathcal{N}_\phi(N_k t)+1:}\atop{\scriptstyle W_n=2}}^{\mathcal{N}_\phi(N_k (t+\epsilon))}
\II{(0,g(X^{N_k}(t_n^{-}))]}{R_n}
\\
&
= \gamma \overline{x}_{0,2}(t) - \alpha g(\overline{x})
\end{align}
where
%
the first equality follows by \eqref{asck7fgv_asc9s} and Lemma~\ref{lemma_ascasji1}, and
the last equality follows by~\eqref{lemma_g_formulazza}.
Thus, if $\overline{x}_{0,0}(t)=0$ and $\gamma \overline{x}_{0,2}(t) > \alpha g(\overline{x})$,
then by the previous inequality $\dot{\overline{x}}_{0,0}(t)>0$, which is not possible because
if $t$ is a point of differentiability and $\overline{x}_{0,0}(t)=0$ then necessarily $\dot{\overline{x}}_{0,0}(t)=0$ as $\overline{x}_{0,0}$ is a non-negative absolutely continuous function.
Thus, in a point of differentiability $t$ where $\overline{x}_{0,0}(t)=0$, we must have $\gamma \overline{x}_{0,2}(t) \le \alpha g(\overline{x})$.
and, necessarily, $\dot{\overline{x}}_{0,0}(t)=0$.
In this case, \eqref{asck7fgvascc} gives
\begin{multline}
\label{sc8as9ss}
\gamma \overline{x}_{0,2}(t) =
\lim_{k\to\infty} \frac{1}{\epsilon N_k}
\sum_{n=\mathcal{N}_\phi(N_k t)+1}^{\mathcal{N}_\phi(N_k (t+\epsilon))}\I{W_n=2}
\\ \times\I{X_{0,0}^N(t_n^{-})>0}
\, \II{(0,g(X^N(t_n^{-}))]}{R_n}.
\end{multline}
This term is interpreted as the amount of idle-on servers that become cold but instantly turn initializing.
Substituting~\eqref{sc8as9ss} in the previous equalities within the conditions $\gamma \overline{x}_{0,2}(t) \le \alpha g(\overline{x})$ and $\overline{x}_{0,0}(t)=0$, we obtain \eqref{x00_a} and \eqref{x01_a}.
\end{proof}

On the coordinates associated to warm servers, it remains to prove that
\begin{equation}
\label{scas8cas1}
\dot{\overline{x}}_{0,2}(t)
%
%
= \overline{x}_{1,2}(t) - \lambda h_0(\overline{x}(t)) + \beta \overline{x}_{0,1}(t) - \gamma \overline{x}_{0,2}(t)
\end{equation}
\begin{multline}
\label{scas8cas2}
\dot{\overline{x}}_{i,2}(t)
%
%
= \overline{x}_{i+1,2}(t)\I{i<B}-\overline{x}_{i,2}(t) \\
+ \lambda(h_{i-1} (\overline{x}(t))- h_i(\overline{x}(t))\I{i<B}),\, i\ge 1,
\end{multline}
whenever $t$ is a point of differentiability of $\overline{x}$. Let
\begin{align}
\label{eq:calH_i_def}
\mathcal{H}_i(t)\bydef \lim_{\epsilon\downarrow 0}\lim_{k\to\infty} \frac{1}{\epsilon N_k} \sum_{n=\mathcal{N}_\phi(N_k t) +1}^{\mathcal{N}_\phi(N_k (t+\epsilon)) }
\I{W_n=0}  H_{i}(t_n^{-})\ge 0,
\end{align}
which is interpreted as the rate at which jobs are assigned to warm servers with exactly $i$ jobs.
Using Lemma~\ref{lemma_ascasji1}  and \eqref{X_construction}, we have
\begin{subequations}
\label{scas8_HH}
\begin{align}
\dot{\overline{x}}_{0,2}(t)&
 =\lim_{\epsilon\downarrow 0} \frac{1}{\epsilon} \lim_{k\to\infty} \left(X_{0,2}^{N_k}(t+\epsilon) - X_{0,2}^{N_k}(t)\right)\\
& = \overline{x}_{1,2}(t) - \mathcal{H}_0(t) + \beta \overline{x}_{0,1}(t) - \gamma \overline{x}_{0,2}(t)\\
\dot{\overline{x}}_{i,2}(t)&
=\lim_{\epsilon\downarrow 0} \frac{1}{\epsilon} \lim_{k\to\infty}
\left( X_{i,2}^{N_k}(t+\epsilon) - X_{i,2}^{N_k}(t)\right)\\
&= \overline{x}_{i+1,2}(t)\I{i<B}-\overline{x}_{i,2}(t) + \mathcal{H}_{i-1}(t)- \mathcal{H}_{i}(t)\I{i<B}.
\end{align}
\end{subequations}

In the following, we need to show that $\mathcal{H}_i(t)=h_i(\overline{x}(t))$ where the $h_i$'s are as in Definition~1.
We treat the cases of Power-of-$d$ and JBT-$d$ separately.

\begin{lemma}
\label{lemma_Po_d}
Assume that Power-of-$d$ is applied.  Then,~\eqref{scas8cas1} and~\eqref{scas8cas2} hold true.
\end{lemma}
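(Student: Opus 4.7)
My plan is to establish $\mathcal{H}_i(t) = \lambda h_i(\overline{x}(t))$, with $h_i$ as in \eqref{h_i_Pod_def}, for every point of differentiability $t$ and every $i \geq 0$; substituting into \eqref{scas8_HH} then immediately yields \eqref{scas8cas1}--\eqref{scas8cas2}. The argument splits on whether the total warm-server mass $\overline{y}_0(t) := 1 - \overline{x}_{0,0}(t) - \overline{x}_{0,1}(t)$ is positive or zero at $t$.

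In the interior case $\overline{y}_0(t) > 0$, I would use the approximate Lipschitz bound \eqref{eq:2_L_eps} on the $(0,0)$ and $(0,1)$ coordinates to ensure that $y_0(X^{N_k}(t_n^-)) \geq \overline{y}_0(t)/2 > 0$ uniformly for all $t_n \in (t, t+\epsilon]$ and all $k$ sufficiently large. On this window, $H_i(t_n^-)$ defined in \eqref{h_i_Pod_proof} is the standard ``minimum-of-$d$-samples equals $i$'' event, whose conditional mean given $X^{N_k}(t_n^-)$ reduces to $(y_i^d - y_{i+1}^d)/y_0^d$. Combining \eqref{wqw8uw2} for $W_n = 0$ with \eqref{wqw8uw3} for the i.i.d.\ $(A_n^p)_n$ from Lemma~\ref{GC_F}, together with uniform convergence of $Y_j^{N_k}$ to $\overline{Y}_j(t)$ on the window and continuity of $y \mapsto y^d$, one passes to the limits $k \to \infty$ then $\epsilon \downarrow 0$ to obtain $\mathcal{H}_i(t) = \lambda(\overline{y}_i(t)^d - \overline{y}_{i+1}(t)^d)/\overline{y}_0(t)^d$, matching the first branch of \eqref{h_i_Pod_def}.

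In the boundary case $\overline{y}_0(t) = 0$, every $\overline{x}_{i,2}(t) = 0$, and non-negativity at the differentiability point forces $\dot{\overline{x}}_{i,2}(t) \geq 0$ for all $i$. Substituting into \eqref{scas8_HH} gives $\mathcal{H}_{i-1}(t) - \mathcal{H}_i(t) = \dot{\overline{x}}_{i,2}(t) \geq 0$ for $i \geq 1$, so $(\mathcal{H}_i(t))_i$ is non-increasing, while $\sum_i H_i(t_n^-) \leq 1$ yields $\sum_i \mathcal{H}_i(t) \leq \lambda$. I would then run a boundary argument analogous to the one in Proposition~\ref{prop_asc9s} handling $\overline{x}_{0,0}(t) = 0$: summing \eqref{scas8_HH} over $i$ gives the warm-pool mass balance $\dot{\overline{y}}_0(t) = \beta \overline{x}_{0,1}(t)$, while the sample-path fact that each dispatch consumes at most one freshly-initialized server yields $\mathcal{H}_0(t) \leq \min\{\lambda, \beta \overline{x}_{0,1}(t)\}$; a matching lower bound follows from a coupling argument showing that essentially every new warm server is claimed by an incoming job before it can expire, in the spirit of \eqref{sc8as9ss}. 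Combined with the non-increasing property and the uniform vanishing of $\sum_{j\geq 1} X_{j,2}^{N_k}$ on $(t, t+\epsilon]$, this forces $\mathcal{H}_i(t) = 0$ for $i \geq 1$, matching the second branch of \eqref{h_i_Pod_def}. The main obstacle is precisely this boundary case, where the Power-of-$d$ drift is discontinuous and establishing the lower bound on $\mathcal{H}_0(t)$ demands the same careful $1/N$-order bookkeeping used to derive \eqref{sc8as9ss}.
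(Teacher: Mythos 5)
Your overall architecture (substitute an identification of $\mathcal{H}_i$ into \eqref{scas8_HH}, splitting on whether the warm mass $\overline{y}_0(t)$ is positive or zero) is the same as the paper's, and your interior case $\overline{y}_0(t)>0$ is sound: it amounts to re-deriving the last assertion of Lemma~\ref{lemma_ascasji1} (conditional mean of $H_i$ in \eqref{h_i_Pod_proof} plus the law of large numbers from Lemma~\ref{GC_F}), which the paper simply invokes. The problem is the boundary case $\overline{y}_0(t)=0$, where your conclusion is wrong. At a point of differentiability where the non-negative coordinate $\overline{x}_{i,2}$ vanishes, the derivative is not merely $\ge 0$ but \emph{equal to zero} (the left and right difference quotients have opposite signs, and they must agree); this is exactly the elementary lemma the paper records before Lemma~\ref{lemma_ascasji1}. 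Feeding $\dot{\overline{x}}_{i,2}(t)=0$ into \eqref{scas8_HH} gives $\mathcal{H}_{i}(t)=\mathcal{H}_{i-1}(t)$ for every $i\ge 1$, i.e.\ the sequence $(\mathcal{H}_i(t))_i$ is \emph{constant}, equal to $\mathcal{H}_0(t)=\beta\overline{x}_{0,1}(t)$ (obtained from the $(0,2)$ coordinate). This is precisely the second branch of \eqref{h_i_Pod_def}, $h_i=\min\{\beta x_{0,1},\lambda\}$ \emph{for every} $i$: the freshly warmed servers are cascaded up through all levels. Your claim that $\mathcal{H}_i(t)=0$ for $i\ge 1$ contradicts your own identification $\mathcal{H}_0(t)=\beta\overline{x}_{0,1}(t)$: if $\beta\overline{x}_{0,1}(t)>0$ it would force $\dot{\overline{x}}_{1,2}(t)=\mathcal{H}_0(t)-\mathcal{H}_1(t)>0$ at a zero of $\overline{x}_{1,2}$, which is impossible at a point of differentiability, and it does not match \eqref{h_i_Pod_def}.

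Two further gaps in that case. First, you never dispose of the sub-case $\beta\overline{x}_{0,1}(t)>\lambda$; the paper shows via the bound $H_0(t_n^-)\le 1$ that $\dot{\overline{x}}_{0,2}(t)\ge \beta\overline{x}_{0,1}(t)-\lambda>0$ would then hold, so $t$ is simply not a point of differentiability and nothing needs to be proved there. Second, your "matching lower bound" on $\mathcal{H}_0(t)$ via an unspecified coupling argument in the spirit of \eqref{sc8as9ss} is left as a sketch; it is unnecessary, since $\mathcal{H}_0(t)=\beta\overline{x}_{0,1}(t)$ already follows exactly from $\dot{\overline{x}}_{0,2}(t)=0$ together with Lemma~\ref{lemma_ascasji1} applied to the initialization, expiration and departure terms in \eqref{X_construction}. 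In short: replace the inequality $\dot{\overline{x}}_{i,2}(t)\ge 0$ by the equality, run the resulting telescoping induction, and handle $\beta\overline{x}_{0,1}(t)>\lambda$ by contradiction; the coupling argument and the claim $\mathcal{H}_i(t)=0$ for $i\ge 1$ should be dropped.
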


\begin{proof}
If $\overline{x}_{0,0}+\overline{x}_{0,1}<1$, then the structure of the $H_i$'s in \eqref{h_i_Pod_proof} and Lemma~\ref{lemma_ascasji1} immediately give \eqref{scas8cas1} and  \eqref{scas8cas2}.
Now, let us assume that $\overline{x}_{0,0}+\overline{x}_{0,1}=1$.
On coordinate (0,2), in a point of differentiability we necessarily have $\dot{\overline{x}}_{0,2}=0$.
Using Lemma~\ref{lemma_ascasji1}  and \eqref{X_construction}, we obtain
\begin{align}
\label{H0_pod_1}
&\dot{\overline{x}}_{0,2}(t)
=\lim_{\epsilon\downarrow 0} \frac{1}{\epsilon} \lim_{k\to\infty} \left( X_{0,2}^{N_k}(t+\epsilon) - X_{0,2}^{N_k}(t) \right)\\
& = \beta \overline{x}_{0,1}(t) - \mathcal{H}_0(t) = 0.
\end{align}
Similarly, on coordinate $(1,2)$, Lemma~\ref{lemma_ascasji1} and \eqref{H0_pod_1} imply that in a point of differentiability we have
$\dot{\overline{x}}_{1,2}(t)
= \mathcal{H}_0(t) - \mathcal{H}_1(t) = 0$
and thus $\mathcal{H}_1(t)=\mathcal{H}_0(t)=\beta \overline{x}_{0,1}(t)$.
Then, on coordinate $(i,2)$ by induction we obtain $\mathcal{H}_i(t)=\mathcal{H}_{i-1}(t)=\beta \overline{x}_{0,1}(t)$.
On the other hand, we also have
\begin{align*}
\dot{\overline{x}}_{0,2}(t)
%
%
& =\beta \overline{x}_{0,1}(t) -
\lim_{\epsilon\downarrow 0} \lim_{k\to\infty}
\frac{1}{\epsilon N_k}
\sum_{{\scriptstyle n=\mathcal{N}_\phi(N_k t)+1:}\atop{\scriptstyle W_n=0}}^{\mathcal{N}_\phi(N_k (t+\epsilon))}
H_{0}(t_n^{-})\\
& \ge \beta \overline{x}_{0,1}(t) -\lambda
\end{align*}
where in the last inequality we have just used that $ H_{0}(t_n^{-})\le 1$.
Thus, if $\beta \overline{x}_{0,1}(t) >\lambda$, we get a contradiction and $t$ can not be a point of differentiability.
Substituting $\mathcal{H}_i(t)=\beta \overline{x}_{0,1}(t)$ in \eqref{scas8_HH} when $\beta \overline{x}_{0,1}(t) \le \lambda$, we obtain~\eqref{scas8cas1}-\eqref{scas8cas2}.
\end{proof}

The case of JBT-$d$ is more delicate than Power-of-$d$ because of the discontinuous structure of the $H_i$'s when $\sum_{j=0}^d X_{j,2}^N(t_n^-)=0$, see~\eqref{h_i_JBTd_proof}.
In addition to a more involved argument than the one presented in the proof of Lemma~\ref{lemma_Po_d}, which we will develop in Lemma~\ref{lemma_JBT_d} below, we need the following lemma, which we will use to determine an expression for $\mathcal{H}_i$ when $\sum_{j=0}^d \overline{x}_{j,2}(t)=0$.
\begin{lemma}
\label{lemma_ascasji2}
Assume that $\overline{x}(t)$ satisfies $\overline{x}_{0,0}(t)+\overline{x}_{0,1}(t)<1$.
Then, \eqref{asjc78dcckiujff} holds true for all $i$.
\begin{figure*}[!t]
\normalsize
%
%
%
\begin{multline}
\label{asjc78dcckiujff}
\lim_{\epsilon\downarrow 0}\lim_{k\to\infty} \frac{1}{\epsilon N_k} \sum_{n=\mathcal{N}_\phi(N_k t) +1}^{\mathcal{N}_\phi(N_k (t+\epsilon)) }
\I{W_n=0}  \II{(Y_{i-1}^{N_k}(t_n^-),Y_{i}^{N_k}(t_n^-)]}{A_n^1 (1-X_{0,0}^{N_k}(t_n^-)-X_{0,1}^{N_k}(t_n^-) )}
\I{\sum_{j=0}^d X_{j,2}^{N_k}(t_n^-)>0}\\
=
\frac{\overline{x}_{i,2}(t)}{1-\overline{x}_{0,0}(t)-\overline{x}_{0,1}(t)}
\lim_{\epsilon\downarrow 0}\lim_{k\to\infty} \frac{1}{\epsilon N_k} \sum_{n=\mathcal{N}_\phi(N_k t) +1}^{\mathcal{N}_\phi(N_k (t+\epsilon)) }
\I{W_n=0}
\I{\sum_{j=0}^d X_{j,2}^{N_k}(t_n^-)>0}.
\end{multline}
%
%
%
%
\hrulefill
\vspace*{4pt}
\end{figure*}
\end{lemma}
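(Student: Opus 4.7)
The plan is to exploit the fact that conditional on $W_n=0$ and the state $X^{N_k}(t_n^-)$, the random variable $A_n^1$ is independent of the past and uniformly distributed on $[0,1]$, so the indicator $\II{(Y_{i-1}^{N_k}(t_n^-),Y_{i}^{N_k}(t_n^-)]}{A_n^1 (1-X_{0,0}^{N_k}(t_n^-)-X_{0,1}^{N_k}(t_n^-))}$ evaluates to $1$ on an event whose conditional probability equals
\[
\frac{Y_i^{N_k}(t_n^-)-Y_{i-1}^{N_k}(t_n^-)}{1-X_{0,0}^{N_k}(t_n^-)-X_{0,1}^{N_k}(t_n^-)}=\frac{X_{i,2}^{N_k}(t_n^-)}{1-X_{0,0}^{N_k}(t_n^-)-X_{0,1}^{N_k}(t_n^-)}.
\]
The hypothesis $\overline{x}_{0,0}(t)+\overline{x}_{0,1}(t)<1$ keeps this denominator bounded away from zero in a neighborhood of $t$, which will be crucial.

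First I would use Lipschitz continuity of the limiting trajectory $\overline{x}$ together with the bound~\eqref{eq:2_L_eps} to ensure that, for $\epsilon$ small enough and $k$ large enough, the ratio $X_{i,2}^{N_k}(t_n^-)/(1-X_{0,0}^{N_k}(t_n^-)-X_{0,1}^{N_k}(t_n^-))$ stays within $O(\epsilon)$ of the constant
\[
r_i(t)\bydef \frac{\overline{x}_{i,2}(t)}{1-\overline{x}_{0,0}(t)-\overline{x}_{0,1}(t)},
\]
uniformly over $n\in\{\mathcal{N}_\phi(N_k t)+1,\dots,\mathcal{N}_\phi(N_k(t+\epsilon))\}$. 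Hence on this index range the summand on the left-hand side is bounded between $(r_i(t)\pm C\epsilon)\,\I{W_n=0}\I{\sum_{j=0}^d X_{j,2}^{N_k}(t_n^-)>0}\I{A_n^1\in J_n^\pm}$ for appropriately shrunk/enlarged subintervals $J_n^\pm\subseteq[0,1]$ of length $r_i(t)\pm C\epsilon$ that do not depend on $n$.

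Next I would appeal to the strong law of large numbers for the i.i.d.\ sequence $(A_n^1)$, exactly as in~\eqref{wqw8uw3} of Lemma~\ref{GC_F} and as used in the proof of Lemma~\ref{lemma_ascasji1}. Since $A_n^1$ is independent of $(W_n, X^{N_k}(t_n^-))$, conditioning on the latter and applying the SLLN along the subsequence allows the indicator $\II{\cdot}{A_n^1(\cdots)}$ to be replaced by its conditional mean up to an $o(1)$ error as $k\to\infty$. After this replacement the constant factor $r_i(t)$ (up to an $O(\epsilon)$ perturbation) pulls outside the sum, leaving exactly the second sum on the right-hand side of~\eqref{asjc78dcckiujff}.

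Finally I would take $\epsilon\downarrow 0$; the $O(\epsilon)$ perturbations of $r_i(t)$ vanish, yielding the claimed equality. The only mildly delicate point is the uniform-in-$n$ control of the ratio $X_{i,2}^{N_k}/(1-X_{0,0}^{N_k}-X_{0,1}^{N_k})$, which is what the hypothesis $\overline{x}_{0,0}(t)+\overline{x}_{0,1}(t)<1$ is designed to guarantee, so I expect no substantive obstacle beyond bookkeeping.
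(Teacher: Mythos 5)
Your overall strategy is the right one and matches the paper's in spirit: replace the dispatching indicator by its conditional probability $X_{i,2}^{N_k}(t_n^-)/(1-X_{0,0}^{N_k}(t_n^-)-X_{0,1}^{N_k}(t_n^-))$, use \eqref{eq:2_L_eps} and the hypothesis $\overline{x}_{0,0}(t)+\overline{x}_{0,1}(t)<1$ to pin this ratio within $O(\epsilon)$ of $\overline{x}_{i,2}(t)/(1-\overline{x}_{0,0}(t)-\overline{x}_{0,1}(t))$, sandwich the random interval between deterministic intervals $J^\pm$, and let $\epsilon\downarrow 0$. The gap is in the step where you ``appeal to the SLLN as in \eqref{wqw8uw3}.'' That lemma handles $\frac{1}{N}\sum_n \prod_p \II{(a_p,b_p]}{c_p A_n^p}$ with \emph{deterministic} endpoints and the \emph{full} index range. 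Here the sum is restricted by the extra factor $\I{\sum_{j=0}^d X_{j,2}^{N_k}(t_n^-)>0}$, which selects a \emph{random} index set. This set is predictable (it depends only on the past), but it is not independent of the sequence $(A_n^1)$ as a whole, since $X^{N_k}(t_n^-)$ is a function of $A_1^1,\ldots,A_{n-1}^1$; so ``conditioning on $(W_n, X^{N_k}(t_n^-))$ and applying the SLLN'' is not legitimate --- under that conditioning the $A_n^1$ are no longer i.i.d.\ uniform. Nor can you pass to the limit of the selecting indicator: the lemma is invoked precisely when $\sum_{j=0}^d\overline{x}_{j,2}(t)=0$, where $\I{\sum_j X_{j,2}^{N_k}(t_n^-)>0}$ keeps fluctuating and has no deterministic limit (which is exactly why the right-hand side of \eqref{asjc78dcckiujff} retains that sum unevaluated rather than replacing it by a constant).

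The paper closes this gap with a martingale argument: it sets $\Delta_n^N = Z_n^N - \E[Z_n^N\mid\mathcal{F}_n\setminus A_n^1]$, notes $\E[\Delta_n^N\mid\mathcal{F}_n\setminus A_n^1]=0$ and $|\Delta_n^N|\le 2$, and applies Azuma--Hoeffding plus Borel--Cantelli to get $\frac{1}{N}\sum_n\Delta_n^N\to 0$ almost surely; only then does it substitute the conditional expectation and use \eqref{eq:2_L_eps} to extract the constant ratio. A second, smaller omission in your write-up is the measure-theoretic bookkeeping: this almost-sure statement holds on a new full-measure event, so the set $\mathcal{C}$ from Lemma~\ref{GC_F} must be intersected with it before fixing $\omega$. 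Your argument is repairable, but as written the key law-of-large-numbers step over the random predictable index set is not justified by the tools you cite.
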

\begin{proof}
Given in Section~\ref{sec:appendix_technical_lemmas}.
\end{proof}

The following lemma proves the desired property in the case of JBT-$d$.

\begin{lemma}
\label{lemma_JBT_d}
Assume that JBT-$d$ is applied. Then,~\eqref{scas8cas1} and~\eqref{scas8cas2} hold true.
\end{lemma}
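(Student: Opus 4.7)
By \eqref{scas8_HH} it is enough to identify $\mathcal{H}_i(t)$ with $h_i(\overline{x}(t))$ from \eqref{h_i_JBTd_def} at every point of differentiability. The plan is to split the analysis into three cases according to the values of $y_0(\overline{x}(t))$ and $\sigma(t) \bydef \sum_{k=0}^d \overline{x}_{k,2}(t)$, and treat the one genuinely difficult configuration (third case below) via the ODE at~$t$ combined with Lemma~\ref{lemma_ascasji2}.

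\emph{Case 1: $y_0>0$ and $\sigma(t)>0$.} Using the Lipschitz property of $\overline{x}$ (Lemma~\ref{LM:tightness}) and \eqref{eq:2_L_eps}, for every sufficiently small $\epsilon$ and every sufficiently large $k$, $\sum_{j=0}^d X_{j,2}^{N_k}(t_n^-) > 0$ for all $n$ with $t_n\in(t,t+\epsilon]$. The first indicator in \eqref{h_i_JBTd_proof} then vanishes along the coupled path, and Lemma~\ref{lemma_ascasji1} yields $\mathcal{H}_i(t) = \lambda \overline{x}_{i,2}(t)/\sigma(t)\cdot\I{i\le d}$, matching the first branch of \eqref{h_i_JBTd_def}.

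\emph{Case 2: $y_0(\overline{x}(t))=0$ (no warm servers).} Here $\overline{x}_{k,2}(t)=0$ for all $k\ge 0$, hence at a point of differentiability $\dot{\overline{x}}_{k,2}(t)=0$ for every $k$. Feeding this into \eqref{scas8_HH} and proceeding by induction on~$k$ (exactly as in the proof of Lemma~\ref{lemma_Po_d}) gives $\mathcal{H}_i(t)=\beta\overline{x}_{0,1}(t)$ for every~$i$. The upper bound $\mathcal{H}_0(t)\le\lambda$ (obtained from $H_0\le 1$) then forces $\beta\overline{x}_{0,1}(t)\le\lambda$ at every differentiability point, yielding $\mathcal{H}_i(t)=\min\{\beta\overline{x}_{0,1}(t),\lambda\}$, i.e.\ the fourth branch.

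\emph{Case 3: $y_0(\overline{x}(t))>0$ and $\sigma(t)=0$.} This is the delicate case. Since $\overline{x}_{k,2}(t)=0$ for $0\le k\le d$ and $t$ is a point of differentiability, $\dot{\overline{x}}_{k,2}(t)=0$ for $0\le k\le d$. Plugging these identities into \eqref{scas8_HH} one obtains a triangular linear system that telescopes to
\begin{equation*}
\mathcal{H}_i(t)=\beta\overline{x}_{0,1}(t),\quad 0\le i\le d-1,\qquad \mathcal{H}_d(t)=\beta\overline{x}_{0,1}(t)+\overline{x}_{d+1,2}(t),
\end{equation*}
which matches the second branch of \eqref{h_i_JBTd_def} provided $(d+1)\beta\overline{x}_{0,1}(t)+\overline{x}_{d+1,2}(t)\le\lambda$; otherwise the total dispatching rate to warm servers would exceed $\lambda$, contradicting $\sum_i\mathcal{H}_i(t)\le\lambda$, and differentiability cannot hold. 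To evaluate $\mathcal{H}_i(t)$ for $i>d$, observe that in \eqref{h_i_JBTd_proof} only the first summand contributes (the second carries an $\I{i\le d}$), so
\begin{equation*}
\mathcal{H}_i(t)=\lim_{\epsilon\downarrow 0}\lim_{k\to\infty}\frac{1}{\epsilon N_k}\!\!\sum_{n=\mathcal{N}_\phi(N_k t)+1}^{\mathcal{N}_\phi(N_k(t+\epsilon))}\!\!\I{W_n=0}\,\II{(Y_{i-1}^{N_k},Y_i^{N_k}]}{A_n^1(1-X_{0,0}^{N_k}-X_{0,1}^{N_k})}\I{Y_d^{N_k}(t_n^-)=0}.
\end{equation*}
Applying Lemma~\ref{lemma_ascasji2} (in its $\I{Y_d=0}$ variant, proved by the same argument) gives $\mathcal{H}_i(t)=(\overline{x}_{i,2}(t)/y_0(t))\,\mathcal{J}(t)$ for a common nonnegative quantity $\mathcal{J}(t)$ independent of $i>d$. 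Summing over $i>d$ determines $\mathcal{J}(t)$ from the conservation law $\sum_{i\ge 0}\mathcal{H}_i(t)=\lambda$ (which holds because every arrival is routed to a warm server when $y_0>0$, a consequence of~\eqref{h_i_JBTd_proof} and the $k$-eventual positivity of $1-X_{0,0}^{N_k}-X_{0,1}^{N_k}$), giving $\mathcal{J}(t)=\lambda-(d+1)\beta\overline{x}_{0,1}(t)-\overline{x}_{d+1,2}(t)$ — nonnegative by the preceding paragraph — which coincides with the third branch of \eqref{h_i_JBTd_def}.

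The main obstacle is Case~3: the second summand of \eqref{h_i_JBTd_proof} is active precisely on the set $\{Y_d^{N_k}(t_n^-)>0\}$ which has fluid measure zero but non-negligible stochastic activity, so the identification of $\mathcal{H}_i(t)$ for $i\le d$ cannot be obtained from Lemma~\ref{lemma_ascasji1} alone and must be extracted indirectly from the ODE at a differentiability point. The bookkeeping between the two regimes, together with the verification of the constraint $(d+1)\beta\overline{x}_{0,1}+\overline{x}_{d+1,2}\le\lambda$, is the technical heart of the argument.
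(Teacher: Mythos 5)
Your case split and the bulk of the argument coincide with the paper's own proof: the same three regimes, the same Lipschitz/indicator argument reducing the first regime to Lemma~\ref{lemma_ascasji1}, the same reuse of the Lemma~\ref{lemma_Po_d} argument when no warm mass exists, and, in the delicate regime $y_0>0$, $\sum_{k\le d}\overline{x}_{k,2}=0$, the same indirect extraction of $\mathcal{H}_i$, $i\le d$, from the ODE at a differentiability point (the paper's \eqref{eq:Hi_sc9asc}), the same non-differentiability contradiction when $\overline{x}_{d+1,2}+(d+1)\beta\overline{x}_{0,1}>\lambda$, and the same martingale-factorization idea for $i>d$. Your ``$\I{Y_d=0}$ variant'' of Lemma~\ref{lemma_ascasji2} is indeed provable by the identical Azuma--Hoeffding argument, although the paper sidesteps the variant by writing $\I{Y_d^{N}=0}=1-\I{Y_d^{N}>0}$ and applying Lemma~\ref{lemma_ascasji1} to the unconstrained term and Lemma~\ref{lemma_ascasji2} as stated to the other.

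The one step you should shore up is how the common factor $\mathcal{J}(t)$ is pinned down. You determine it from the conservation law $\sum_{i\ge 0}\mathcal{H}_i(t)=\lambda$, justified pathwise by ``every arrival is routed to a warm server''. The pathwise statement $\sum_i H_i(t_n^-)=1$ is correct, but when $B=\infty$ passing from it to $\sum_{i\ge0}\mathcal{H}_i(t)=\lambda$ requires interchanging an infinite sum with the double limit defining $\mathcal{H}_i$; Fatou only yields $\sum_{i\ge0}\mathcal{H}_i(t)\le\lambda$, hence only an upper bound on $\mathcal{J}(t)$, and the lemma must cover $B=\infty$. The paper avoids this by working only with the finite sum $\sum_{i=0}^d H_i$, for which the exact identity \eqref{qwdqwdwdc} holds, and by showing via Lemma~\ref{lemma_ascasji1} that the extra term supported on $\{\sum_{j\le d}X^N_{j,2}=0\}$ is negligible; this identifies the limiting fraction of arrivals that see $Y_d^N>0$ with $\sum_{i=0}^d\mathcal{H}_i(t)$ and hence gives $\mathcal{J}(t)=\lambda-\overline{x}_{d+1,2}(t)-(d+1)\beta\overline{x}_{0,1}(t)$ exactly. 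The same one-line fix is available to you: write $\mathcal{J}(t)=\lambda-\lim\lim\frac{1}{\epsilon N_k}\sum_n\I{W_n=0}\I{Y_d^{N_k}(t_n^-)>0}$ and evaluate the subtracted limit through \eqref{qwdqwdwdc}, rather than invoking the full conservation law.
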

\begin{proof}
We analyze $\mathcal{H}_i$ and the resulting expression will be substituted in \eqref{scas8_HH}.
This will give~\eqref{scas8cas1} and~\eqref{scas8cas2}.

First, if $\overline{x}_{0,0}+\overline{x}_{0,1}=1$, the argument in the proof of Lemma~\ref{lemma_Po_d} gives i) $\mathcal{H}_i(t)=\beta \overline{x}_{0,1}(t)$ when $\beta \overline{x}_{0,1}(t) \le \lambda$
and ii) $t$ not a point of differentiability when $\beta \overline{x}_{0,1}(t) > \lambda$.
This gives ~\eqref{scas8cas1} and~\eqref{scas8cas2}  (when $\overline{x}_{0,0}+\overline{x}_{0,1}=1$) and in the remainder we assume that $\overline{x}_{0,0}+\overline{x}_{0,1}<1$.

Let us now assume that  $\sum_{j=0}^d \overline{x}_{j,2}(t)>0$
and let $\epsilon\in(0,\frac{\sum_{j=0}^d \overline{x}_{j,2}(t)}{2 \phi (d+1)})$.
Since $t_n\in (t,t+\epsilon]$ whenever $n \in \{ \mathcal{N}_\phi({N_k} t)+1, \ldots, \mathcal{N}_\phi({N_k} (t+\epsilon))\}$,
\eqref{eq:2_L_eps} and the triangular inequality imply that for all~$k$ sufficiently large
$|\sum_{j=0}^d X_{j,2}^{N_k}(t_n) - \overline{x}_{j,2}(t)| \le 2(d+1) \phi \epsilon < \sum_{j=0}^d \overline{x}_{j,2}(t)$
and thus $\sum_{j=0}^d X_{j,2}^{N_k}(t_n)  >0$.
We have shown that
\begin{footnotesize}
\begin{equation}
\label{eq:krhfjc12}
\I{\sum_{j=0}^d X_{j,2}^{N_k}(t_n^{-})>0} = 1, \forall n \in \{ \mathcal{N}_\phi({N_k} t)+1, \ldots, \mathcal{N}_\phi({N_k} (t+\epsilon))\}
\end{equation}
\end{footnotesize}
for all $k$ sufficiently large, given $\epsilon>0$ sufficiently small.
Substituting \eqref{eq:krhfjc12} in \eqref{h_i_JBTd_proof}
and applying Lemma~\ref{lemma_ascasji1}, we obtain~\eqref{scas8cas1} and~\eqref{scas8cas2} (under the conditions $\overline{x}_{0,0}+\overline{x}_{0,1}<1$ and  $\sum_{j=0}^d \overline{x}_{j,2}(t)>0$).

It remains to understand the terms $\mathcal{H}_i$ in the case where $\sum_{j=0}^d \overline{x}_{j,2}(t)=0$, which we assume in the remainder of the proof.

Suppose that~$t$ is a point of differentiability.
Then,
by applying Lemma~\ref{lemma_ascasji1} to~$X_{0,2}^N$ (see~\eqref{X_construction}), we obtain
\begin{align}
\nonumber
&\dot{\overline{x}}_{0,2}(t)
=\lim_{\epsilon\downarrow 0} \frac{1}{\epsilon} \lim_{k\to\infty} X_{0,2}^{N_k}(t+\epsilon) - X_{0,2}^{N_k}(t)
\\
\label{H0_JBTd_1}
& = \overline{x}_{1,2}(t)\I{d=0} + \beta \overline{x}_{0,1}(t) - \mathcal{H}_0(t),
\end{align}
and given that necessarily $\dot{\overline{x}}_{0,2}(t)=0$, we obtain
\begin{align}
\label{H0_JBTd_2}
\mathcal{H}_0(t) = \overline{x}_{1,2}(t)\I{d=0} + \beta \overline{x}_{0,1}(t).
\end{align}
Similarly, on coordinate $(i,2)$, with $0<i\le d$, we obtain
\begin{align}
\label{Hi_JBTd_1}
&\dot{\overline{x}}_{i,2}(t)
= \overline{x}_{i+1,2}(t)\I{i=d} + \mathcal{H}_{i-1}(t)- \mathcal{H}_i(t) =0.
\end{align}
By induction, this gives
$\mathcal{H}_i(t)=\mathcal{H}_0(t)=\beta \overline{x}_{0,1}(t)$ for all $i<d$ and $\mathcal{H}_d(t) = \beta \overline{x}_{0,1}(t)+\overline{x}_{d+1,2}(t)$, that is,
\begin{align}
\label{eq:Hi_sc9asc}
\mathcal{H}_i(t) =\beta \overline{x}_{0,1}(t)+\overline{x}_{d+1,2}(t)\I{i=d},\qquad i\le d.
\end{align}
We have proven \eqref{eq:Hi_sc9asc} under the hypothesis that $t$ was a point of differentiability but now we show that $\overline{x}(t)$ is not differentiable if $\lambda< \overline{x}_{d+1,2} + (d+1)\beta \overline{x}_{0,1}$.
Towards this purpose, first we notice that
\begin{align*}
&\sum_{i=0}^d \mathcal{H}_i(t)
 = \lim_{\epsilon\downarrow 0}\lim_{k\to\infty} \frac{1}{\epsilon N_k} \sum_{n=\mathcal{N}_\phi(N_k t) +1}^{\mathcal{N}_\phi(N_k (t+\epsilon)) }
\I{W_n=0}  \sum_{i=0}^d  H_{i}(t_n^{-})\\
& = \lim_{\epsilon\downarrow 0}\lim_{k\to\infty} \frac{1}{\epsilon N_k} \sum_{n=\mathcal{N}_\phi(N_k t) +1}^{\mathcal{N}_\phi(N_k (t+\epsilon)) }
\I{W_n=0}  \I{\sum_{j=0}^d X_{j,2}^N(t_n^-)>0}
\\
%
%
& \le \lim_{\epsilon\downarrow 0}\lim_{k\to\infty} \frac{1}{\epsilon N_k} \sum_{n=\mathcal{N}_\phi(N_k t) +1}^{\mathcal{N}_\phi(N_k (t+\epsilon)) }
\I{W_n=0}  =\lambda.
\end{align*}
Here, the first equality follows because the limits $\mathcal{H}_i(t)$ exist
and the second inequality follows by the fact that (recall the definition of $\mathcal{H}_i$ in \eqref{eq:calH_i_def})
\begin{multline}
\label{qwdqwdwdc}
\sum_{i=0}^d H_i (t_n^{-})
=
 \I{\sum_{j=0}^d X_{j,2}^N(t_n^-)>0}\\
 +
\II{(0,Y_{d}^N(t_n^-)]}{A_n^1 (1-X_{0,0}^N(t_n^-)-X_{0,1}^N(t_n^-) )}
\I{\sum_{j=0}^d X_{j,2}^N(t_n^-)=0}
\end{multline}
and by Lemma~\ref{lemma_ascasji1} because $\sum_{j=0}^d \overline{x}_{j,2}(t)=0$.
Then, using \eqref{eq:Hi_sc9asc}, we necessarily have
\begin{align}
\label{ascu8ascs}
\sum_{i=0}^d \mathcal{H}_i(t)
= \overline{x}_{d+1,2}(t) + (d+1) \beta \overline{x}_{0,1}(t) \le \lambda
\end{align}
and, given that necessarily $\mathcal{H}_i\ge 0$, we conclude that~$t$ can not be a point of differentiability whenever \eqref{ascu8ascs} does not hold true.

Now, we investigate $\mathcal{H}_i$ when $i>d$ and assuming that \eqref{ascu8ascs} holds as otherwise $\overline{x}(t)$ would not be differentiable.
We observe that
\begin{align*}
&\mathcal{H}_i(t)
=
\frac{\lambda \overline{x}_{i,2}(t)}{1-\overline{x}_{0,0}(t)-\overline{x}_{0,1}(t)} \\
& - \lim_{\epsilon\downarrow 0}\lim_{k\to\infty} \frac{1}{\epsilon N_k}
\sum_{{\scriptstyle n=\mathcal{N}_\phi(N_k t)+1:}\atop{\scriptstyle W_n=0}}^{\mathcal{N}_\phi(N_k (t+\epsilon))}
\II{(Y_{i-1}^N(t_n^-),Y_{i}^N(t_n^-)]}{A_n^1 (1-X_{0,0}^N(t_n^-)-X_{0,1}^N(t_n^-) )}\\
& \qquad \times \I{\sum_{j=0}^d X_{j,2}^N(t_n^-)>0}\\
&=
\frac{\lambda \overline{x}_{i,2}(t)}{1-\overline{x}_{0,0}(t)-\overline{x}_{0,1}(t)}
-
\frac{\overline{x}_{i,2}(t)}{1-\overline{x}_{0,0}(t)-\overline{x}_{0,1}(t)}
\\
&\quad \times \lim_{\epsilon\downarrow 0}\lim_{k\to\infty} \frac{1}{\epsilon N_k}
\sum_{{\scriptstyle n=\mathcal{N}_\phi(N_k t)+1:}\atop{\scriptstyle W_n=0}}^{\mathcal{N}_\phi(N_k (t+\epsilon))}
\I{\sum_{j=0}^d X_{j,2}^N(t_n^-)>0} \\
&=
\frac{\lambda \overline{x}_{i,2}(t)}{1-\overline{x}_{0,0}(t)-\overline{x}_{0,1}(t)}
-
\frac{\overline{x}_{i,2}(t)}{1-\overline{x}_{0,0}(t)-\overline{x}_{0,1}(t)}
\sum_{i=0}^d \mathcal{H}_i(t)\\
&=
\overline{x}_{i,2}(t) \frac{\lambda - \overline{x}_{d+1,2}(t) - (d+1) \beta \overline{x}_{0,1}(t) }{1-\overline{x}_{0,0}(t)-\overline{x}_{0,1}(t)}.
\end{align*}
In the first equality, we have used \eqref{h_i_JBTd_proof} and applied Lemma~\ref{lemma_ascasji1} to the definition of $\mathcal{H}_i$ in \eqref{eq:calH_i_def};
In the second, we have applied Lemma~\ref{lemma_ascasji2}.
In the third,
we have used \eqref{qwdqwdwdc}
and that
\begin{align*}
 0 &\le \lim_{\epsilon\downarrow 0}\lim_{k\to\infty} \frac{1}{\epsilon N_k}
\II{(0,Y_{d}^N(t_n^-)]}{A_n^1 (1-X_{0,0}^N(t_n^-)-X_{0,1}^N(t_n^-) )}
\I{\sum_{j=0}^d X_{j,2}^N(t_n^-)=0}\\
&   \le \lim_{\epsilon\downarrow 0}\lim_{k\to\infty} \frac{1}{\epsilon N_k}
\II{(0,Y_{d}^N(t_n^-)]}{A_n^1 (1-X_{0,0}^N(t_n^-)-X_{0,1}^N(t_n^-) )}=0
\end{align*}
with the last inequality following by Lemma~\ref{lemma_ascasji1} as $\sum_{j=0}^d \overline{x}_{j,2}(t)=0$;
in the fourth, we have substituted~\eqref{eq:Hi_sc9asc}.
This concludes the proof.
\end{proof}

Thus, we have shown that $\overline{x}$ is a fluid solution.

\subsection{Proof of Theorem~2: fixed points}
\label{sec:proof_FP}

We now prove Theorem~2.
By definition, $x\in\mathcal{S}_1$  is a fixed point if and only if
\begin{subequations}
\label{x_FP}
\begin{align}
\label{x00_FP}
0 & = \gamma {x}_{0,2} - \alpha g\I{{x}_{0,0}>0}
-  \gamma {x}_{0,2}\,\I{{x}_{0,0}=0,\, \gamma {x}_{0,2} \le \alpha g}\\
\label{x01_FP}
0 & = \alpha  g \I{{x}_{0,0}>0} -\beta{x}_{0,1}
+\gamma {x}_{0,2}\,\I{{x}_{0,0}=0,\, \gamma {x}_{0,2} \le \alpha g} \\
\label{x02_FP}
0 &= x_{1,2} -  h_0(x) + \beta x_{0,1} - \gamma x_{0,2} \\
\label{xi2_FP}
0 &= x_{i+1,2} - x_{i,2} +  h_{i-1} (x)- h_i(x), \quad i\ge 1.
\end{align}
\end{subequations}
Together with $\|x\|=1$, we now show that these conditions coincide with (7)-(9).

If i) $x_{0,0}=0$ and $\gamma {x}_{0,2} > \alpha g$,
or
if ii) $x_{0,0}+x_{0,1}=1$,
then we easily observe that $x$ cannot be a fixed point.
Therefore, in the following we exclude these conditions.
Now,
summing \eqref{x00_FP} and \eqref{x01_FP}, we obtain
\begin{equation}
\label{ajvdfu8bf}
\beta x_{0,1} = \gamma x_{0,2}
\end{equation}
which gives (7b).
Then,  (7c) and  (7d) directly follow from \eqref{x00_FP} and  \eqref{x01_FP}.

Substituting \eqref{ajvdfu8bf} in \eqref{x02_FP}, the conditions \eqref{x02_FP}-\eqref{xi2_FP} become
\begin{subequations}
\label{standard_FP}
\begin{align}
0 &= x_{1,2} - h_0(x)\\
\label{standard_FP2}
0 &= x_{i+1,2}  -x_{i,2} +  h_{i-1} (x)- h_i(x) , \quad i\ge 1,
\end{align}
\end{subequations}
and taking summations
\begin{align}
\label{sj8x}
x_{i,2} = h_{i-1}(x), \quad i\ge 1.
\end{align}
The equations in~\eqref{standard_FP} are interpreted as the mean-field fixed-point equations associated to Power-of-$d$ and JBT-$d$ when the number of servers is $N y_0$ instead of $N$; we recall that $y_i=\sum_{i\ge 0} x_{i,2}$ is the proportion of warm servers with at least~$i$ jobs.
%
Within Power-of-$d$, one can directly check that for any given $x_{0,2}$,~\eqref{sj8x} holds if and only if $x_{i,2}$
is given by (8) and that, after a substitution, this gives~$\sum_{i\ge 1 } x_{i,2}=\lambda$ so that~(7a) must hold true.
The following lemma,
given in Section~\ref{sec:appendix_technical_lemmas},
handles the more delicate case of JBT-$d$.

\begin{lemma}
\label{asjch7rrr}
Within JBT-$d$, for any given $x_{0,2}$, \eqref{standard_FP} holds if and only if $x_{i,2}$ satisfies (9a)-(9e).
In addition, (7a) holds true.
\end{lemma}

Therefore, the conditions in \eqref{x_FP} are equivalent to~(7)-(9).
This proves the first statement of Theorem~2.
%

Now, under Assumption~3, $x_{i,2}$ is a function of $x_{0,2}$, for all $i\ge 1$, and we write $x_{i,2}$ as a shorthand notation for $x_{i,2}(x_{0,2})$.
Using \eqref{ajvdfu8bf},
we can then focus only on the following conditions:
\begin{subequations}
\label{suc7dg}
\begin{align}
\label{suc7dg_a}
x_{0,0} + \left(\frac{\gamma}{\beta} + 1\right)x_{0,2} &= 1  -  \lambda \\
\gamma x_{0,2}& \le \alpha g,\quad \mbox{ if } x_{0,0}=0 \\
\gamma x_{0,2}& = \alpha g,\quad  \mbox{ if } x_{0,0}>0.
\end{align}
\end{subequations}
Here, we notice that
$(x_{0,0}^\circ,x_{0,2}^\circ) = \big(0, \frac{\beta}{\beta+\gamma}(1-\lambda)\big)$
uniquely solves~\eqref{suc7dg}
if
\begin{align}
\label{casei}
%
\left(\frac{1}{\beta}+\frac{1}{\gamma}\right) \alpha g(x^\circ) \ge  1  -  \lambda
\end{align}
where $x^\circ$ is uniquely determined by $(x_{0,0}^\circ,x_{0,2}^\circ)$. 
So, let us assume that \eqref{casei} does \emph{not} hold true.
Then, if a point $(x_{0,0},x_{0,2})\in[0,1)^2$ that solves \eqref{suc7dg} exists,
then necessarily $x_{0,0}>0$ as otherwise $x_{0,2}=x_{0,2}^\circ$ (by \eqref{suc7dg_a}) and \eqref{casei} would hold, contradicting the hypothesis.
This proves the second part of Theorem~2.


\subsection{Proof of Theorem~3: fluid optimality}

\label{sec:proof_AO}


The non-linear structure taken by the $h_i$'s when $x_{0,2}=0$, see~(6), complicates the analysis and the identification of a Lyapunov function.
%
For this reason, our strategy is based on a divide-and-conquer approach.
This will actually provide insights about the dynamics followed by fluid solutions.
For simplicity, we provide a proof assuming that~$B<\infty$, which is essentially equivalent to assume that $x_{i,2}(0)=0$ for all $i$ large enough; this is not critical as $x_{i,2}^\star=0$ for all $i\ge 2$.
%
%

Let
$\overline{Q}(x)\bydef \sum_{i= 1}^Bix_{i,2}$,
i.e.,
the overall number of jobs in the system in state~$x$.
The following lemma
gives a property on the time derivative of $\overline{Q}(x(t))$.

\begin{lemma}
\label{lemma:Qdot}
Let $x(t)$ be a fluid solution induced by JIQ such that $x(0)\in\mathcal{S}_1$ and $B<\infty$. If~$t$ is a point of differentiability, then
\begin{equation}
\dot{\overline{Q}}(x(t)) = \lambda - y_1(t).
\end{equation}
\end{lemma}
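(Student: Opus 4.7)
The plan is a direct computation: differentiate $\overline{Q}(x(t))=\sum_{i=1}^{B} i\,x_{i,2}(t)$, substitute the fluid equation \eqref{xi2_def}, and simplify via two telescoping sums. What remains is the flow-balance identity $\dot{\overline{Q}}=\sum_k h_k(x)-y_1$, so the content of the lemma reduces to showing that under JIQ the total dispatching rate equals~$\lambda$ at every regular point.

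First I would write
\[
\dot{\overline{Q}}(x(t))=\sum_{i=1}^{B} i\bigl[x_{i+1,2}\I{i<B}-x_{i,2}+h_{i-1}(x)-h_i(x)\I{i<B}\bigr].
\]
The departure terms telescope via the index shift $j=i+1$ to $\sum_{j=2}^{B}(j-1)x_{j,2}-\sum_{i=1}^{B} i\,x_{i,2}=-\sum_{i=1}^{B} x_{i,2}=-y_1$; the same Abel rearrangement applied to the dispatching terms yields $\sum_{i=1}^{B} i\,h_{i-1}-\sum_{i=1}^{B-1} i\,h_i=\sum_{k=0}^{B-1} h_k(x)$. Combining these,
\[
\dot{\overline{Q}}(x(t))=\sum_{k=0}^{B-1} h_k(x(t))-y_1(t),
\]
and it remains to verify $\sum_{k=0}^{B-1} h_k(x(t))=\lambda$ at points of differentiability by case analysis on \eqref{h_i_JBTd_def} specialized to $d=0$. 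If $y_0>0$ and $x_{0,2}>0$, the definition immediately gives $h_0=\lambda$ and $h_k=0$ for $k\geq 1$. If $y_0>0$, $x_{0,2}=0$ and $x_{1,2}+\beta x_{0,1}\leq\lambda$, then $\sum_{k\geq 1} x_{k,2}=y_0$ yields $h_0+\sum_{k\geq 1} h_k=(x_{1,2}+\beta x_{0,1})+(\lambda-x_{1,2}-\beta x_{0,1})=\lambda$.

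The main obstacle lies in the remaining \emph{a priori} possibilities at a point of differentiability: $y_0(t)=0$, or $y_0(t)>0$ with $x_{0,2}(t)=0$ but $x_{1,2}(t)+\beta x_{0,1}(t)>\lambda$. I expect to rule both out by a boundary argument: since $x_{0,2}\geq 0$ and $x_{0,2}(t)=0$ at such a~$t$, absolute continuity forces $\dot{x}_{0,2}(t)=0$, yet \eqref{x02_def} combined with the hypothesis yields $\dot{x}_{0,2}(t)=x_{1,2}(t)+\beta x_{0,1}(t)>0$, a contradiction. The case $y_0=0$ propagates similarly through \eqref{x01_def}--\eqref{x00_def}, ultimately contradicting Assumption~\ref{as3} (which forces $g(x)>0$ at $x_{0,0}=1$). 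A further subtlety is the buffer boundary $x_{B,2}(t)>0$, where a positive blocking rate would nominally break the identity; I would address this by showing that JIQ keeps $x_{B,2}(t)=0$ for all~$t$ whenever $x_{B,2}(0)=0$, or by appealing to the conditions under which the lemma is invoked in the proof of Theorem~\ref{th3}.
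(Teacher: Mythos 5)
Your proposal is correct and follows essentially the same route as the paper's proof: the telescoping identity $\dot{\overline{Q}}=\sum_{k=0}^{B-1}h_k(x)-y_1$, the case analysis on \eqref{h_i_JBTd_def} specialized to $d=0$, and the exclusion of the problematic configurations ($x_{0,2}=0$ with $x_{1,2}+\beta x_{0,1}>\lambda$, and $y_0=0$) by showing that $t$ could not then be a point of differentiability, using $\dot{x}_{0,2}(t)=0$ in the first case and $x_{0,0}(t)=1$ together with Assumption~\ref{as3} in the second. The buffer-boundary subtlety you flag is genuine but is not addressed in the paper's proof either, which sums $\frac{x_{i,2}}{y_0}$ over all $i\ge 1$ and writes $\frac{y_1}{y_0}$, tacitly dropping the blocked flow $\frac{x_{B,2}}{y_0}(\lambda-x_{1,2}-\beta x_{0,1})^+$; so on that point your treatment (restricting to $x_{B,2}(t)=0$ or to the regime in which the lemma is invoked) is, if anything, the more careful one.
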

\begin{proof}
First, we notice that
\begin{align}
\label{asckj8ff}
\dot{\overline{Q}}(x(t))
=  \sum_{i\ge 1} i \dot{x}_{i,2} (t)
 = - y_1 + \sum_{i=0}^{B-1} h_i(x(t))
%
\end{align}
where the second equality follows by applying Definition~1.
Now, we treat the cases $x_{0,2}(t)>0$ and $x_{0,2}(t)=0$ separately.
Suppose that $x_{0,2}(t)>0$. Then, $h_i(x(t))=\lambda\I{i=0}$ (by (6)) and substituting in \eqref{asckj8ff} we immediately get
$ \dot{\overline{Q}}(x(t))
%
%
%
%
=  \lambda - y_{1}(t) $ as desired.
Thus, suppose in the remainder that $x_{0,2}(t)=0$.
Now, assume that $y_0(t)>0$. Then, using again (6),
\begin{align*}
\dot{\overline{Q}}(x(t))
%
%
 = &- y_1 + \left(\beta {x}_{0,1}+{x}_{1,2} \right)\I{{x}_{1,2} + \beta {x}_{0,1}\le \lambda}
\\ & +  \frac{ y_1}{y_0}(\lambda - {x}_{1,2} - \beta {x}_{0,1}  )^+ \\
= & - y_1 + \left(\beta {x}_{0,1}+{x}_{1,2} \right)\I{{x}_{1,2} + \beta {x}_{0,1}\le \lambda} \\
&  +  (\lambda - {x}_{1,2} - \beta {x}_{0,1}  )^+
\end{align*}
and the statement follows immediately if ${x}_{1,2}(t) + \beta {x}_{0,1}(t)\le \lambda$.
On the other hand, if ${x}_{1,2}(t) + \beta {x}_{0,1}(t)> \lambda$,
then, since $x_{0,2}(t)=0$ and $t$ is supposed to be a point of differentiability, we get (by~(4c)) the contradiction that
$0 = \dot{x}_{0,2}(t)
= x_{1,2}(t) - h_0(x(t)) + \beta x_{0,1}(t) - \gamma x_{0,2}(t)
= x_{1,2}(t)  + \beta x_{0,1}(t) >\lambda
$; the first equality holds because $x_{0,2}(t) $ is a non-negative absolutely continuous function.
This shows that $t$ cannot be a point of differentiability.
Finally, if $y_0(t)=0$, then the differentiability at $t$ and the normalizing condition $\|x\|=1$ give $0 = \dot{y}_0(t) = -\dot{x}_{0,0}(t)-\dot{x}_{0.1}(t) = \beta x_{0,1}$ and thus $x_{0,0}(t)=1$.
Assumption~2 requires that $g(x)>0$ when $x_{0,0}=1$, so
(4a) implies that $\dot{{x}}_{0,0} <0$. This contradicts that $t$ is a point of differentiability because $x_{0,0}(t)$ is uniformly bounded by one and absolutely continuous.
\end{proof}

We now prove Theorem~3 by showing that $\|x(t)-x^\star \|\to0$ in each of the following complete and mutually exclusive cases.
For each case, we show that $x(t)$ follows a unique trajectory that stays in $\mathcal{S}_1$.

\ \\
\emph{Case i)}.
Suppose that $x_{0,2}(t)=0 $ for all $t\ge0$.
This rules out the possibility that $x_{0,0}(t)$  stays on zero for all $t$ large enough because (4a) and (4b), together with the normalizing condition $\|x\|=1$, would imply that $y_1(t)\to 1$ as $t\to\infty$, and in this case Lemma~\ref{lemma:Qdot} yields the contradiction that $\overline{Q}(x(t))$ is eventually negative.
Thus, without loss of generality, let us assume that $x_{0,0}(0)>0$.
Then, using~(4), $x(t)$ satisfies
\begin{subequations}
\label{gx_proof}
\begin{align}
\label{gx_1}
\dot{{x}}_{0,0} & = - \alpha g(x)\\
\label{gx_2}
\dot{{x}}_{0,1} & = \alpha  g(x)  -\beta{x}_{0,1}\\
\label{gx_3}
\dot{x}_{0,2} &= 0,\quad x_{1,2} + \beta x_{0,1} \le \lambda.
%
%
\end{align}
\end{subequations}
Note that $\lim_{t\to\infty} x_{0,0}(t)$ exists, say $x_{0,0}(\infty)$, because $\dot{x}_{0,0}(t)\le 0$ and  $x_{0,0}(t)$ is uniformly bounded.
Thus, as $t\to\infty$,
$\dot{x}_{0,0}(t)
=- \alpha g(x(t))
\to 0$.
Given the assumptions on $g$, $(\lambda - x_{0,1}(t)-\beta x_{1,2}(t))^+\to 0$ and since $x_{1,2}(t) + \beta x_{0,1}(t) \le \lambda$ for all $t$, by \eqref{gx_3},
we obtain that $x_{1,2}(t)+\beta x_{0,1}(t) \to \lambda$.
Then, \eqref{gx_2} and $g(x(t))\to 0$ imply that $x_{0,1}(t)\to 0 $ and thus $x_{1,2}(t)\to \lambda$.
In turn, (4d) gives $x_{i,2}(t)\to 0 $ for all $i\ge 2$, and the normalizing condition $\|x\|=1$ implies that necessarily $x_{0,0}(t)\to 1-\lambda$. Thus, $\|x(t)-x^\star\|\to 0$.

\ \\
\emph{Case ii)}.
Suppose that $x_{0,2}(t)>0$ for all $t$. Then, $x(t)$ satisfies the following conditions (using Definition~1)
\begin{subequations}
\label{Xvba}
\begin{align}
\label{Xvba1}
\dot{{x}}_{0,0} & = \gamma {x}_{0,2} - \alpha g\I{{x}_{0,0}>0}
-  \gamma {x}_{0,2}\,\I{{x}_{0,0}=0,\, \gamma {x}_{0,2} \le \alpha g}\\
\label{Xvba2}
\dot{{x}}_{0,1} & = \alpha  g \I{{x}_{0,0}>0} -\beta{x}_{0,1}
+\gamma {x}_{0,2}\,\I{{x}_{0,0}=0,\, \gamma {x}_{0,2} \le \alpha g} \\
\label{Xvba3}
\dot{x}_{0,2} &= x_{1,2} - \lambda + \beta x_{0,1} - \gamma x_{0,2} \\
\label{Xvba4}
\dot{x}_{1,2} &= x_{2,2}-x_{1,2} + \lambda \\
\label{Xvba5}
\dot{x}_{i,2} &= x_{i+1,2}\I{i< B}-x_{i,2} ,\quad i\ge2.
\end{align}
\end{subequations}
%
%
The ODE system~\eqref{Xvba4}-\eqref{Xvba5} is an autonomous linear ODE system with constant coefficients and, developing the matrix-exponential general solution of such ODE system, for all $i\ge 1$ we obtain
\begin{align}
\label{asic9as}
x_{i,2}(t) = \lambda \I{i=1} + e^{-t} \sum_{k=i}^B \frac{t^{k-i}}{(k-i)!} (x_{k,2}(0) - \lambda \I{k=1})
\end{align}
and thus $x_{i,2}(t) \to \lambda \I{i=1}$ as $t\to\infty$.
In turn,
$\lim_{t\to\infty} (\lambda-x_{1,2}(t)-\beta x_{0,1}(t))^+ = \lim_{t\to\infty} (-\beta x_{0,1}(t))^+ = 0$
and therefore
$g(x(t))
\to 0$.
Since $g(x(t))\to 0$, $x_{0,1}(t)\to 0$ necessarily by \eqref{Xvba2}, and using this in \eqref{Xvba3} we obtain $x_{0,2}(t)\to 0$.
Since $\|x\|=1$, necessarily $x_{0,0}(t)\to 1 - \lambda$ and we have shown that $\|x(t)-x^\star\|\to 0$.

\ \\
\emph{Case iii)}.
If the conditions in cases \emph{i)} and \emph{ii)} are not met, then there exists
$t_0$, $t_1$, with $t_0\le t_1<\infty$, and $\delta>0$
such that
\begin{enumerate}
 \item
$x_{0,2}(t)=0$ for all $t\in[t_0,t_1]$
 \item
$x_{0,2}(t)>0$ and $\dot{x}_{0,2}(t)<0$ for all $t\in[t_0-\delta,t_0)$, and
 \item
$x_{0,2}(t)>0$ and $\dot{x}_{0,2}(t)>0$ for all $t\in(t_1,t_1+\delta]$.
\end{enumerate}
On $[t_0-\delta,t_0)$, $h_0(x(t))=\lambda$ (by~(6)) and using~(4c), we obtain
$\dot{x}_{0,2} (t) = x_{1,2}(t) - \lambda + \beta x_{0,1}(t) - \gamma x_{0,2}(t) <0$
and thus by continuity
\begin{align}
\nonumber
0\ge& \lim_{t\uparrow t_0} x_{1,2}(t) - \lambda + \beta x_{0,1}(t) - \gamma x_{0,2}(t)\\
\label{asc8ddssz}
=& x_{1,2}(t_0) - \lambda + \beta x_{0,1}(t_0).
\end{align}
Since $x_{0,2}(t_0)=0$ on $[t_0,t_1]$, (4c) implies that \eqref{asc8ddssz} holds as well on $[t_0,t_1]$.
On $(t_1,t_1+\delta]$, $h_0(x(t))=\lambda$ (by~(6)) and using again~(4c), we obtain
\begin{align*}
0 &< \dot{x}_{0,2} (t)
= x_{1,2}(t) - h_0(x(t)) + \beta x_{0,1}(t) - \gamma x_{0,2}(t)\\
%
&< x_{1,2}(t) - \lambda + \beta x_{0,1}(t)
\end{align*}
and therefore
$g(x(t))
=0$.
By continuity of fluid solutions, $x_{1,2}(t_1)+\beta x_{0,1}(t_1)=\lambda$.
In addition,
on $(t_1,t_1+\delta]$, $x(t)$ is uniquely defined by
\begin{subequations}
\label{Xvbav}
\begin{align}
\label{Xvba1v}
\dot{{x}}_{0,0} & = \gamma {x}_{0,2}\\
\label{Xvba2v}
\dot{{x}}_{0,1} & = -\beta{x}_{0,1} \\
\label{Xvba3v}
\dot{x}_{0,2} &= x_{1,2} - \lambda + \beta x_{0,1} - \gamma x_{0,2} \\
\label{Xvba4v}
\dot{x}_{1,2} &= x_{2,2}-x_{1,2} + \lambda \\
\label{Xvba5v}
\dot{x}_{i,2} &= x_{i+1,2}\I{i< B}-x_{i,2} ,\quad i\ge2,
\end{align}
\end{subequations}
and we also know that $\dot{x}_{0,2}(t)>0$.
As long as \emph{a)} $g(x(t))=0$
and \emph{b)} $x_{0,2}(t)>0$, on $[t_1,\infty)$ the fluid solution under investigation $x(t)$ is indeed uniquely given by the trajectory induced by \eqref{Xvbav} on $[t_1,\infty)$.
In the remainder, we show that both \emph{a)} and \emph{b)} hold true for all $t$.
This will conclude the proof because $x^\star$ is the unique fixed point of \eqref{Xvbav} and because \eqref{Xvbav} is a linear ODE system with constant coefficients.
For simplicity of notation, let us shift time and assume that $t_1=0$.
Now,
since $x_{0,1}(t)  = x_{0,1}(0)e^{-\beta t} $ (by~\eqref{Xvba2v})
and
since $x_{1,2}(t)$ takes the form given in~\eqref{asic9as}, substituting in
\eqref{Xvba3v} we obtain
\begin{align*}
\dot{x}_{0,2}(t)
%
&=
\beta x_{0,1}(0)e^{-\beta t} - \gamma x_{0,2}(t)
+ e^{-t} (x_{1,2}(0) - \lambda ) \\
& \qquad + e^{-t} \sum_{k=2}^B \frac{t^{k-1}}{(k-1)!} x_{k,2}(0)
\\
&=
\beta x_{0,1}(0)e^{-\beta t} - \gamma x_{0,2}(t)
-\beta x_{0,1}(0) e^{-t} \\
& \qquad + e^{-t} \sum_{k=2}^B \frac{t^{k-1}}{(k-1)!} x_{k,2}(0)
\\
&\ge
\beta x_{0,1}(0)(e^{-\beta t}-e^{-t}) - \gamma x_{0,2}(t).
\end{align*}
Thus,
$x_{0,2}(t)\ge z(t) $ where $z(t)$ is uniquely defined by
$\dot{z}(t)  = \beta x_{0,1}(0)(e^{-\beta t}-e^{-t}) - \gamma z(t)$
with $z(0)= x_{0,2}(0)$.
The solution of this differential equation is
\begin{align*}
z(t)
=
\beta x_{0,1}(0) e^{-\gamma t}
\left(
\frac{ 1 - e^{-t\left(\beta-\gamma\right)}}{\beta-\gamma}
-
\frac{1 - e^{-t\left(1-\gamma\right)}}{1-\gamma}
\right)
\end{align*}
and now we notice that $z(t)>0$ if $\beta>1$,  for all $t$.
This proves property \emph{b)}.
To prove property \emph{a)}, we use again \eqref{asic9as} and $x_{1,2}(t_1)+\beta x_{0,1}(t_1)=\lambda$ to obtain
\begin{multline}
\label{ldokivuh85}
x_{1,2}(t) + \beta x_{0,1}(t) - \lambda
=
\beta x_{0,1}(0)\left( e^{-\beta t}
-e^{-t} \right)\\
  + e^{-t} \sum_{k=2}^B \frac{t^{k-1}}{(k-1)!}   x_{k,2}(0)
%
%
>0,
\end{multline}
where the last inequality follows because $\beta<1$.
Given~(18), \eqref{ldokivuh85} implies $g(x(t))=0$.

\section{Proofs of technical lemmas}
\label{sec:appendix_technical_lemmas}

\subsection{Proof of Lemma~\ref{lemma_ascasji1}}

We give a proof for the first limit because the argument used for the others is identical.


Since $t_n\in (t,t+\epsilon]$ whenever $n \in \{ \mathcal{N}_\phi({N_k} t)+1, \ldots, \mathcal{N}_\phi({N_k} (t+\epsilon))\}$,
\eqref{eq:2_L_eps} implies that for all~$k$ sufficiently large
$|Y_{i}^{N_k}(t_n)-\sum_{j=0}^i \overline{x}_{j,2}(t)| \le C \epsilon$, for some constant $C$, i.e.,
\begin{multline}
\label{asjcas9}
\II{(\sum_{j=0}^i \overline{x}_{j,2}(t) + C\epsilon, \sum_{j=0}^i \overline{x}_{j,2}(t) - C\epsilon]}{D_n}
\le
\II{(Y_{i-1}^N(t_n^-),Y_i^N(t_n^-)]}{D_n} \\
\le
\II{(\sum_{j=0}^i \overline{x}_{j,2}(t) - C\epsilon,\sum_{j=0}^i \overline{x}_{j,2}(t) + C\epsilon]}{D_n}
\end{multline}
Let $\Gamma$ denote the LHS of the first equation in Lemma~\ref{lemma_ascasji1}.
Applying Lemma~\ref{GC_F}, we obtain
\begin{align*}
\Gamma
\le&  \lim_{\epsilon\downarrow 0}\lim_{k\to\infty}
\sum_{{\scriptstyle n=\mathcal{N}_\phi(N_k t)+1:}\atop{\scriptstyle W_n=1}}^{\mathcal{N}_\phi(N_k (t+\epsilon))}
 \frac{\II{(\sum_{j=0}^i \overline{x}_{j,2}(t) - C\epsilon,\sum_{j=0}^i \overline{x}_{j,2}(t) + C\epsilon]}{D_n}}{\epsilon N_k}
\\
=& \overline{x}_{i,2}(t)
\end{align*}
and using \eqref{asjcas9} in the other direction we obtain $\Gamma=\overline{x}_{i,2}(t)$ as desired.

\subsection{Proof of Lemma~\ref{lemma_ascasji2}}

We recall that we have analyzed $\overline{x}$ along a fixed $\omega \in \mathcal{C}$, where $\pp(\mathcal{C})=1$. We now explicit the dependence on $\omega$ and treat quantities $\overline{x}(t)$ and $X^N(t)$ as random variables.
Let
\begin{align}
Z_n^N \bydef \II{(Y_{i-1}^{N}(t_n^-),Y_{i}^{N}(t_n^-)]}{A_n^1 (1-X_{0,0}^{N}(t_n^-)-X_{0,1}^{N_k}(t_n^-) )}
\I{\sum_{j=0}^d X_{j,2}^{N}(t_n^-)>0}.
\end{align}
For all $n$, the random variable
$Z_n^N$
is  $\mathcal{F}_n$-measurable where
$\mathcal{F}_n \bydef \{X^{N}(t_n^{N,\lambda-}), A_n^1,W_n\}$, and
\begin{align}
\label{asjc7h6ftc}
\mathbb{E}[Z_{n}^N| \mathcal{F}_n\setminus A_n^1]
=
\frac{X_{i,2}^{N}(t_n^-)}{1 - X_{0,0}^{N}(t_n^-) - X_{0,1}^{N}(t_n^-)}
\I{\sum_{j=0}^d X_{j,2}^{N}(t_n^-)>0}
\end{align}
where the set $\mathcal{F}_n\setminus W_n$ denotes the set $\mathcal{F}_n$ with $A_n^1$ removed.
Now, let
$\Delta_n^N\bydef Z_n^N - \mathbb{E}[Z_n^N| \mathcal{F}_n\setminus W_n]$.
Then, $\mathbb{E}[\Delta_n^N|\mathcal{F}_n\setminus W_n]=0$ and $|\Delta_n^N|\le 2$, and applying the Azuma--Hoeffding inequality, we get
\begin{equation}
\mathbb{P} \left( \frac{1}{N} \left|\sum_{n=1}^N \Delta_n^N  \right| >\delta \right) \le 2 \exp\left( -\frac{(N\delta)^2}{ 8 N} \right)
\end{equation}
for any $\delta>0$.
Since $\sum_N \exp\left( -{N \delta^2/ 8} \right)<\infty$, an application of the Borel--Cantelli lemma shows that
$\frac{1}{N} \sum_{n=1}^N \Delta_n^N \to 0$ almost surely.
In particular,
\begin{multline}
\label{vjuygedffheryf6723ws}
\lim_{N\to\infty} \frac{1}{\epsilon N}
\sum_{{\scriptstyle n=\mathcal{N}_\phi(N t) +1:}\atop{\scriptstyle W_n=0}}^{\mathcal{N}_\phi(N (t+\epsilon)) }
\II{(Y_{i-1}^{N}(t_n^-),Y_{i}^{N}(t_n^-)]}{A_n^1 (1-X_{0,0}^{N}(t_n^-)-X_{0,1}^{N}(t_n^-) )}
\\
\times \left( \I{\sum_{j=0}^d X_{j,2}^{N}(t_n^-)>0}
-
\mathbb{E}[Z_n^N| \mathcal{F}_n\setminus W_n]
\right)
=0
\end{multline}
almost surely.
We now come back to work on a given trajectory $\omega$.
In view of the previous equality,
we may redefine $\mathcal{C}$ in Lemma~\ref{GC_F} to be a subset of
$\mathcal{C}'$ where $\pp(\mathcal{C}'=1)$
and \eqref{vjuygedffheryf6723ws} holds for all $\omega \in\mathcal{C}'$.
Therefore, we fix $\omega\in\mathcal{C}$ and use \eqref{eq:2_L_eps} and \eqref{asjc7h6ftc} to obtain that
\begin{align*}
& \lim_{k\to\infty} \frac{1}{\epsilon N_k}
\sum_{{\scriptstyle n=\mathcal{N}_\phi(N_k t) +1:}\atop{\scriptstyle W_n=0}}^{\mathcal{N}_\phi(N_k (t+\epsilon)) }
\mathbb{E}[Z_n^{N_k}| \mathcal{F}_n\setminus W_n]
\I{\sum_{j=0}^d X_{j,2}^{N_k}(t_n^-)>0}\\
& \le
\lim_{k\to\infty} \frac{1}{\epsilon N_k}
\sum_{{\scriptstyle n=\mathcal{N}_\phi(N_k t) +1:}\atop{\scriptstyle W_n=0}}^{\mathcal{N}_\phi(N_k (t+\epsilon)) }
\frac{\overline{x}_{i,2}(t)+\delta}{1-\overline{x}_{0,0}(t)-\overline{x}_{0,1}(t)-\delta} \\
& \qquad \times \I{\sum_{j=0}^d X_{j,2}^{N_k}(t_n^-)>0}
\end{align*}
for any $\delta>0$ sufficiently small.
Replacing $\delta$ by $-\delta$ in the last fraction term,
the previous inequality can be reversed and letting $\delta\downarrow 0$,
we obtain
\begin{footnotesize}
\begin{multline}
\label{asjc8as9}
\lim_{k\to\infty} \frac{1}{\epsilon N_k}
\sum_{{\scriptstyle n=\mathcal{N}_\phi(N_k t) +1:}\atop{\scriptstyle W_n=0}}^{\mathcal{N}_\phi(N_k (t+\epsilon)) }
\mathbb{E}[Z_n^{N_k}| \mathcal{F}_n\setminus W_n]
\I{\sum_{j=0}^d X_{j,2}^{N_k}(t_n^-)>0}\\
=
\frac{\overline{x}_{i,2}(t)}{1-\overline{x}_{0,0}(t)-\overline{x}_{0,1}(t)}
\lim_{k\to\infty}
\sum_{{\scriptstyle n=\mathcal{N}_\phi(N_k t) +1:}\atop{\scriptstyle W_n=0}}^{\mathcal{N}_\phi(N_k (t+\epsilon)) }
 \frac{\I{\sum_{j=0}^d X_{j,2}^{N_k}(t_n^-)>0}}{\epsilon N_k}
\end{multline}
\end{footnotesize}

Finally, \eqref{asjc8as9} and \eqref{vjuygedffheryf6723ws} give \eqref{asjc78dcckiujff}.

\subsection{Proof of Lemma~\ref{asjch7rrr}}
Let $w_d\bydef \sum_{j=0}^d x_{j,2}$.
For now, let us assume that $x_{0,2}>0$.
In this case, $w_d>0$ and \eqref{standard_FP} boils down to (by~(6))
\begin{subequations}
\label{standard_FP_2}
\begin{align}
x_{1,2} &= \tfrac{ \lambda}{w_d} x_{0,2}\\
x_{i+1,2} &= x_{i,2} + \tfrac{\lambda }{w_d}(x_{i,2}-x_{i-1,2}),\quad i=1,\ldots,d\\
x_{d+2,2} &= x_{d+1,2} - \tfrac{\lambda }{w_d} x_{d,2}\\
x_{i+1,2} &= x_{i,2},\qquad\qquad\qquad\qquad\quad\,\,\,  i\ge d+2.
\end{align}
\end{subequations}
Since $\|x\|=1$, \eqref{standard_FP_2} holds if and only if  $x_{i,2}=0$ for all $i\ge d+2$ and
\begin{align}
\label{ascscs}
x_{i,2} = \left( \tfrac{ \lambda}{w_d}\right)^i x_{0,2},\qquad   i = 0,\ldots,d+1.
%
%
%
\end{align}
If $d=0$, then $w_d=x_{0,2}$ and $x_{1,2}=\lambda$, and the lemma is proven.
Thus, let $d\ge 1$.
Summing \eqref{ascscs} over $i=0,\ldots,d$, we obtain
\begin{align}
w_d = \frac{1-\left( \frac{ \lambda}{w_d}\right)^{d+1}}{1-\frac{ \lambda}{w_d} }   x_{0,2}
%
\end{align}
and letting $z_d\bydef\sum_{j=1}^{d} x_{j,2}$ we obtain (10) as desired and it remains to prove (7a). Using \eqref{ascscs},
we notice that (10) holds if and only if
\begin{align}
z_d+x_{0,2} = \frac{x_{0,2}-x_{d+1,2}}{1-\frac{ \lambda}{z_d+x_{0,2}} }
\end{align}
and rearranging terms we obtain $z_d+x_{0,2}- \lambda = x_{0,2}-x_{d+1,2} $.
Then, (7a) follows by using the normalizing condition $\|x\|=1$ as $x_i=0$ for all $i\ge d+2$.

It remains to consider the case where $x_{0,2}=0$. Here, $x_{0,2}=0$ if and only if $x_{0,1}=0$, by \eqref{ajvdfu8bf}, which implies
\begin{align}
\label{sc8as9sd}
g\I{{x}_{0,0}>0}=0,
\end{align}
by \eqref{x01_FP}.
In addition, if $w_d>0$, then \eqref{standard_FP} boils down again to \eqref{standard_FP_2} and $x_{0,2}=0$ would imply that $x_{i,2}=0$ for all $i$. This is not possible in view of $\|x\|=1$ and, therefore, we must have $w_d=0$.
Since necessarily $x_{0,0}<1$,
(6) simplifies to
\begin{align}
h_i (x) =
\left\{
\begin{array}{ll}
%
{x}_{d+1,2} \I{i=d} \I{{x}_{d+1,2} \le \lambda}
& {\rm if }\,\, i\le d,\, \\
 \frac{{x}_{i,2}}{1-{x}_{0,0}} (\lambda - {x}_{d+1,2}  )^+
& {\rm if }\,\, i> d
\end{array}
\right.
\end{align}
and substituting in
\eqref{sj8x} we get
\begin{subequations}
\label{standard_FP_2bb}
\begin{align}
x_{i,2} &= 0, \quad i\le d\\
\label{askcygg}
x_{d+1,2} &= {x}_{d+1,2} \I{{x}_{d+1,2} \le \lambda}\\
%
%
\label{askcyggC}
x_{i,2} & = \frac{{x}_{i-1,2}}{1-{x}_{0,0}} (\lambda - {x}_{d+1,2}  )^+, \quad i\ge d+2.
\end{align}
\end{subequations}
This gives~(9a).
Now, if ${x}_{d+1,2} > \lambda$, then \eqref{askcygg} is violated,
and if ${x}_{d+1,2}=0$, then \eqref{askcyggC} and $\|x\|=1$ give the contradiction that $1=\lambda$.
So, necessarily ${x}_{d+1,2} \in(0, \lambda]$, i.e., (9c).
Here, we notice that ${x}_{d+1,2}$ is not tied to a specific value.
Then, summing \eqref{askcyggC} we obtain
\begin{align}
\sum_{i\ge d+2} x_{i,2} & = \sum_{i\ge d+2}\frac{{x}_{i-1,2}}{1-{x}_{0,0}} (\lambda - {x}_{d+1,2}  ),
\end{align}
which, using $\|x\|=1$ and \eqref{standard_FP_2bb}, holds if and only if
\begin{align}
1 -x_{d+1,2}-x_{0,0} & = \frac{\lambda - {x}_{d+1,2} }{1-{x}_{0,0}} (1 -x_{0,0})
\end{align}
i.e., if and only if $x_{0,0}=1-\lambda$;
note that $x_{0,0}=0$ is not possible as otherwise  \eqref{askcyggC} and $\|x\|=1$ give the contradiction that $1<\lambda$.
Since $x_{0,0}>0$, necessarily $g=0$ by \eqref{sc8as9sd}, which gives~(9d).
Using ${x}_{d+1,2} \le \lambda$ and $x_{0,0}=1-\lambda$ in~\eqref{askcyggC}, we obtain
$x_{d+2,2}  = x_{d+1,2}  \left(1 - {{x}_{d+1,2}/\lambda}  \right) $
%
%
and applying inductively~\eqref{askcyggC}, we obtain~(9b).
This concludes the proof.

\section{Proof of Proposition~1}
\label{sec:proof_prop}

The fact that $x^\star$ is a fixed point is trivial.
Suppose that there exists $\delta>0$ such that $x_{0,2}(t)>0$ on $(0,\delta]$.
Then, there exists $\delta'>0$ such that $\dot{x}_{0,2}(t)>0$ on $(0,\delta']$.
Using~(4c), which gives
$\dot{x}_{0,2}
= x_{1,2} - \lambda + \beta x_{0,1} - \gamma x_{0,2},
$
we obtain
\begin{align}
x_{1,2}(t) + \beta x_{0,1}(t) > \lambda + \gamma x_{0,2}(t),\quad \forall t\in(0,\delta']
\end{align}
and thus $x_{1,2}(0) + \beta x_{0,1}(0) = \lim_{t\downarrow 0 } x_{1,2}(t) + \beta x_{0,1}(t) \ge \lambda$, by continuity of the fluid model.
This contradicts the last condition in (15) and thus $x_{0,2}(t)=0$ on a right neighborhood of zero, say $[0,\delta]$.
Since $g(x)
=\lambda - 1+x_{0,0}
$,
on $[0,\delta]$ we obtain (using~(4))
\begin{subequations}
\label{gx_proof_prop1}
\begin{align}
\label{gx_1_prop1}
\dot{{x}}_{0,0} & = - \alpha (\lambda - 1+x_{0,0})\\
\label{gx_2_prop1}
\dot{{x}}_{0,1} & = \alpha  (\lambda - 1+x_{0,0})  -\beta{x}_{0,1}\\
\label{gx_3_prop1}
\dot{x}_{0,2} &= 0 \\
\label{gx_4_prop1}
\dot{x}_{i,2} &= x_{i+1,2}-x_{i,2} + h_{i-1} (x)- h_i(x),\quad i\ge 1
\end{align}
\end{subequations}
where
\begin{align}
h_i (x) =
\left\{
\begin{array}{ll}
\beta {x}_{0,1}+{x}_{1,2}
& {\rm if } \,\, i=0,\\
 \frac{{x}_{i,2}}{y_1} (\lambda - {x}_{1,2} - \beta {x}_{0,1}  )^+
& {\rm if } \,\, i>0.\\
%
%
%
\end{array}
\right.
\end{align}
We observe that~\eqref{gx_1_prop1}-\eqref{gx_2_prop1} form an autonomous linear ODE system.
By continuity of $x(t)$, (15) holds as well on a right neighborhood of zero.
Now, we actually show that (15) holds on $[0,\infty)$, i.e., $\delta=+\infty$.
Towards this purpose,
let us analyze the system~\eqref{gx_1_prop1}-\eqref{gx_2_prop1} in isolation.
After some algebra, we obtain
\begin{subequations}
\label{asc90as}
\begin{align}
%
x_{0,0}(t) & = 1-\lambda + (x_{0,0} -1 + \lambda) e^{-\alpha t}\\
x_{0,1}(t) & =
\frac{\alpha (x_{0,0} -1 + \lambda)}{\beta-\alpha}( e^{-\alpha t} - e^{-\beta t} )
+ x_{0,1}(0) e^{-\beta t} .
\end{align}
\end{subequations}
Thus,
\begin{itemize}
\item[i)] $x_{0,0}(t)$ monotonically decreases to zero  as $t\to\infty$, and

\item[ii)] $y_0(t)=y_1(t)<1$ with both $y_0(t)$ and $y_1(t)$ monotonically increasing to $\lambda$ because $\dot{x}_{0,0} +\dot{x}_{0,1}$ is always non-increasing and $x_{0,2}$ stays on zero.
\end{itemize}
To prove that (15) holds on $[0,\infty)$, it remains to show that
$x_{1,2}(t)+\beta x_{0,1}(t)< \lambda$ for all $t\ge 0$.
This property is true because
$
x_{1,2} + \beta x_{0,1}
\le y_{1} +  x_{0,1}
= 1-x_{0,0}
= \lambda - (x_{0,0}(0)-1+\lambda) e^{-\alpha t}< \lambda
%
$.
Thus, $x(t)$ satisfies \eqref{gx_proof_prop1}
on $[0,\infty)$.
In addition, since $x_{0,0}(0)+x_{0,1}(0)<1$ and $\dot{x}_{0,0}(t)+\dot{x}_{0,1}(t)=-\beta x_{0,1}(t)\le 0$ for all $t$, the drift function of \eqref{gx_proof_prop1} is Lipschitz and therefore it induces a unique flow [11,page~56].
Since $x_{0,0}(t)\downarrow 1-\lambda$ as $t\to\infty$, for all $t\ge 0$
\begin{multline}
\label{ascascssss}
\dot{\overline{Q}}(x(t)) =   \lambda - y_1(t) =   x_{0,0}(t)+x_{0,1}(t) + \lambda - 1
\\
\ge   x_{0,0}(t)  + \lambda - 1 >0,
\end{multline}
where the first equality follows by Lemma~\ref{lemma:Qdot}.
In particular, $\lim_{t\to\infty} Q(x(t))$ exists and must be greater than $\lambda$ because $\lambda<\overline{Q}(x(0))<\infty$.
Combining \eqref{asc90as} and~\eqref{ascascssss}, we obtain
\begin{align*}
 \dot{\overline{Q}}(x(t))
& =
 (x_{0,0}(0)-1+\lambda) e^{-\alpha t}
 + x_{0,1}(0) e^{-\beta t}  \\
 &\quad + \frac{\alpha (x_{0,0}(0)-1+\lambda)}{\beta-\alpha}( e^{-\alpha t} - e^{-\beta t} )\\
%
& =
\underbrace{\frac{\beta(x_{0,0}(0)-1+\lambda)}{\beta-\alpha}}_{\bydef C_1} e^{-\alpha t} \\
&\quad
+ \underbrace{\left(x_{0,1}(0) - \frac{\alpha (x_{0,0}(0)-1+\lambda)}{\beta-\alpha}\right)}_{\bydef C_2} e^{-\beta t} .
\end{align*}
Integrating,
\begin{align*}
\overline{Q}(x(t)) =  & \, \overline{Q}(x(0))
+  \frac{C_1}{\alpha} (1-e^{-\alpha t})
+  \frac{C_2}{\beta} (1-e^{-\beta t}) \\
 \xrightarrow[t\to\infty]{} & \,
\overline{Q}(x(0))
+  \frac{\alpha+\beta}{\alpha\beta} (x_{0,0}(0)-1+\lambda)
+  \frac{1}{\beta}x_{0,1}(0)
%
\end{align*}
which proves~(17).
Finally, suppose that $\lim_{t\to\infty} x_{1,2}(t)$ exists, say $x_{1,2}(\infty)$.
Then, necessarily $x_{1,2}(\infty)<\lambda$ because
$y_1(t)\to\lambda$ and
$\lim_{t\to\infty} Q(x(t))>\lambda$ excludes that $x_{1,2}(t)\to\lambda$. Then, using \eqref{gx_4_prop1} when $i=1$ and that $x_{1,2}(t)$ is Lipschitz continuous,
\begin{align*}
0 & =\lim_{t\to\infty} \dot{x}_{1,2}(t) \\
&= \lim_{t\to\infty} x_{2,2} + \beta x_{0,1} - \frac{{x}_{1,2}}{1-{x}_{0,0}-{x}_{0,1}} (\lambda - {x}_{1,2} - \beta {x}_{0,1}  )\\
%
& = \lim_{t\to\infty} \left( x_{2,2} - \frac{{x}_{1,2}}{\lambda} (\lambda - {x}_{1,2}  ) \right) \\
& = -x_{1,2}(\infty)  \left(1 - \frac{{x}_{1,2}(\infty)}{\lambda} \right ) + \lim_{t\to\infty} x_{2,2},
\end{align*}
which shows that $\lim_{t\to\infty} x_{2,2}$ must exists as well and be equal to $x_{1,2}(\infty)  \left(1 - \frac{{x}_{1,2}(\infty)}{\lambda} \right )$.
By induction, $\lim_{t\to\infty} x_{i,2}$ exists and is equal to $x_{i,2}(\infty)  \left(1 - \frac{{x}_{1,2}(\infty)}{\lambda} \right )^{i-1}$.
Thus, $x(\infty)\in \SO$.


%

\review{
\section{Additional material supporting numerical simulations}

Table~\ref{tab:my-table} reports the numerical values of $\mathcal{R}_{{\rm Wait}}$ and $\mathcal{R}_{{\rm Energy}}$ plotted in Figure 2.
\begin{table}[h]
\begin{center}
 \begin{tabular}{lccc}
\multicolumn{4}{c}{$\lambda=0.35$}                                                                                                                  \\
\multicolumn{1}{c}{} & $d=1$                 & $d=5$                 & $d=10$                \\ \cline{2-4}
\multicolumn{1}{l|}{$N=100$}                                                & \multicolumn{1}{c|}{0.01786,\,1.00296} & \multicolumn{1}{c|}{0.00140,\,1.03773} & \multicolumn{1}{c|}{0.001046,\,1.00762} \\ \cline{2-4}
\multicolumn{1}{l|}{$N=500$}                                                & \multicolumn{1}{c|}{0.00674,\,1.00271} & \multicolumn{1}{c|}{0.00031,\,1.01113} & \multicolumn{1}{c|}{0.000013,\,1.00956} \\ \cline{2-4}
\multicolumn{1}{l|}{$N=1000$}                                               & \multicolumn{1}{c|}{0.00400,\,1.00387} & \multicolumn{1}{c|}{0.00022,\,1.00492} & \multicolumn{1}{c|}{0.000007,\,1.00554} \\ \cline{2-4}
\\
\hline
\\
\multicolumn{4}{c}{$\lambda=0.7$}                                                                                                                  \\
\multicolumn{1}{c}{} & $d=1$                 & $d=5$                 & $d=10$                \\ \cline{2-4}
\multicolumn{1}{l|}{$N=100$}                                                & \multicolumn{1}{c|}{0.01414,\,1.00091} & \multicolumn{1}{c|}{0.01086,\,1.00127} & \multicolumn{1}{c|}{0.010230,\,1.00284} \\ \cline{2-4}
\multicolumn{1}{l|}{$N=500$}                                                & \multicolumn{1}{c|}{0.00250,\,1.00200} & \multicolumn{1}{c|}{0.00024,\,1.00081} & \multicolumn{1}{c|}{0.000158,\,1.00153} \\ \cline{2-4}
\multicolumn{1}{l|}{$N=1000$}                                               & \multicolumn{1}{c|}{0.00162,\,1.00234} & \multicolumn{1}{c|}{0.00011,\,1.00355} & \multicolumn{1}{c|}{0.000025,\,1.00285} \\ \cline{2-4}
\end{tabular}
\end{center}
\caption{Numerical values of $(\mathcal{R}_{{\rm Wait}},\mathcal{R}_{{\rm Energy}})$ in Figure 2.}
\label{tab:my-table}
\end{table}
}

\end{document}